\newcommand{\cross}{\times}
\newcommand{\bbZ}{\mathbb{Z}}
\newcommand{\bbQ}{\mathbb{Q}}
\newcommand{\bbR}{\mathbb{R}}
\newcommand{\bbC}{\mathbb{C}}
\newcommand{\M}{\mathscr{M}}
\newcommand{\sO}{\mathscr{O}}
\newcommand{\sL}{\mathscr{L}}
\newcommand{\sN}{\mathscr{N}}
\newcommand{\sA}{\mathscr{A}}
\newcommand{\sD}{\mathscr{D}}
\newcommand{\sI}{\mathscr{I}}
\newcommand{\isom}{\cong}
\newcommand{\tensor}{\otimes}
\newcommand{\fg}{\mathfrak{g}}
\newcommand{\fsl}{\mathfrak{sl}}
\newcommand{\fk}{\mathfrak{k}}
\newcommand{\oZ}{\tilde{Z}}
\DeclareMathOperator{\aut}{Aut}
\DeclareMathOperator{\End}{End}
\DeclareMathOperator{\gl}{GL}
\DeclareMathOperator{\Res}{Res}
\DeclareMathOperator{\Lie}{Lie}
\DeclareMathOperator{\Pic}{Pic}
\DeclareMathOperator{\id}{id}
\DeclareMathOperator{\p}{\partial}
\DeclareMathOperator{\cl}{Cl}
\DeclareMathOperator{\lie}{Lie}
\DeclareMathOperator{\sing}{Sing}
\theoremstyle{plain}
\newtheorem{thm}{Theorem}[section]
\newtheorem{prop}[thm]{Proposition}
\newtheorem{cor}[thm]{Corollary}
\newtheorem{lem}[thm]{Lemma}
\theoremstyle{definition} 
\newtheorem{defn}[thm]{Definition}
\theoremstyle{remark}
\newtheorem{remark}[thm]{Remark}
\newtheorem{example}[thm]{Example}
\author[T.-J.~Lee]{Tsung-Ju~Lee}
\address{T.-J.~Lee: Department of Mathematics,
National Taiwan University, Taipei 10617, Taiwan}
\email{f97221051@ntu.edu.tw}
\author[H.-W.~Lin]{Hui-Wen~Lin}
\address{H.-W.~Lin: Department of Mathematics and Taida
Institute for Mathematical Sciences (TIMS),
National Taiwan University, Taipei 10617, Taiwan}
\email{linhw@math.ntu.edu.tw}
\title{Tautological systems under the conifold transition on $G(2, 4)$}
\begin{document}
\maketitle
\begin{abstract}
Via a natural degeneration of Grassmannian manifolds $G(k,n)$ to Gorenstein toric Fano varieties $P(k,n)$ with conifold singularities, we suggest an approach to study the relation between the tautological system on $G(k,n)$ and the extended GKZ system on the small resolution $\hat{P}(k,n)$ of $P(k,n)$. We carry out the simplest case $(k,n)=(2,4)$ to ensure its validity and show that the extended GKZ system can be regarded as a tautological system on $\hat{P}(2,4)$.  

\end{abstract}

\setcounter{section}{-1}

\section{Introduction} 
\subsection{Motivation}
Mirror symmetry from physics makes numerous beautiful predictions in algebraic geometry, not only on the enumerative geometry (curve counting theory) but also some intrinsic structures of certain moduli spaces.  For a Calabi-Yau manifold $X$, the theory connects symplectic geometry ($A$ model) and complex geometry ($B$ model) of its mirror. More precisely, given a Calabi-Yau  $X$, there exists another Calabi-Yau $X'$ such that $A(X)\cong B(X')$ and $B(X)\cong A(X')$. Here, the $A$ and $B$ model theories are taken to be the genus 0 Gromov-Witten theory and the variations of Hodge structures.

Many examples are known. The first explicit mirror pairs were written down by Greene and Plesser \cite{GP1990}, the Fermat quintic. Batyrev generalized their construction to the case of Calabi-Yau hypersurfaces in toric varieties, which relies on the use of the ``reflexive polytope" \cite{B1994}. Later, Batyrev and Borisov gave a construction in the case of Calabi-Yau complete intersections in toric varieties  using ``nef-partitions" \cite{BB1994}.

M. Ried proposed a conjecture in 1987 that all connected components of moduli spaces of Calabi-Yau 3-folds can be connected via extremal transitions. An extremal transition is defined by the following diagram:
\[
\xymatrix{
& Y \ar[d]^\psi \\
& \overline{X} \ar@{~>}[r]^-i & \mathfrak{X} \supset \mathfrak{X}_t=X
}
\]
where $\psi$ is a crepant resolution and $i$ is a smoothing of singularities. Note that there is a ``topological surgery" from $Y$ to $X$ and their topological types may change. Among them the conifold transition is the most fundamental one, in which the singularities on $\overline{X}$ are only ODPs (=ordinary double points). In terms of a conifold transition $Y\mapsto X$, we got the chance to find the mirror of $X$ by means of the mirror of $Y$. 

Explicitly, Batyrev, Ciocan-Fontanine, Kim and Straten (\cite{BCKS1998, BCKS2000}) provided a way to study the mirror symmetry of Calabi-Yau complete intersection  3-folds in the Grassmannian manifolds $G(k,n)$ (or more generally, the partial flag manifolds). In their construction, $G(k,n)$ can be degenerated to a certain Gorenstein Fano toric variety $P(k,n)$ with mild singularities and then we take a specific small resolution $\hat{P}(k,n)\to P(k,n)$. Now, if we pick a Calabi-Yau complete intersection 3-fold $X$ in $G(k,n)$, then we get a complete intersection 3-fold $\overline{X}$ in $P(k,n)$ and its resolution $Y$ in $\hat{P}(k,n)$. We thus have a conifold transition
\[
\xymatrix{
& \hat{P}(k,n)\supset Y \ar[d]^\psi \\
& P(k,n)\supset \overline{X}\ar@{~>}[r]^-i & \mathfrak{X} \supset \mathfrak{X}_t=X\subset G(k,n).
}
\]
Since $\hat{P}(k,n)$ is semi Fano,  the mirror construction of Batyrev-Borisov can be applied to it. Let $Y'$ be the mirror of $Y$ and it is possible
to find an appropriate specialization
of $Y'$ to conifolds, that is, $Y'$ can be degenerated to a conifold $Y_0'$. When we take a small resolution $X'\to Y_0'$ and  $X'$ is conjecturally to be the mirror of $X$ and the corresponding diagram is the following:
\[
\xymatrix{
& X' \ar[d]\\
& Y_0' \ar@{~>}[r] & Y'.
}
\]

We are interested in the study of a mathematical proof for the mirror symmetry between $X$ and $X'$, i.e., $A(X)\cong B(X')$ and $B(X)\cong A(X')$. The Gromov-Witten theory has been well-established for smooth toric
varieties and has also been formulated for Grassmannian manifolds (\cite{B1997}). The statement $A(X)\cong B(X')$ is somehow easier and a version of it had been studied before in the literature. However the Gromov-Witten theory
on $X'$ (crepant resolution) is harder and the statement $B(X)\cong A(X')$ is completely new. Our strategy is to make a detailed study on $B(Y)$ together with the relationship between $B(X)$ and $B(Y)$. Since the toric mirror symmetry shows that $B(Y)\cong A(Y')$, we can achieve the goal by a further study on the relation between $A(Y')$ and $A(X')$. 

Recently, a closely related work has been done by Lee, Lin and Wang. 
\begin{thm}[\cite{LLW}] \label{thm1}
Let $X \nearrow Y$ be a projective conifold transition of Calabi--Yau threefolds such that $[X]$ is a nearby point of $[\bar{X}]$ in $\M_{\bar{X}}$. Then 
\begin{enumerate}
\item[(1)] $A(Y)$ can be reconstructed from a refined $A$ model of $X^{\circ} := X \setminus \bigcup_{i=1}^k S_i$ ``linked'' by the vanishing spheres $S_i$'s in $B(X)$.
\item[(2)] $B(X)$ can be reconstructed from a refined $B$ model of $Y^{\circ} := Y \setminus \bigcup_{i=1}^k C_i$ ``linked'' by the exceptional curves $C_i$'s in $A(Y)$.
\end{enumerate}
\end{thm}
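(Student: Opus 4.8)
The plan is to run the two reconstructions in parallel. In each case one peels off the \emph{universal local model} of the conifold transition --- the resolved conifold, i.e.\ the total space of $\sO(-1)^{\oplus2}\to\bP^1$, on the $Y$-side, and the deformed conifold $T^{\ast}S^3$ on the $X$-side --- from the \emph{global complement}, and then shows that the sole extra datum needed to reassemble the full model on $Y$ (resp.\ $X$) is exactly the indicated piece of $B(X)$ (resp.\ $A(Y)$). The geometric backbone is threefold: (i) the diffeomorphism $Y^{\circ}:=Y\setminus\bigcup_{i=1}^{k}C_i\;\isom\;X\setminus\bigcup_{i=1}^{k}S_i=:X^{\circ}$, both identified with $\bar X\setminus\sing\bar X$ (biholomorphically on the $Y$-side via $\psi$, diffeomorphically on the $X$-side via the smoothing) and both retracting onto $\bar X$ with small balls about the nodes deleted, with $k$ boundary pieces each $\isom S^2\times S^3$; (ii) the Clemens--Schmid and Mayer--Vietoris sequences computing $H^{\ast}(Y)$ and $H^{\ast}(X)$ from $H^{\ast}(X^{\circ})$ together with the classes $[C_i]\in H_2(Y)$ and $[S_i]\in H_3(X)$ --- in particular $H^{2}(X^{\circ})\cong H^{2}(Y)$, so $X^{\circ}$ already carries the Kähler parameters of $Y$; and (iii) the classical local computation underlying the transition, that the resolved conifold's genus-$0$ multiple-cover prepotential $\sum_{d\ge1}d^{-3}q^{d}$ reproduces, re-expanded at the transition point, the universal conifold prepotential $\mu^{2}\log\mu$ with $\mu$ the period of the holomorphic $3$-form over the vanishing $S^3$.

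\emph{Part (1).} Gromov--Witten invariants are deformation invariant, so $A(X)$ is insensitive to the choice of smoothing of $\bar X$; the content is therefore the discrepancy between $A(Y)$ and the enumerative geometry of the complement $X^{\circ}$. Decompose the genus-$0$ potential of $Y$ along $H_2(Y)\to H_2(\bar X)=H_2(X)$, whose kernel is spanned by the $[C_i]$: a stable map to $Y$ either avoids a neighbourhood of $\bigcup_i C_i$ --- such maps are accounted for by the Gromov--Witten theory of $X^{\circ}$, after matching the symplectic forms on $Y^{\circ}$ and $X^{\circ}$ away from the conifold points --- or it develops components inside the resolved conifold, contributing the Aspinwall--Morrison series $\sum_{d}d^{-3}q^{d[C_i]}$. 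I would define the ``refined $A$ model of $X^{\circ}$'' to be the relative Gromov--Witten theory of the pair $\bigl(X,\bigsqcup_i\nu(S_i)\bigr)$ --- equivalently an open Gromov--Witten theory of $X^{\circ}$ with boundary on the links $S^2\times S^3$ --- and then establish a symplectic-sum/degeneration formula across the $S^2\times S^3$ neck, in the spirit of Li--Ruan's analysis of symplectic conifold transitions, reassembling these relative invariants with the local resolved-conifold contributions to recover $\langle\cdots\rangle^{Y}_{0,n,\beta}$. The only input to the gluing besides the relative invariants themselves is the bookkeeping of how relative classes on the links map into $H_{\ast}(X)$ versus $H_{\ast}(Y)$, i.e.\ how $X^{\circ}$ is glued into $X$ along the $S_i$; that is cohomological, not enumerative, data --- which is precisely the sense in which $A(Y)$ is recovered from a refined $A$ model of $X^{\circ}$ ``linked by the $S_i$'s in $B(X)$.''

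\emph{Part (2).} Work on the smoothing family $\mathfrak X\to\Delta^{k}$ near $[\bar X]$, with vanishing spheres $S_i$ and Picard--Lefschetz monodromies $T_i(\gamma)=\gamma\pm\langle\gamma,[S_i]\rangle[S_i]$; since the $S_i$ are disjoint these commute, and each $N_i:=\log T_i=T_i-\id$ satisfies $N_i^{2}=0$. By Schmid's nilpotent orbit theorem, $B(X)$ on a polydisc about $[\bar X]$ is determined by (a) the limiting mixed Hodge structure on $H^{3}$, (b) the nilpotent orbit built from the $N_i$, and (c) the convergent higher-order corrections to the period map. For (a): identify the limiting MHS --- whose monodromy weight filtration places $\operatorname{span}\{[S_i]^{\dual}\}$ in the lowest graded piece $W_2$ --- with the mixed Hodge structure on $H^{3}(Y^{\circ})=H^{3}(\bar X\setminus\sing\bar X)$, using the Clemens--Schmid sequence and the decomposition of $R\psi_{\ast}\bQ$ for the small resolution; this MHS is the ``refined $B$ model of $Y^{\circ}$,'' and step (b) is pure topology. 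The crux is (c): the leading asymptotics of the vanishing periods $\int_{S_i}\Omega_{t}$ and of the Yukawa couplings along the $t_i$-directions. By (iii) these reproduce the universal conifold prepotential $\mu^{2}\log\mu$, which coincides with the genus-$0$ multiple-cover potential $\sum_{d}d^{-3}q^{d[C_i]}$ of the exceptional curve $C_i\subset Y$ --- an $A(Y)$ datum. Hence $B(X)$ near $[\bar X]$ is assembled from the global mixed Hodge structure of $Y^{\circ}$ and the local $A(Y)$-contributions of the $C_i$, as asserted.

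\emph{The main obstacle}, in both halves, is the gluing step together with the verification that no cross-terms are left unaccounted for. On the $A$-side one must set up and prove a degeneration/symplectic-sum formula across a neck that is \emph{not} the zero fibre of an algebraic degeneration, and check --- using $H_1(S^2\times S^3)=0$ and $H_2(S^2\times S^3)=\bZ$ --- that only the trivial neck combinatorics contribute, ruling out reducible or non-effective configurations straddling the neck. On the $B$-side one must show that the nonlinear corrections to the period map factor as a product of global $Y^{\circ}$ data with the local resolved-conifold series, with no interaction --- a relative/open version of the local conifold computation glued to the global variation --- and that the resulting series converge on an honest polydisc, not merely formally. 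A subsidiary but necessary task is to make ``refined $A$ model of $X^{\circ}$'' and ``refined $B$ model of $Y^{\circ}$'' into objects for which the two reconstructions are even well-posed: I would take the former to be the relative Gromov--Witten theory above and the latter the variation of mixed Hodge structure on $H^{3}(X^{\circ})=H^{3}(Y^{\circ})$ with its limiting data, and then check that the Clemens--Schmid and nilpotent-orbit machinery stays compatible with these enrichments.
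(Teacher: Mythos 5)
The paper does not prove this statement: Theorem \ref{thm1} is quoted from \cite{LLW} as motivation, and the authors explicitly remark that ``the proof given in \cite{LLW} is non-constructive.'' There is therefore no in-paper argument to measure your attempt against, and I can only assess the proposal on its own terms.

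Your outline assembles the right ingredients --- the identification of $Y^{\circ}$ and $X^{\circ}$ through $\bar X\setminus\sing\bar X$ with $S^2\times S^3$ links, Clemens--Schmid, Picard--Lefschetz with commuting $N_i$ satisfying $N_i^2=0$, the nilpotent orbit theorem, and the Aspinwall--Morrison multiple-cover series --- but as written it is a plan whose two load-bearing steps are exactly the ones you defer to ``the main obstacle.'' On the $A$ side, you cannot simply invoke a symplectic-sum/degeneration formula across the $S^2\times S^3$ neck: this neck is not the central fibre of an algebraic degeneration, and the claim that stable maps in classes with nonzero image in $H_2(\bar X)$ localize away from the $C_i$ while the rest contribute only the local multiple-cover series is an assertion requiring proof, not a consequence of the splitting of $H_2(Y)$. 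On the $B$ side, the statement that the higher-order corrections to the period map factor into global $Y^{\circ}$ data times local conifold series ``with no interaction'' is precisely the content of part (2) and is nowhere derived. A further structural omission: for $k$ nodes the classes $[C_i]\in H_2(Y)$ and $[S_i]\in H_3(X)$ generically satisfy nontrivial integral relations, and the exact sequences encoding these relation matrices are the actual mechanism by which $B(X)$-data determines $A(Y)$ and vice versa; your sketch treats the $C_i$ and $S_i$ as if they were independent, so the ``linking'' in the theorem's statement --- its essential point --- is never engaged. Until the two gluing/factorization claims are established and the relation data is incorporated, the proposal does not constitute a proof.
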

The proof given in \cite{LLW} is non-constructive. Nevertheless, inspired by (2), we devote ourselves to the study of $B(Y)$ and try to restore the difference between $B(Y)$ and $B(X)$ by some contribution from the exceptional curves $C_i$'s.

\subsection{Statements of main results}
$B$ models were studied for toric varieties by using GKZ systems (\cite{GKZ}) and for Grassmannian manifolds by using Tautological systems  developed by Lian, Song and Yau (\cite{LSY2013, LY2013}). 

Gel'fand, Kapranov and Zelevinski introduced the construction of GKZ systems to govern the period integrals of toric hypersurfaces.  The idea was to use a natural basis of $H^0(X,-K_X)$, which is indexed by the integral points of $\Delta_{-K_X}$ and the relations of the integral points to write down the GKZ type binomial differential operators that can govern the period integrals. Also, the torus action on $X$ gives us additional first order operators. These operators form a holonomic system. For homogeneous spaces $X$, the standard monomial theory for representations of a reductive group gives a natural way to write down bases of cohomology of line bundles on $X$ explicitly. Hence it is natural to expect that there exists a parallel approach to construct GKZ type systems on $X$. However, unfortunately, the $D$-modules one constructs this way are almost never holonomic. There would not be enough binomial differential operators to determine the period integrals€. Thus, from our point of view, the mechanism of tautological systems can be used to resolve this problem effectively.  

More precisely, let $G$ be a fixed connected algebraic group. For every $G$-variety $X$ equipped
with a very ample $G$-equivariant line bundle $L$,
Lian, Song and Yau attached a system of differential operators defined on $H^0(X, L)^*$, depending on a given group character and showed that the system is holonomic when $X$ has finitely many $G$-orbits. Using representation theory, a generating set of the system can be written down explicitly and the constructed holonomic system governs the period integrals of Calabi-Yau hypersurfaces in a partial flag variety. (More generally, also for Calabi-Yau complete intersections in a partial flag variety.) 

We observe that the parallel construction gives rise to a kind of generalized GKZ system for Calabi-Yau hypersurfaces in a toric variety $X$ when $G$ is taken to be $\aut(X)$, and show that it coincide with the extended GKZ system introduced in \cite{HKTY1995, HLY1996}. 

\emph{From now on, the notations $X$ and $Y$ are used to denote ambient spaces, instead of hypersurfaces or complete intersections in them.}

The main purpose of this project is to determine explicitly the relation between the tautological system on $G(k,n)$ and the extended GKZ system on $\hat{P}(k,n)$. In this paper we carry out the simplest yet highly non-trivial case of $k=2$, $n=4$. 

From the extended GKZ system on $\hat{P}(2, 4)$, we can single out its subsystem which corresponds to the tautological system on $G(2,4)$.
\begin{thm}[={\bf Theorem} \ref{main1}]
The tautological system on $X:=G(2,4)$ degenerates, as a $\sD$-module, to the variant tautological (sub-)system on $Y:=\hat{P}(2,4)$.
\end{thm}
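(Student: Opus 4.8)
The plan is to exhibit the passage from $X$ to $Y$ as a one-parameter flat degeneration of left ideals inside a fixed Weyl algebra $\sD$ on $V^{*}:=H^{0}(G(2,4),\sO(1))^{*}\cong\wedge^{2}\C^{4}=\spec\C[a_{ij}]_{1\le i<j\le 4}$, with general fibre the tautological system of $X$ and special fibre the variant tautological system of $Y$. First I would record explicit generators of $\tau(X)=\tau(\mathrm{SL}_{4},G(2,4),\sO(1),\beta)$: the box operator $\square_{P}=\partial_{12}\partial_{34}-\partial_{13}\partial_{24}+\partial_{14}\partial_{23}$ attached to the Plücker quadric (the single generator of the homogeneous ideal of the affine cone over $G(2,4)$), the fifteen first-order operators $Z(\xi)+\langle\beta,\xi\rangle$, $\xi\in\fsl_{4}$, coming from the $\mathrm{SL}_{4}$-action on $\wedge^{2}\C^{4}$, and the Euler operator $Z_{0}=\sum a_{ij}\partial_{ij}+c_{0}$. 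In parallel, I would take from the preceding section the explicit generators of the variant system $\hat\tau(Y)$: the $\sD$-module cut out by the toric binomial operators arising from the lattice relations of the affine cone over $\hat P(2,4)$ — among which is the degenerate Plücker binomial $\partial_{12}\partial_{34}-\partial_{13}\partial_{24}$ — together with the first-order operators for the torus acting on $\hat P(2,4)$ and the Euler operator.

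Next I would build the interpolating family from the BCKS degeneration, realized as the pencil of quadrics $P_{t}=p_{12}p_{34}-p_{13}p_{24}+t\,p_{14}p_{23}$ in $\P^{5}$. For $t\neq 0$ the one-parameter subgroup $g_{t}\in\mathrm{GL}(V)$ rescaling the $p_{14}$-coordinate carries $\{P_{t}=0\}$ isomorphically onto $G(2,4)=\{P_{1}=0\}$ and conjugates $\mathrm{SL}_{4}$ onto the subgroup of $\mathrm{GL}(V)$ preserving $\{P_{t}=0\}$, so that $\sI_{t}:=\mathrm{Ad}(g_{t})\,\sI_{1}$ — with $\sI_{1}$ the defining ideal of $\tau(X)$ — satisfies $\sD/\sI_{t}\cong\tau(X)$ for all $t\neq 0$ by the $\mathrm{GL}(V)$-equivariance of the tautological construction; at $t=0$ one has $\{P_{0}=0\}=P(2,4)$. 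I would then define $\sI_{0}$ to be the flat limit $\lim_{t\to 0}\sI_{t}$, equivalently the $t=0$ fibre of the associated Rees ideal in $\sD\otimes\C[t]$; this limit exists and is $\C[t]$-flat by the standard Rees-algebra argument for one-parameter degenerations, so $\sD/\sI_{0}$ is holonomic with the same characteristic degree as $\tau(X)$. The box and Euler operators are immediate: up to a nonzero scalar $\mathrm{Ad}(g_{t})\square_{P}=\partial_{12}\partial_{34}-\partial_{13}\partial_{24}+t\,\partial_{14}\partial_{23}$, whose limit is exactly the toric binomial $\partial_{12}\partial_{34}-\partial_{13}\partial_{24}$, while $Z_{0}$ is $\mathrm{Ad}(g_{t})$-invariant.

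The substance of the proof is the degeneration of the fifteen $\fsl_{4}$-operators. Because the maximal torus of $\fsl_{4}$ acts diagonally on the weight basis $\{a_{ij}\}$ and $g_{t}$ is diagonal in the same basis, the three Cartan operators are untouched by $\mathrm{Ad}(g_{t})$ and pass to three of the torus operators of $\hat P(2,4)$. The subalgebra $\mathrm{Ad}(g_{t})\bigl(\rho(\fsl_{4})\bigr)\subset\mathfrak{gl}(V)$ degenerates, as $t\to 0$, to a fifteen-dimensional contraction $\mathfrak{h}_{0}$ which lies in $\lie\,\Aut(P(2,4))$ because each $g_{t}\,\mathrm{SL}_{4}\,g_{t}^{-1}$ preserves $\{P_{t}=0\}$; the key point is that although an individual conjugated root operator may acquire a pole, suitable $t$-dependent combinations of the root operators have finite limits, and these limiting operators — computed from the explicit matrices of $\fsl_{4}$ on $\wedge^{2}\C^{4}$ — must be shown (i) to generate, together with the binomial, Cartan, and Euler operators, exactly the defining ideal $\mathscr{J}$ of $\hat\tau(Y)$, in particular reproducing the torus direction of $\hat P(2,4)$ that does not come from the undeformed $\fsl_{4}$-operators, and (ii) to add nothing more. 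I expect (i)--(ii) to be the main obstacle: one must control the full contraction $\mathfrak{h}_{0}$, not merely its torus, and check that the non-torus part of $\mathfrak{h}_{0}$ already lies in the left ideal generated by the toric binomials and the torus and Euler operators of $\hat P(2,4)$. This is a finite, fully explicit computation in the six-variable Weyl algebra with the auxiliary parameter $t$ — tractable precisely because $(k,n)=(2,4)$ — which I would carry out by a Gröbner-basis calculation in $\sD\otimes\C[t]$, or, more conceptually, by verifying the inclusion $\sI_{0}\subseteq\mathscr{J}$ and then using that $\sD/\sI_{0}$ and $\sD/\mathscr{J}$ are holonomic of the same degree to force $\sI_{0}=\mathscr{J}$. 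Assembling the isomorphism $\sD/\sI_{t}\cong\tau(X)$ for $t\neq 0$, the $\C[t]$-flatness of the family, and the identification $\sD/\sI_{0}\cong\hat\tau(Y)$ then yields the asserted degeneration of $\sD$-modules.
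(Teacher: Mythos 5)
Your overall strategy---degenerate along the BCKS pencil $z_{14}z_{23}-z_{13}z_{24}+t\,z_{12}z_{34}$, conjugate the $\mathrm{SL}_4$-action fiberwise, take $t\to 0$ limits of the generators, and match the limits against the operators on $Y$---is the same as the paper's. But there are two genuine gaps. First, you have set the system up on the wrong space. The tautological system in the statement is $\tau_X(\mathrm{SL}_4,-K_X,\beta)$ with $-K_X=\sO(4)|_X$ under the Pl\"ucker embedding; its parameter space is $H^0(\mathbb{P}^5,\sO(4))^*$, with the $126$ coordinates $a_I$, $I\in\mathcal{E}$, and its polynomial operators are the \emph{first-order} operators $\p_\zeta$ for $\zeta$ in the degree-$4$ part $Q_4$ of the Pl\"ucker ideal together with the second-order Veronese binomials $\p_{\zeta_u}\p_{\zeta_v}-\p_{\zeta_p}\p_{\zeta_q}$ (Example \ref{TS-g24}) --- not a single box operator $\p_{12}\p_{34}-\p_{13}\p_{24}+\p_{14}\p_{23}$ on the six-dimensional space $\wedge^2\bbC^4$. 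The whole comparison with the system on $Y$ goes through the transformation $\Phi^t:H^0(Y,-K_Y)^*\to H^0(\mathbb{P}^5,\sO(4))^*$, $b_J\mapsto\sum_{\Phi(I)=J}a_I$, of Section \ref{coordinates}; none of this is visible in a six-variable Weyl algebra. (Your conjugation of the quadric does reproduce, once pushed to degree $4$, the paper's type-2 operators $\zeta=(z_{14}z_{23}-z_{13}z_{24}+tz_{12}z_{34})\cdot(\text{quadrics})$, so this part is repairable.)

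Second, and more seriously, your steps (i)--(ii) assert that the flat limit $\sI_0$ equals the full defining ideal $\mathscr{J}$ of the variant tautological system on $Y$ and propose to force equality from equal holonomic degree. This is false, and the ``(sub-)'' in the statement is there precisely for that reason: the limit of the fifteen $\fsl_4$-symmetries consists of twelve root operators $Z_\alpha$ and a three-dimensional torus (see the table in Section \ref{table}), whereas the variant system $\tau_Y(\aut_g(S),-K_Y,\beta_\chi)$ --- equivalently the extended GKZ system --- contains fourteen roots and the full four-dimensional torus. The roots $(1,1,1,1)$ and $(-1,-1,-1,-1)$, exactly those whose automorphisms move the fibers of $\psi:Y\to\overline{X}$ over the singular locus (Corollary \ref{root}), together with one torus direction, are \emph{not} in the image; recovering them is the content of Theorem \ref{main2}, not of this theorem, so your holonomic-degree argument would prove something that is not true. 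Note also that your description of the target $\hat\tau(Y)$ (toric binomials, torus operators, Euler) is only the ordinary GKZ system; the root operators $Z_\alpha$ are the essential extra generators of the extended system and must arise as the limits of the off-diagonal $\fsl_4$-operators. Finally, no poles or $t$-dependent recombinations occur: in the correct coordinates each conjugated root operator already has a finite limit --- e.g.\ for $E_{14}$ the action is $z_{24}\mapsto z_{24}-st\,z_{12}$, $z_{34}\mapsto z_{34}-s\,z_{13}$, and the $t$-term simply drops at $t=0$.
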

This variant tautological system is introduced in Section \ref{general TS} and is proved to be equivalent to the extended GKZ system in Thereom \ref{general on Y}.
Moreover, from the extended GKZ system on $\hat{P}(2,4)$, we can reconstruct the tautological  system on $G(2,4)$, which gives an explicit example of the statements (2) in Theorem \ref{thm1}.
\begin{thm}[={\bf Theorem} \ref{main2}]
The tautological system on $X$ is completely determined by the variant tautological system  on $Y$. Indeed, it is determined by the open part $Y^\circ:=Y-Z$, where $Z$ is the exceptional locus of the resolution $Y\to\overline{X}:=P(2,4)$.
\end{thm}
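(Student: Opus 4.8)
The plan is to reverse the degeneration constructed in the proof of Theorem~\ref{main1}. Write $\tau_X$ for the tautological system on $X=G(2,4)$ and $\tau_Y$ for the variant tautological system on $Y=\hat{P}(2,4)$. Using the explicit presentations from Section~\ref{general TS} and Theorem~\ref{general on Y}, $\tau_X$ is the $\sD$-module generated by the box operator $\square_{\sP}$ attached to the Pl\"ucker quadric $p_{12}p_{34}-p_{13}p_{24}+p_{14}p_{23}$, the first-order operators $Z_\xi+\langle\beta,\xi\rangle$ for $\xi$ in a Chevalley basis of $\fsl_4$ (with $\beta$ the character defining the system), and the Euler field; while $\tau_Y$ coincides with the extended GKZ system, generated by the box operators of the binomial relation $p_{12}p_{34}-p_{13}p_{24}$ of $\overline{X}=P(2,4)$, the operators of the Cartan torus $\mathfrak{t}\subset\fsl_4$, the Euler field, and the extra operators produced by the small resolution $Y\to\overline{X}$. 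Theorem~\ref{main1} realizes $\tau_Y$ as the $t\to0$ limit of the family in which the Pl\"ucker quadric is replaced by $p_{12}p_{34}-p_{13}p_{24}+t\,p_{14}p_{23}$. First I would record that, under the identification $H^0(X,-K_X)\cong H^0(Y,-K_Y)$ supplied by this flat degeneration, the data of $\tau_X$ not already present in $\tau_Y$ reduces to two finite pieces: (a) the operator $\partial_{14}\partial_{23}$, which is $\partial_t$ of the box family at $t=0$ and restores the non-toric term of the Pl\"ucker relation, and (b) the root-vector operators $Z_\xi$ for $\xi\in\fsl_4\setminus\mathfrak{t}$.

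Next I would reconstruct (a) and (b) from $\tau_Y$. For (b): because $[Z_\xi+\langle\beta,\xi\rangle,\,Z_\eta+\langle\beta,\eta\rangle]=Z_{[\xi,\eta]}+\langle\beta,[\xi,\eta]\rangle$ in the Weyl algebra and $\beta$ annihilates $[\fsl_4,\fsl_4]$, the $\sD$-ideal of all the $Z_\xi$ is already generated by those for a Lie generating set of $\fsl_4$; hence it suffices to produce the six simple root operators $Z_{E_{i,i+1}}$ and $Z_{E_{i+1,i}}$ ($i=1,2,3$), which I would write out explicitly in terms of the binomial box operators, the torus operators, and the operator $\partial_{14}\partial_{23}$ using the Chevalley relations. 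For (a): the coordinates $p_{14},p_{23}$ cut out exactly the singular line $\{p_{12}=p_{13}=p_{24}=p_{34}=0\}$ of $\overline{X}$, over which the exceptional locus $Z$ sits, so $\partial_{14}\partial_{23}$ is expected to be expressible through the extra operators of the extended GKZ system. Together this recovers $\tau_X$ from $\tau_Y$ and proves the first assertion.

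For the refinement to $Y^\circ=Y-Z$ I would argue that the extra operators used above carry exactly the ``linking'' datum of Theorem~\ref{thm1}(2): away from $Z$ the system $\tau_Y$ restricts to the GKZ system of the smooth toric locus, which is the flat limit of the bulk of $\tau_X$, while the extra operators only record how the exceptional curves $C_i\subset Z$ are attached. Concretely, I would show that the solution sheaf of $\tau_X$ --- the period integrals of Calabi--Yau complete intersections in $G(2,4)$ --- is obtained from the periods over cycles in $Y^\circ$, i.e. from the solutions of $\tau_Y|_{Y^\circ}$, via the linking pairing with the classes $[C_i]\in A(Y)$; since $H_3(Y^\circ)$ already contains the vanishing cycles created by deleting the $C_i$, this determines the full solution sheaf, hence $\tau_X$ itself. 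This is the explicit incarnation of statement (2) of Theorem~\ref{thm1}.

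The main obstacle will be step (b): restoring the $\fsl_4$-equivariance of $\tau_X$ from the torus symmetry of $\tau_Y$ together with the $Z$-data, i.e. showing that the toric degeneration of Theorem~\ref{main1} loses no information. A priori the non-Cartan directions are collapsed in the $t\to0$ limit, so one has to exhibit the simple root operators concretely inside the Weyl-algebra ideal generated by the binomial boxes, the torus operators, and the exceptional-locus operators, and simultaneously verify that $\partial_{14}\partial_{23}$ is generated precisely by the latter; this cannot be done by a formal argument and will require the explicit presentations of both systems, with the commutator bookkeeping for the twelve root vectors of $\fsl_4$ forming the computational heart of the proof.
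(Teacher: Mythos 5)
Your high-level plan (reverse the degeneration of Theorem \ref{main1}) is the right one, but the mechanism you propose rests on a mischaracterization of the variant tautological system on $Y$, and the step you single out as the ``computational heart'' is both unnecessary and unlikely to work. The system $\tau_Y(\aut_g(S),-K_Y,\beta_\chi)$ is \emph{not} generated by binomial box operators, Cartan-torus operators and some auxiliary ``resolution operators'': by Theorem \ref{general on Y} its first-order part contains the operators $Z_\alpha$ for \emph{all} $14$ roots $\alpha\in R(\Sigma,N)$ of $\hat{P}(2,4)$, and the table in Section \ref{table} already matches $12$ of these with the $12$ off-diagonal root vectors $E_{ij}$ of $\fsl_4$. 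So the non-Cartan directions are not ``collapsed in the $t\to 0$ limit''; they survive as first-order generators of $\tau_Y$. Consequently there is nothing to extract from commutators of torus and box operators — and that extraction would in any case fail, since an operator such as $Z_{E_{12}}=\sum_I (i_3+i_4)\,a_I\,\partial/\partial a_{I+(0,1,0,-1,0,0)}$ shifts the monomial index by a root and does not lie in the left $\sD$-ideal generated by the diagonal (index-preserving) operators and the second-order binomials. The actual content of the reconstruction is different: one must (i) \emph{discard} the two roots $\pm(1,1,1,1)$, which are precisely the root automorphisms that move the fibers of $\psi$ over the singular locus (Corollary \ref{root}) — this, and only this, is where $Y^\circ=Y-Z$ enters, not a linking pairing on solution sheaves; (ii) \emph{extend} each of the remaining $12$ root actions over the family $z_{14}z_{23}-z_{13}z_{24}+tz_{12}z_{34}=0$ by adding explicit $t$-linear correction terms (e.g.\ the root $(0,0,1,1)$, i.e.\ $z_{34}\mapsto z_{34}-\lambda z_{14}$, extends by also sending $z_{23}\mapsto z_{23}+t\lambda z_{12}$); (iii) recover the Cartan of $\fsl_4$ from brackets of the semi-simple roots; and then set $t=1$.

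Your item (a) is also off target in its details. The term lost in the degeneration of the Pl\"ucker quadric $z_{14}z_{23}-z_{13}z_{24}+tz_{12}z_{34}$ at $t=0$ is $z_{12}z_{34}$, so the relevant monomial operator would be $\partial_{12}\partial_{34}$, not $\partial_{14}\partial_{23}$; moreover the vertex line of the quadric cone $\overline{X}$ is $\{z_{13}=z_{14}=z_{23}=z_{24}=0\}$, not $\{z_{12}=z_{13}=z_{24}=z_{34}=0\}$. More importantly, no such single second-order operator needs to be produced from ``exceptional-locus operators'': the degree-one generators $\partial_\zeta$ of $\tau_X$, with $\zeta=(z_{14}z_{23}-z_{13}z_{24}+z_{12}z_{34})\cdot q$, are recovered simply by deforming each $\zeta=(z_{14}z_{23}-z_{13}z_{24})\cdot q$ of $\tau_Y$ to $\zeta_t=(z_{14}z_{23}-z_{13}z_{24}+tz_{12}z_{34})\cdot q$ and evaluating at $t=1$; the type-$3$ (Veronese binomial) operators are literally identical on both sides. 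As written, your argument would not close: the claim that $\partial_{14}\partial_{23}$ ``is expected to be expressible through the extra operators of the extended GKZ system'' is exactly the point that needs proof, and it is backwards relative to the paper, where the extra roots $\pm(1,1,1,1)$ are removed rather than used as a source of additional relations.
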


Furthermore, since the extended GKZ system can govern the period integrals only for toric hypersurfaces,  in Section \ref{TS-ci} we construct a general version of the extended GKZ system to take care of the period integrals for toric complete intersections.

As an ongoing project based on this work, we will treat the general case of arbitrary $(k,n)$ in subsequent papers.

The paper is organized as follows. In Section 1, we review some basic facts about extended GKZ systems, tautological systems and the special conifold transition considered here. In Section 2, we construct an explicit coordinate transformation from the moduli space of Calabi-Yau hypersurfaces in $G(2,4)$ to the one for $\hat{P}(2,4)$ so that we may identify the (different) group actions of $G(2,4)$ and $\hat{P}(2,4)$. In Section 3,  we give a detailed study of the possible ``variant tautological systems'' on $\hat{P}(k, n)$. In Section 4, we prove the main theorems and give remarks on certain rigidity issues to ensue the complete result for $B$ models.

\subsection{Acknowledgement}
We are grateful to Chin-Lung Wang for sharing with us his insights on this problem. H.-W. Lin is partially supported by the Ministry of Science and Technology in Taiwan and Taida Institute of Mathematical Sciences
(TIMS). 

\section{Preliminaries}

In this section, we set up the notations and recall some related well known properties.

\subsection{Extended GKZ systems}
\subsubsection{Automorphisms of Toric varieties} 
Only necessary material is recalled here, and readers are referred to \cite{C1995, C2014} for details. Let $N \cong\mathbb{Z}^n$ with the standard basis $\{e_1,\cdots,e_n\}$ and $M:=\hom_\bbZ(N,\bbZ)$. Let $\Sigma$ be a complete fan in $N_\bbR(:= N\otimes \mathbb{R})$ and $Y_\Sigma$ be the associated toric variety. We shall assume that $\Sigma$ is simplicial, i.e., $Y_\Sigma$ is $\bbQ$-factorial. Denote by $\Sigma(k)$ the collection of all $k$-dimensional cones in $\Sigma$. Put $S:=\bbC[w_\rho:\rho\in\Sigma(1)]$, which is a commutative ring graded by the (Weil) divisor class group $\cl(Y _\Sigma)$. We also use the same notation $\rho$ to denote the primitive generator of the 1-cone $\rho$. For a simplicial complete toric variety, we have the (geometric) quotient construction
$
\bbC^{\Sigma(1)}-Z(\Sigma)\slash H\isom Y_\Sigma.
$
Here $H:=\hom_\bbZ(\cl(Y_\Sigma),\bbC^\cross)$ and $Z(\Sigma)$ is the zero locus of the irrevalent ideal $I(\Sigma)$ of the fan $\Sigma$. 

To determine the automorphism group of $Y_\Sigma$, we recall that the set of the roots of $\Sigma$ is defined to be 
\[
R(\Sigma,N):=\{m\in M:\exists! \rho\in\Sigma(1)~\text{s.t.}~\langle m,\rho\rangle=-1,~\text{and}~\langle m,\rho'\rangle\geq 0~\text{for}~\rho'\neq\rho\}.
\]
It's exactly the set of the interior integral points lying on the codimension one faces of $\Delta_{-K_{Y_\Sigma}}$ (:= the polytope in $M_{\mathbb{R}}$ ($:= M\otimes \mathbb{R}$) determined by the anti-canonical divisor $-K_{Y_\Sigma}$).  Actually, each $\alpha\in R(\Sigma,N)$ gives a one-parameter subgroup of automorphisms of $Y_\Sigma$ as follows.  Let $\rho_\alpha$ be the unique 1-cone such that $\langle \alpha,\rho_\alpha\rangle=-1$ in the definition. Then, in $S\times S$,  we can associate $\alpha$ with the pair $(w_{\rho_\alpha},w^D(:=\prod_{\rho\neq \rho_\alpha} w_{\rho}^{\langle \alpha,\rho\rangle}))$. Note that $w_{\rho_\alpha}\neq w^D$ and $\deg(w_{\rho_\alpha})=\deg(w^D)$. We define, for $\lambda\in \mathbb{C}$, 
\[
y_\alpha(\lambda)(w_{\rho_\alpha})=w_{\rho_\alpha}+\lambda w^D~~{\rm and}~~y_\alpha(\lambda)(w_{\rho})=w_{\rho},~\text{for}~\rho\neq{\rho_\alpha}.
\]
This induces a graded automorphism on $S$ and thus induces an automorphism on $\bbC^{\Sigma(1)}$:
\[
y_\alpha(\lambda)_*(x_{\rho_\alpha},\mathbf{x}):=(x_{\rho_\alpha}+\lambda \mathbf{x}^D,\mathbf{x}),
\]
where $\mathbf{x}$ is a vector indexed by $\Sigma(1)-\{\rho_\alpha\}$. This automorphism preserves the Zariski open set $\bbC^{\Sigma(1)}-Z(\Sigma)$ since $\Sigma$ is assumed to be simplicial and commutes with $H$ and thus it induces an automorphism on $Y_\Sigma$. Together with the torus action, it is possible to describe the automorphism group of $Y_\Sigma$.
\begin{thm}
Let $Y_\Sigma$ be a complete simplical toric variety. Then $\aut_g(S)$, the graded automorphism of $S$, is generated by the torus $(\bbC^\cross)^{\Sigma(1)}$ and the one-parameter subgroups $y_\alpha(\lambda)$ for $\alpha\in R(\Sigma,N)$. Furthermore, we have an isomorphism
\[
\aut_g(S)\slash H\isom \aut^0(Y_\Sigma).
\]
Here $\aut^0(Y_\Sigma)$ is the connected component of the identity of the algebraic group $ \aut(Y_\Sigma)$.
\end{thm}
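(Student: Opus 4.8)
The plan is to determine $\aut_g(S)$ first by pure commutative algebra, and only afterwards to descend to $Y_\Sigma$ through the Cox quotient $\pi\colon\bbC^{\Sigma(1)}-Z(\Sigma)\to Y_\Sigma$. \emph{Step 1: the combinatorial lemma.} A graded automorphism $\phi$ is determined by the tuple $(\phi(w_\sigma))_{\sigma\in\Sigma(1)}$ with $\phi(w_\sigma)\in S_{\beta_\sigma}$, where $\beta_\sigma:=\deg w_\sigma=[D_\sigma]\in\cl(Y_\Sigma)$. A monomial $\prod_\rho w_\rho^{a_\rho}$ has degree $\beta_\sigma$ exactly when $(a_\rho-\delta_{\rho\sigma})_\rho$ lies in the image of the injection $M\injects\bbZ^{\Sigma(1)}$, $m\mapsto(\langle m,\rho\rangle)_\rho$ (injective since the rays of the complete fan $\Sigma$ positively span $N_\bbR$); if $m$ is the preimage then $\langle m,\rho\rangle=a_\rho\ge 0$ for $\rho\ne\sigma$ and $\langle m,\sigma\rangle=a_\sigma-1$. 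When $a_\sigma\ge 1$ the functional $m$ is non-negative on every ray, hence $m=0$ and the monomial is the linear term $w_\sigma$; otherwise $a_\sigma=0$, $\langle m,\sigma\rangle=-1$, and $m$ is precisely a root with $\rho_\alpha=\sigma$, the monomial being the associated $w^D$. Therefore
\[
\phi(w_\sigma)=\sum_{\beta_\rho=\beta_\sigma}c_{\sigma\rho}\,w_\rho+\sum_{\alpha\,:\,\rho_\alpha=\sigma}\lambda_{\sigma\alpha}\,w^{D(\alpha)},
\]
and the graded derivations $\der_g(S)$ are spanned by the $w_\rho\,\partial_{w_\sigma}$ with $\beta_\rho=\beta_\sigma$ together with the locally nilpotent $e_\alpha:=w^{D(\alpha)}\partial_{w_{\rho_\alpha}}$, $\alpha\in R(\Sigma,N)$.

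\emph{Step 2: generation of $\aut_g(S)$.} Since $\phi$ fixes the unique maximal graded ideal $\mathfrak{m}=(w_\rho:\rho\in\Sigma(1))$, it induces an automorphism $L(\phi)$ of $\mathfrak{m}/\mathfrak{m}^2=\bigoplus_\rho\bbC w_\rho$, namely the linear part; thus $L$ is a homomorphism $\aut_g(S)\to\prod_{[\beta]}\gl(V_{[\beta]})$, where $V_{[\beta]}:=\bigoplus_{\beta_\rho=[\beta]}\bbC w_\rho$, with an evident section. Its kernel $U$ consists of unipotent elements (on each finite-dimensional $S_\beta$, $\phi-\id$ strictly raises the degree in the variables), hence is a connected unipotent group whose Lie algebra is spanned by the $e_\alpha$ with $w^{D(\alpha)}$ non-linear; as each such $e_\alpha$ exponentiates to $y_\alpha(\lambda)\in U$, $U$ is generated by those one-parameter subgroups. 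On the other hand $\prod_{[\beta]}\gl(V_{[\beta]})$ is generated by the diagonal torus $(\bbC^\cross)^{\Sigma(1)}$ and the transvections $1+\lambda\,w_\rho\partial_{w_{\rho'}}$ for $\beta_\rho=\beta_{\rho'}$, and such a transvection equals $y_\alpha(\lambda)$ for the root $\alpha\in M$ with $\langle\alpha,\rho'\rangle=-1$, $\langle\alpha,\rho\rangle=1$ and $\langle\alpha,\cdot\rangle=0$ otherwise, which exists precisely because $[D_\rho]=[D_{\rho'}]$. Since every $\phi$ factors as an element of $U$ times a linear automorphism in $\prod_{[\beta]}\gl(V_{[\beta]})$, this proves the first assertion; in particular $\aut_g(S)$ is a connected linear algebraic group with Lie algebra $\der_g(S)$.

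\emph{Step 3: descent to $Y_\Sigma$.} Every generator preserves $Z(\Sigma)$ and commutes with the $H$-action, so descends; this gives a homomorphism $\Phi\colon\aut_g(S)\to\aut(Y_\Sigma)$ with $H\subseteq\ker\Phi$. If $\Phi(\phi)=\id$ then $\phi$ sends each $H$-orbit to itself, so $\phi(z)=h(z)\cdot z$ for a morphism $h\colon\bbC^{\Sigma(1)}-Z(\Sigma)\to H$; since $Z(\Sigma)$ has codimension $\ge 2$ in $\bbC^{\Sigma(1)}$, $h$ is a tuple of global units, hence constant, so $\phi\in H$ and $\ker\Phi=H$. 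The image $\imag\Phi$ is a closed connected subgroup, so contained in $\aut^0(Y_\Sigma)$; for the reverse inclusion I would compare Lie algebras. In characteristic zero $\ker d\Phi=\lie(\ker\Phi)=\lie H$, so $d\Phi$ embeds $\der_g(S)/\lie H$ into $\lie\aut(Y_\Sigma)=H^0(Y_\Sigma,\mathcal{T}_{Y_\Sigma})$. The generalized Euler sequence $0\to\sO_{Y_\Sigma}^{\oplus(\#\Sigma(1)-n)}\to\bigoplus_\rho\sO_{Y_\Sigma}(D_\rho)\to\mathcal{T}_{Y_\Sigma}\to 0$, combined with $H^1(Y_\Sigma,\sO_{Y_\Sigma})=0$ and Cox's identification $S_{\beta_\sigma}\isom H^0(Y_\Sigma,\sO(D_\sigma))$, shows $\dim H^0(\mathcal{T}_{Y_\Sigma})=\dim\der_g(S)-(\#\Sigma(1)-n)=\dim(\der_g(S)/\lie H)$; hence $d\Phi$ is onto $\lie\aut^0(Y_\Sigma)$, so $\imag\Phi=\aut^0(Y_\Sigma)$. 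A bijective homomorphism of algebraic groups in characteristic zero is an isomorphism, giving $\aut_g(S)/H\isom\aut^0(Y_\Sigma)$.

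\emph{Expected main obstacle.} Steps 1 and 2 are essentially formal once the combinatorial lemma is in hand. The real difficulty is the surjectivity of $\Phi$ onto $\aut^0(Y_\Sigma)$ in Step 3 — in substance Demazure's structure theorem for the automorphism group of a toric variety — which requires the generalized Euler sequence, the vanishing $H^1(Y_\Sigma,\sO_{Y_\Sigma})=0$, and Cox's isomorphism $S_\beta\isom H^0(Y_\Sigma,\sO(D))$ in the merely simplicial ($\bbQ$-factorial) setting. As an alternative one could try to show directly that every automorphism of $Y_\Sigma$ lifts along the quotient $\pi$, but that lifting statement is of comparable difficulty.
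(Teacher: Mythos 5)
The paper itself offers no proof of this theorem: it is recalled as background from Cox \cite{C1995, C2014}, so there is no internal argument to compare against. Your proposal is, in outline, the standard Demazure--Cox proof, and its architecture is sound: the monomial analysis in Step~1 correctly identifies $S_{\beta_\sigma}$ with the span of the variables of degree $\beta_\sigma$ and the monomials $w^{D(\alpha)}$ attached to roots $\alpha$ with $\rho_\alpha=\sigma$ (completeness giving injectivity of $M\to\bbZ^{\Sigma(1)}$), and Step~2 reproduces exactly the split exact sequence $1\to 1+\mathcal{N}\to\aut_g(S)\to\prod_i\gl(S_i')\to 1$ that the paper quotes from \cite{C2014}. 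So the first assertion of the theorem is essentially established by your Steps 1--2.

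Three points in Step~3, however, are genuine gaps rather than routine verifications. (i) That each $y_\alpha(\lambda)$ preserves $Z(\Sigma)$ is asserted, not proved; this is precisely where the simplicial hypothesis enters (one must check, per primitive collection containing $\rho_\alpha$, that the image point still lies in $Z(\Sigma)$), and it is the combinatorial heart of the descent. (ii) In the kernel computation, the map $h$ with $\phi(z)=h(z)\cdot z$ is well defined only where $H$ acts freely; for merely simplicial $\Sigma$ the action on $\bbC^{\Sigma(1)}-Z(\Sigma)$ has nontrivial finite stabilizers, so the appeal to $\codim Z(\Sigma)\geq 2$ does not by itself make $h$ a global unit. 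The repair is to restrict to the dense torus $(\bbC^\cross)^{\Sigma(1)}$, where freeness holds, conclude each $h_\rho$ is a constant times a Laurent monomial, and then use gradedness of $\phi$ (via the Step~1 analysis) to force $h$ constant. (iii) The surjectivity argument -- which you rightly identify as the main obstacle -- invokes the generalized Euler sequence and the identification $\lie\aut(Y_\Sigma)=H^0(Y_\Sigma,\mathcal{T}_{Y_\Sigma})$ for a possibly singular $Y_\Sigma$; one must specify that $\mathcal{T}_{Y_\Sigma}$ here is the reflexive sheaf $\sHom(\Omega_{Y_\Sigma},\sO_{Y_\Sigma})$ and justify the Euler sequence in that generality, which is essentially equivalent in depth to the statement being proved. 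With (i)--(iii) either proved or explicitly cited, the argument is correct.
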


We remark that for the big torus $T_N\cong(\mathbb{C}^\times)^n$ in $Y$, the action (as a one-parameter subgroup with variable $\lambda$) on it is given by
\[
t_i \mapsto t_i(1+\lambda t^\alpha)^{\langle e_i,\rho_\alpha\rangle}
\]
for each root $\alpha$. Here we adapt the multi-index convention $t^\alpha:=t_1^{\alpha_1}\cdots t_n^{\alpha_n}$.

\subsubsection{Extended GKZ systems} Let $\Sigma$ be a smooth fan, that is, let $Y:=Y_\Sigma$ be nonsingular. We assume that $-K_Y$ is \textit{base point free} and fix a global holomorphic top form $\Omega$ on $Y$. The space $H^0(Y,-K_Y)^*$ parametrizes Calabi-Yau hypersurfaces in $Y$. 
Put $\Delta:=\Delta_{-K_Y}$. A general element in $H^0(Y,-K_Y)$ is of the form
\[
f=\sum_{\mu\in\Delta} b_\mu t^\mu,
\]
which defines the Calabi-Yau hypersurface $Y_f:=\{f=0\}$.
For $\alpha\in R(\Sigma,N)$ and $\lambda\in\mathbb{C}$, the automorphism $y_\alpha(\lambda)$ induces an action on $f$ and $dm:=\wedge_{i=1}^n dt_i/t_i$ explicitly by:
\begin{align}
f&\mapsto \sum_\mu b_\mu t^\mu (1+\lambda t^\alpha)^{\langle \mu,\rho_\alpha\rangle}=:f(b,\lambda),\\
\bigwedge_{i=1}^n\frac{dt_i}{t_i}&\mapsto \bigwedge_{i=1}^n \left(\frac{dt_i}{t_i}+\lambda\langle e_i,\rho_\alpha\rangle\frac{1}{1+\lambda t^\alpha}\left(\sum_{j=1}^n \alpha_j t^\alpha\cdot\frac{dt_j}{t_j}\right)\right)=:dm_\lambda.
\end{align}

We fix a smooth section $f_0$ and a cycle $\gamma\in H_{n-1}(Y_{f_0},\mathbb{Z})$. Let $\tau(\gamma)$ be a small topological $S^1$-bundle of $\gamma$ around $Y_{f_0}$. The period integral
\[
\int_\gamma \Res\frac{\Omega}{f}=\int_{\tau(\gamma)} \frac{\Omega}{f}.
\]
should be invariant under $\aut^0(Y)$-action and the scaling action.
Denote by 
\[
\Pi_\gamma(b,\lambda):=\int_{\tau(\gamma)} \frac{1}{f(b,\lambda)} dm_\lambda.
\]
By the invariant property, we have 
\begin{eqnarray}\label{eq}
\left.\frac{d}{d\lambda}\right|_{\lambda=0} \Pi_\gamma(b,\lambda)=0.
\end{eqnarray}
The differentiation can be taken into the integrand since for a \textit{cycle} $\gamma$, $\tau(\gamma)$ is homologous to $g_*\tau(\gamma)$ for $g\in\aut^0(Y)$ close to identity. 

From the equation (\ref{eq}),
\begin{align}\label{pf1}
\left.\int_{\tau(\gamma)} \frac{-f'(b,\lambda)}{(f(b,\lambda))^2}dm\right|_{\lambda=0}+\left.\int_{\tau(\gamma)} \frac{1}{f(b,\lambda)}\frac{\p}{\p\lambda}dm_\lambda\right|_{\lambda=0}=0
\end{align}
The first term is simplifies to 
\[
\int_{\tau(\gamma)} \frac{\sum_\mu -b_\mu\langle\mu,\rho_\alpha\rangle t^{\mu+\alpha}}{(f(b,0))^2}dm.
\]
For the second term, substituting 
\[
\left.\frac{\p}{\p\lambda}dm_\lambda\right|_{\lambda=0}=\sum_{i=1}^n \alpha_i\langle e_i,\rho_\alpha\rangle t^\alpha dm=\langle \alpha,\rho_\alpha\rangle t^\alpha dm,
\]
we obtain, since $\gamma$ is a \textit{cycle},
\[
(\ref{pf1})=\int_{\tau(\gamma)}\frac{\sum_\mu -b_\mu\langle \mu-\alpha,\rho_\alpha\rangle t^{\alpha+\mu}}{f(b,0)^2}=Z_\alpha \Pi_\gamma(b)
\]
where the operator $Z_\alpha$ is defined to be
\[
Z_\alpha:=\sum_{\mu\in\Delta} \langle \rho_\alpha,\mu-\alpha\rangle b_\mu\frac{\p}{\p b_{\mu+\alpha}},
\]
and thus it annihilates the period integral.
Note that the expression on the right hand side is well-defined. Indeed, if $\mu+\alpha\notin \Delta$, then there exists a $\rho\in\Sigma(1)$ such that $\langle \mu+\alpha,\rho\rangle<-1$, so $\langle \mu,\rho\rangle=\langle \alpha,\rho\rangle=-1$. We must have $\rho=\rho_\alpha$ and $\langle \mu-\alpha,\rho\rangle=0$.

\begin{defn}[cf. \cite{HLY1996}]
For a smooth projective toric variety $Y$, the extended GKZ system on $Y$ consists of the original GKZ system and the $Z_\alpha$ operators, $\alpha\in R(\Sigma,N)$. 
\end{defn}

\begin{remark}
 It has been proved that the solution space of the original GKZ system is finite-dimensional with an explicit dimension formula \cite{A1994}. In fact, this number is usually much larger than the expected dimension. It has been conjectured that the solution space to the extended GKZ system has the expected dimension, i.e., the solutions are exactly the period integrals. 
Recently, Huang, Lian, Yau and Zhu construct the full set of solutions to the GKZ system \cite{HLYZ2015}. Also the main result of this paper together with Remark \ref{SofTS} can imply that the conjecture should be modified by some contribution from the primitive cohomology.
\end{remark}

\subsection{Tautological systems}

Let $G$ be a complex connected Lie group and $X$ be a smooth projective $G$-variety. A very ample $G$-equivariant line bundle $L\to X$ gives a natural $G$-representation $V:=H^0(X,L)^*$. It gives rise to a $G$-equivariant embedding $X\hookrightarrow \mathbb{P}V$. Let $\bbC^\cross$ act on $V$ by scaling and then $V$ becomes a $\hat{G}(:=G\times\bbC^\cross)$-module. Denote by $Z:\hat{\mathfrak{g}}\to\End(V)$ the corresponding Lie algebra homomorphism. Let $\hat{X}$ be the cone over the image of $X$ under the embedding and $I:=I(\hat{X},V)\subset\bbC[V]$ be its homogeneous ideal. For each $\zeta\in V^*$, we define the derivation by $\p_\zeta(a):=\langle a,\zeta\rangle$, $a\in V$. Here $\langle-,-\rangle$ is the canonical dual pairing. Finally, let $\beta:\hat{\mathfrak{g}}\to\bbC$ be a Lie algebra homomorphism.

The tautological system is defined to be the quotient $D$-module
\[
\tau_X(G,L,\beta):=D_{V^*}/D_{V^*}\langle p(\p_\zeta):p(\zeta)\in I\rangle+D_{V^*}\langle Z(x)+\beta(x):x\in \hat{\fg}\rangle.
\]
When $L=-K_X$, $\beta=(0;\id)$ and $G\subset \aut(X)$, $\tau_X(G,L,\beta)$ annihilates the period integrals. One may use a further embedding to simplify the polynomial operators $p(\p_\zeta)$. The pay is that we have to introduce some new degree one operators. The details can be found in \cite{LSY2013, LY2013}. Here is the simplest example.
\begin{example}\label{TS-g24}
 $X=G(2,4)$, $G=SL_4$. Let $L=-K_X$ and $\beta=(0;\id)$. The tautological system $\tau_X$ is generated by the following operators.
\begin{itemize}
\item[1.] $Z(x)+\beta(x)$, $x\in \fsl_4\times\mathbb{C}$.
\item[2.] $\p_\zeta$, where $\zeta\in (H^0(X, -K_X)^*)^\perp\subset H^0(\mathbb{P}^5,\sO(4))$.
\item[3.] $\p_{\zeta_u}\p_{\zeta_v}-\p_{\zeta_p}\p_{\zeta_q}$, where $u,v,p,q\in\mathcal{E}$ and $u+v=p+q$.
\end{itemize}
Here $\mathcal{E}=\{(i_0,i_1,i_2,i_3,i_4,i_5)\in\mathbb{Z}^6_{\geq 0}:\sum_{k=0}^5 i_k=4\}$. In fact, $-K_X=\left.\sO(4)\right|_X$ under the Pl\"{u}cker embedding. We can determine the degree one operator explicitly. Let $Q_4$ be the degree 4 part in the ideal generated by the Pl\"{u}cker relations. Then $H^0(X,-K_X)=H^0(\mathbb{P}^5,\sO(4))/Q_4$ and $\zeta\in (H^0(-K_X)^*)^\perp$ if and only if $\zeta\in Q_4$. In this case, $G$ is semi-simple and $H^4(X,\bbC)_{prim}\cong\bbC$. 
\end{example}

\begin{remark}\label{SofTS}
In case $G$ is semi-simple and $X$ is a homogeneous $G$-variety, it's shown that the tautological systems govern exactly the period integrals modulo some solutions coming from the primitive cohomology of the ambient space \cite{BHLSY2014, HLZ2015}.
\end{remark}

\subsection{Degeneration to toric varieties}

In \cite{BCKS2000}, Batyrev, Ciocan-Fontanine, Kim and Straten gave an explicit construction for the degeneration of the partial flag manifolds to toric varieties. We restrict ourselves to the case  $G(2,4)$ and give a quick review of this construction.  First, consider the following ``ladder diagram" $\Lambda$. 
\begin{center}
\small
\unitlength=0.28mm
\begin{picture}(200,150)(-10,10)


\qbezier[40](50,0)(50,60)(50,120)
\put(48,-2){\scriptsize $\circ$}
\put(48,58){\scriptsize $\circ$}
\put(48,118){\scriptsize $\circ$}

\qbezier[40](-10,0)(-10,60)(-10,120)
\put(-12,-2){\scriptsize $\circ$}
\put(-12,58){\scriptsize $\circ$}
\put(-12,118){\scriptsize $\circ$}

\qbezier[40](110,0)(110,60)(110,120)
\put(108,-2){\scriptsize $\circ$}
\put(108,58){\scriptsize $\circ$}
\put(108,118){\scriptsize $\circ$}

\qbezier[40](-10,0)(50,0)(110,0)
\qbezier[40](-10,60)(50,60)(110,60)
\qbezier[40](-10,120)(50,120)(110,120)


\qbezier[4000](20,30)(20,90)(20,150)
\qbezier[4000](20,30)(80,30)(140,30)
\qbezier[4000](80,30)(80,60)(80,90)
\qbezier[4000](20,90)(50,90)(80,90)
\put(18,28){\scriptsize $\bullet$}
\put(18,88){\scriptsize $\bullet$}
\put(18,148){\scriptsize $\bullet$}
\put(78,28){\scriptsize $\bullet$}
\put(138,28){\scriptsize $\bullet$}
\put(78,88){\scriptsize $\bullet$}

\put(5,60){$e_3$}
\put(5,120){$e_1$}
\put(45,95){$e_2$}
\put(45,35){$e_5$}
\put(86,60){$e_4$}
\put(105,35){$e_6$}
\put(8,90){\scriptsize $A$}
\put(8,30){\scriptsize $B$}
\put(78,20){\scriptsize $D$}
\put(78,95){\scriptsize $C$}
\put(-22,-13){$O_0$}
\put(113,122){$O$}

\end{picture}
\end{center}
\quad \\

All edges $e_i$'s are oriented downward or to the right. Let $E$ be the set of all edges. We consider the free abelian group generated by the edges of this diagram, $L(E)$, and the free abelian group generated by the black dots, $L(D)$. 

The boundary map $\delta: L(E)\tensor \bbR\cong {\bbR}^6\to L(D)\tensor\bbR\cong{\bbR}^4$ is the usual boundary map in homology theories. We construct a complete fan $\Sigma$ in ${\bbR}^4$ as follows. Let $\Sigma(1)=\{\delta(e_i): i=1,\ldots,6\}$, explicitly,
\begin{align*}
&\delta (e_1)=(1,0,0,0),~~\delta (e_2)=(-1,0,1,0), ~~\delta(e_3)=(-1,1,0,0),\\
&\delta (e_4)=(0,0,-1,1),~~\delta (e_5)=(0,-1,0,1),~~ \delta(e_6)=(0,0,0,-1).
\end{align*}
The fan structures are given by the positive paths from $O$ to $O_0$. A positive path is to start from $O$ and go either downward or go to the left at each corner until reaching the vertex $O_0$. There are $6$ positive paths. Let's denote by $\pi_{12}$ the positive path crossing only $e_1$, $\pi_{13}$ the one passing through $e_2$ and $e_3$, $\pi_{14}$ the one passing through $e_2$ and $e_5$, $\pi_{23}$ the one passing through $e_3$ and $e_4$, $\pi_{24}$ the one passing through $e_4$ and $e_5$, and finally $\pi_{34}$ the one passing through $e_6$ only. 
The correspondence between positive paths and cones in $\Sigma$ is given as follows, where $w_i$ stands for the variable corresponding to the 1-cone $\delta(e_i)$ for $i=1,\ldots,6$.

\quad \\
\begin{center}
\begin{tabular}{|c|l|l|}
\hline 
  \multirow{2}{*}{Positive paths} & Maximal cones in $\Sigma$ and the &  \multirow{2}{*}{$w^{\hat{\sigma}}:=\prod_{\rho\notin \sigma(1)}w_\rho$}  \\ 
  & corresponding 1-cone generators& \\\hline 
  
  $\pi_{12}$ & $\delta(e_2),~\delta(e_3),~\delta(e_4),~\delta(e_5),~\delta(e_6)$ & $w_1$ \\\hline
  $\pi_{13}$ & $\delta(e_1),~\delta(e_4),~\delta(e_5),~\delta(e_6)$ & $w_2w_3$ \\\hline
  $\pi_{14}$ & $\delta(e_1),~\delta(e_3),~\delta(e_4),~\delta(e_6)$ & $w_2w_5$ \\\hline
  $\pi_{23}$ & $\delta(e_1),~\delta(e_2),~\delta(e_5),~\delta(e_6)$ & $w_3w_4$ \\\hline
  $\pi_{24}$ & $\delta(e_1),~\delta(e_2),~\delta(e_3),~\delta(e_6)$ & $w_4w_5$ \\\hline
  $\pi_{34}$ & $\delta(e_1),~\delta(e_2),~\delta(e_3),~\delta(e_4),~\delta(e_5)$ & $w_6$ \\\hline
\end{tabular}\\
\end{center}
\quad\\

Let $\overline{X}:=P(2,4)$ be the associated toric variety with $\Sigma$, which is a singular Gorenstein toric Fano variety. In general, the toric variety $P(k,n)$ is singular with the singular locus consisting of codimension 3 conifold strata. More explicitly, in this case, there are two singular maximal cones, namely $\{e_1,e_2,e_3,e_4,e_5\}$ and $\{e_2,e_3,e_4,e_5,e_6\}$.  The refinements 
\begin{align*}
&\{e_1,e_2,e_3,e_4,e_5\}=\{e_1,e_2,e_3,e_4\}\cup \{e_1,e_2,e_4,e_5\},\\
&\{e_2,e_3,e_4,e_5,e_6\}=\{e_2,e_4,e_5,e_6\}\cup\{e_2,e_3,e_4,e_6\}
\end{align*}
give the canonical small resolution. Denote by $Y:=\hat{P}(2,4)$ the resolution and by $\Sigma'$ the refined fan.

We shall denote the (Weil) divisor associated with $f\in E$ by $H_f$.  
The roof $\mathcal{R}$ of a ladder diagram is defined to be the set of all edges on the upper right buondary, e.g., the roof of $\Lambda$ is $\{e_1,e_2,e_4,e_6\}$. For each $e\in\mathcal{R}$, denoted by $U(e)$ the subset of $E$ consisting of the edge $e$, together with all edges $f\in E$ which are either directly below $e$ if $e$ is horizontal, or directly to the left, if $e$ is vertical.

The divisor $\sL:=\sum_{f\in U(e)} H_f$ is Cartier and independent of the choice of $e$. 
From the construction, the line bundle $\sL$ is very ample. Also, we have $-K_{P(2,4)}=4\sL$ in $\cl(P(2,4))$. The global sections of $\sL$, which are the integral points in the polytope $\Delta_{\sL}$, can be parameterized by the positive paths \cite{BCKS2000}.

\quad \\
\begin{tabular}{|c|l|l|}
\hline  
  Positive paths & Integral points in $\Delta_{\sL}$  & Vertices in $\Delta:=\Delta_{-K_Y}\subset M$ \\\hline  
  $\pi_{12}$   & $(0,0,0,0)=:v_0$    & $(3,2,2,1)=4\cdot v_0+(3,2,2,1)$ \\\hline
  $\pi_{13}$ & $(-1,0,0,0)=:v_1$   & $(-1,2,2,1)=4\cdot v_1+(3,2,2,1)$ \\\hline
  $\pi_{14}$ & $(-1,-1,0,0)=:v_2$    & $(-1,-2,2,1)=4\cdot v_2+(3,2,2,1)$ \\\hline
  $\pi_{23}$ & $(-1,0,-1,0)=:v_3$    & $(-1,2,-2,1)=4\cdot v_3+(3,2,2,1)$ \\\hline
  $\pi_{24}$ & $(-1,-1,-1,0)=:v_4$    & $(-1,-2,-2,1)=4\cdot v_4+(3,2,2,1)$ \\\hline
  $\pi_{34}$ & $(-1,-1,-1,-1)=:v_5$    & $(-1,-2,-2,-3)=4\cdot v_5+(3,2,2,1)$ \\\hline
\end{tabular}\\\\
\quad\\

There are two coordinate systems for the toric variety $\overline{X}$. One comes from the projective embedding
$\overline{X}\hookrightarrow \mathbb{P}^5$ induced by $\sL$, which corresponds to the epimorphism:
\[
\mathbb{C}[z_{12},z_{13},z_{14},z_{23},z_{24},z_{34}] \twoheadrightarrow \mathbb{C}[z_{12},z_{13},z_{14},z_{23},z_{24},z_{34}]/(z_{14}z_{23}-z_{13}z_{24}).
\]
The other one comes from the (almost geometric) quotient presentation:
\[
\overline{X}=\mathbb{C}^6-Z(\Sigma)\sslash H,
\]
where 
$
H=\{(\lambda\mu,\lambda,\mu,\lambda,\mu,\lambda\mu):\lambda,\mu\in\bbC^\cross\}\subset(\bbC^\cross)^6
$
and the irrelevant ideal is given by
\[
I(\Sigma)=\langle w_1,w_2w_3, w_2w_5, w_3w_4, w_4w_5, w_6\rangle\subset \mathbb{C}[w_1,w_2,w_3,w_4,w_5,w_6].
\] 
 From the map $\mathbb{C}^{\Sigma(1)}-Z(\Sigma) \to \overline{X}\hookrightarrow \mathbb{P}^5$, we have the natural transformation between
these two coordinates: 
\[
z_{12}=w_1, ~z_{13}=w_2w_3,~ z_{14}=w_2w_5,~ z_{23}=w_4w_3, ~z_{24}=w_4w_5,~\text{and} ~z_{34}=w_6.
\]

Recall that the Pl\"ucker embedding of $G(2,4)$ into $\mathbb{P}^5$ is given by 
\[
\mathbb{C}[z_{12},z_{13},z_{14},z_{23},z_{24},z_{34}] \twoheadrightarrow \mathbb{C}[z_{12},z_{13},z_{14},z_{23},z_{24},z_{34}]/(z_{14}z_{23}-z_{13}z_{24}+z_{12}z_{34}).
\]
Hence the degeneration of $X:=G(2,4)$ to $\overline{X}$ can be constructed through
\begin{eqnarray}\label{degcoor}
\bbC[z_{12},z_{13},z_{14},z_{23},z_{24},z_{34},t]/(z_{14}z_{23}-z_{13}z_{24}+tz_{12}z_{34}).
\end{eqnarray}
Now we get the diagram
\[
\xymatrix{
& \hat{P}(2,4) \ar[d]^\psi &\\
& P(2,4) \ar@{~>}[r]^-i & G(2,4).
}
\]

\section{Symmery operators between $G(2,4)$ and $\hat{P}(2,4)$}
To compare the tautological system on $X$ with the extended GKZ system on $Y$, the first job we have to do is to find a good transformation between  their parameter spaces, namely, $H^0(\mathbb{P}^5,\sO(4))^*$ and $H^0(Y,-K_Y)^*$. And then under such transformation, we are able to connect the symmetry operators in the tautological system on $X$ with the ones in the extended GKZ system on $Y$.
\subsection{The moduli coordinate transformation}\label{coordinates}
Let $\sL=\sum_{f\in U(e)} H_f$ which is a very ample divisor on $\overline{X}$. 
To look for a natural transformation from $H^0(Y,-K_Y)^*$ to  $H^0(\mathbb{P}^5,\sO(4))^*$, we start with the $\sL$-embedding $\psi_{\sL}: \overline{X}\hookrightarrow\mathbb{P}^5$ , $-K_{\overline{X}}=4\sL$ and the natural map
\begin{align}\label{autlin}
\Psi : H^0(\mathbb{P}^5, \sO(4))=H^0(\mathbb{P}^5,\sO(1))^{\otimes4}\to H^0(\overline{X}, -K_{\overline{X}})=H^0(Y,-K_Y).
\end{align}
The last equality holds because $\psi:Y=\hat{P}(2,4)\to \overline{X}=P(2,4)$ is a small resolution and $\overline{X}$  has rational singularities.

 Observe that $\Sigma'(1)=\Sigma(1)$ and thus we have the (Weil) divisor class group isomorphism $\cl(Y)\cong \cl(\overline{X})$. Hence, $S:=\bbC[w_\rho:\rho\in\Sigma(1)]$ can be regarded as the homogeneous coordinate ring for both $\overline{X}$ and $Y$. More explicitly,
the resolution $Y$ is a geometric quotient of $\bbC^6-Z(\Sigma')$ by \textit{the same} group $H$ with the irrelevant ideal 
\[
I(\Sigma')=\langle w_1w_3, w_1w_5, w_2w_3, w_2w_5, w_4w_3, w_4w_5, w_6w_3, w_6w_5\rangle.
\]
In fact, they all fit into the diagram
\[
\xymatrix{
& \bbC^6-Z(\Sigma') \ar[d] \ar[r]^\subset & \bbC^6-Z(\Sigma)\ar[d]\ar[rd]\\
& \bbC^6-Z(\Sigma')/H=Y \ar[r] & \bbC^6-Z(\Sigma)\sslash H=\overline{X} \ar[r]_-{\psi_{\sL}} & \mathbb{P}^5.
}
\]
and the base-point free line bundle $\psi^*\sL$ on $Y$ determines the morphism
$$
\psi_{\psi^*\sL} = \psi_{\sL}\circ\psi: \bbC^6-Z(\Sigma')/H=Y \rightarrow\mathbb{P}H^0(Y,\sL)^*=\mathbb{P}^5,
$$
which is $\aut_g(S)$-equivariant. Note that we also denote the pull-back $\psi^*\sL$ by $\sL$ when no confusion is caused. If $d=\sL\in \cl(Y)$, then $H^0(Y, \sL)\isom S_d$, the homogeneous part of $d$-grading in $S$. Thus it is fixed by the graded automorphism group $\aut_g(S)$  of $S$ and we get the following proposition.

\begin{prop}\label{linearization}
The line bundle $\mathscr{L}$ is $\aut_g(S)$-linearizable.
\end{prop}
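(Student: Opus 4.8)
The plan is to realize $\sL$ as the pull-back of $\sO_{\mathbb P^5}(1)$ along an $\aut_g(S)$-equivariant morphism and then pull back the natural linearization of $\sO(1)$ coming from a linear action.

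First I would set $d:=[\sL]\in\cl(Y)\cong\cl(\overline X)$. Since $\aut_g(S)$ acts on $S$ by \emph{graded} ring automorphisms, it preserves the homogeneous component $S_d$ and acts there $\mathbb C$-linearly; under $H^0(Y,\sL)\cong S_d$ this yields a linear representation $\rho\colon\aut_g(S)\to\gl(S_d)$, hence an action of $\aut_g(S)$ on $\mathbb P(S_d^{*})=\mathbb P^5$. Because $\sL=\psi^*\sL$ is base point free on $Y$, the evaluation epimorphism $H^0(Y,\sL)\otimes\sO_Y\twoheadrightarrow\sL$ defines the morphism $\phi:=\psi_{\psi^*\sL}\colon Y\to\mathbb P(S_d^{*})$ attached to the complete linear system $|\sL|$, and $\phi^*\sO_{\mathbb P^5}(1)\cong\sL$; equivalently $\phi=\psi_{\sL}\circ\psi$ and $\phi^*\sO(1)=\psi^*(\psi_{\sL}^{*}\sO(1))=\psi^*\sL$. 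As observed just above the proposition, this $\phi$ is $\aut_g(S)$-equivariant for the action of $\aut_g(S)$ on $\mathbb P^5$ through $\rho$.

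Next I would note that $\rho$ induces a canonical $\aut_g(S)$-linearization of $\sO_{\mathbb P(S_d^{*})}(1)$: the group $\gl(S_d)\cong\gl(S_d^{*})$ acts on the total space of the tautological bundle $\sO(-1)\subset S_d^{*}\times\mathbb P(S_d^{*})$ through its linear action on the first factor, hence on $\sO(1)$ by duality, and one restricts this along $\rho$. Pulling this linearization back along the equivariant morphism $\phi$ gives an $\aut_g(S)$-equivariant structure on $\phi^*\sO_{\mathbb P^5}(1)\cong\sL$ lifting the $\aut_g(S)$-action on $Y$, which is exactly the desired linearization. (Alternatively, one can argue directly in Cox coordinates: $\sL$ is the $H$-quotient of the trivial bundle on $\mathbb C^{6}-Z(\Sigma')$ carrying the $H$-linearization attached to the character $d$; since $\aut_g(S)$ acts on $\mathbb C^{6}-Z(\Sigma')$ commuting with $H$, letting it act trivially on the fibre produces an action commuting with the $H$-linearization, which therefore descends to $\sL$ on $Y$.)

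This is an assembly of standard facts, so I do not expect a serious obstacle; the only points that need care are that $\aut_g(S)$ genuinely preserves $S_d$ and acts linearly there — immediate from its being a group of graded ring automorphisms — and that \emph{base point freeness} of $\sL$ on $Y$, rather than very ampleness (which fails since $\psi$ is a small resolution and $\sL$ contracts the exceptional curves), still suffices to produce the morphism $\phi$ with $\phi^*\sO(1)\cong\sL$, so that the pull-back linearization makes sense.
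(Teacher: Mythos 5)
Your proposal is correct and follows essentially the same route as the paper: the paper's argument is precisely that $H^0(Y,\sL)\cong S_d$ is preserved by the graded automorphism group, so the morphism $\psi_{\sL}\circ\psi\colon Y\to\mathbb P H^0(Y,\sL)^*=\mathbb P^5$ determined by the base point free system $|\sL|$ is $\aut_g(S)$-equivariant and $\sL$ inherits the linearization of $\sO(1)$ by pull-back. Your write-up merely supplies a little more detail (the tautological-bundle description of the linearization of $\sO(1)$, and the Cox-coordinate alternative), which is consistent with the paper.
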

In fact, it follows from the following lemma with $r=4$ that $H^0(Y,\sL)^{\otimes 4} \to H^0(Y,-K_Y)$ is surjective, that is, $\Psi$ is surjective.

\begin{lem}\label{lem}
The polytope $\Delta_{\sL}$ associated with $\sL$ in $M$ is normal, i.e., $r\Delta_{\sL}\cap M=r(\Delta_{\sL}\cap M)$ for all $r\in\mathbb{Z}^{>0}$.
\end{lem}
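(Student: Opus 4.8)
The plan is to recognise $\Delta_{\sL}$, up to a lattice automorphism of $M$, as the order polytope of a finite poset, and then use the elementary decomposition of lattice points available for such polytopes. Conceptually this is just the normality of the associated Hibi ring, and it could also be deduced from a unimodular triangulation; I will give the direct argument, since it is short and self-contained.

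First I would read off the facet structure of $\Delta_{\sL}$ from the six vertices $v_0,\dots,v_5$ listed above. A short check of those six points shows that
\[
\Delta_{\sL}=\bigl\{\,x\in\bbR^4 : -1\le x_1\le x_2\le x_4\le 0,\ \ x_1\le x_3\le x_4\,\bigr\},
\]
and that the six inequalities on the right are exactly the facet inequalities: the polytope they define has $v_0,\dots,v_5$ as its only vertices, which matches the fact that $\Sigma$ is the normal fan of $\Delta_{\sL}$ and has six maximal cones. Applying the lattice involution $x\mapsto -x$ of $M$ then identifies $\Delta_{\sL}$ with
\[
\mathcal O:=\bigl\{\,a\in[0,1]^4 : a_4\le a_2\le a_1,\ \ a_4\le a_3\le a_1\,\bigr\},
\]
the order polytope of the four-element poset $P$ given by $4<2<1$ and $4<3<1$ (the ``diamond'' attached to the ladder diagram $\Lambda$). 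Since every facet inequality of $\mathcal O$ has the form $a_i\ge 0$, $a_i\le 1$, or $a_i\le a_j$, dilation commutes with taking lattice points in the expected way: for every integer $r\ge 1$ the set $r\mathcal O\cap\bbZ^4$ is precisely the collection of $P$-order-preserving maps $\{1,2,3,4\}\to\{0,1,\dots,r\}$.

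The key step is then the decomposition. Given $a\in r\mathcal O\cap\bbZ^4$, I would form, for $j=1,\dots,r$, the ``threshold'' vector $b^{(j)}\in\{0,1\}^4$ with $b^{(j)}_i=1$ exactly when $a_i\ge r-j+1$. Because $i\mapsto a_i$ respects $P$ and thresholding is monotone, each $b^{(j)}$ respects $P$, so $b^{(j)}\in\mathcal O\cap\bbZ^4$; and summing over $j$ gives $\sum_{j=1}^r b^{(j)}_i=\#\{\,j\in\{1,\dots,r\}:j\ge r-a_i+1\,\}=a_i$ since $0\le a_i\le r$. Hence $a=\sum_{j=1}^r b^{(j)}$ is a sum of $r$ lattice points of $\mathcal O$. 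Transporting this back through the lattice isomorphism of the previous paragraph yields $r\Delta_{\sL}\cap M\subseteq r(\Delta_{\sL}\cap M)$, while the reverse inclusion is immediate from convexity, proving normality. I do not expect a genuine obstacle here: the only delicate point is the finite combinatorial check in the second paragraph (that the six listed points are all the vertices of $\Delta_{\sL}$ and that the six displayed inequalities are the facets), after which the threshold decomposition is forced by the poset structure.
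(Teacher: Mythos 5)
Your proof is correct, but it takes a genuinely different route from the paper's. The paper never touches the facet presentation of $\Delta_{\sL}$ inside $M$: it works in the homogeneous coordinate ring $S=\bbC[w_1,\dots,w_6]$ graded by $\cl(\overline{X})\isom\bbZ^2$ (with $\deg w_2=\deg w_4=(1,0)$, $\deg w_3=\deg w_5=(0,1)$, $\deg w_1=\deg w_6=(1,1)$), identifies $H^0(k\sL)$ with the graded piece $S_{k(1,1)}$, and shows $S_{k(1,1)}=(S_{(1,1)})^k$ by stripping off the $w_1,w_6$ factors and pairing each remaining $w_2$ or $w_4$ with a $w_3$ or $w_5$; generation of the section ring in degree one is then equivalent to normality of $\Delta_{\sL}$. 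You instead compute the facet inequalities directly from $\sL\sim D_1$ --- they are $\langle m,\delta(e_1)\rangle\ge -1$ and $\langle m,\delta(e_i)\rangle\ge 0$ for $i\ge 2$, which reproduces exactly your list, so that ``short check'' is solid --- recognize $-\Delta_{\sL}$ as the order polytope of the diamond poset, and apply the threshold (chain) decomposition, which is the standard proof of the integer decomposition property for order polytopes. Both arguments are elementary and complete: the paper's is a bipartite pairing of monomials in Cox coordinates, yours is a poset-theoretic decomposition in $M$-coordinates. Your identification is arguably the more structural one, since the order-polytope/Hibi-ring picture of the ladder diagram is what underlies the toric degenerations $P(k,n)$ in general and so points directly at the generalization to arbitrary $(k,n)$ announced in the introduction, whereas the paper's version has the advantage of requiring no facet computation at all, only the $\bbZ^2$-grading of $S$.
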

\begin{proof}
We have the well-known exact sequence
\[
0\to M\to \bbZ^6\to \cl(\overline{X})\to 0.
\]
The first map is given by $m\mapsto (\langle m,\rho_i\rangle)_{i=1}^6$, where $\rho_i$'s are the corresponding 1-cones in $\Sigma$ or $\Sigma'$. Let $D_i:=D_{\rho_i}$. It's not hard to see that the image of $D_2$, $D_3$ generate the group $\cl(\overline{X})$. Since the smooth  toric variety $Y$ comes from a fan containing a cone of maximal dimension, $\Pic(Y)\isom \cl(Y)\isom \cl(\overline{X})$ is torsion-free. Using the relation $D_1=D_2+D_3$, $D_3=D_5$, $D_2=D_4$ and $D_6=D_4+D_5$, we see that the homogeneous coordinate ring $S$ is graded by $\bbZ_{\geq 0}^2$, with 
\begin{align*}
&\deg(w_2)=\deg(w_4)=(1,0),\\
&\deg(w_3)=\deg(w_5)=(0,1),\\
&\deg(w_1)=\deg(w_6)=(1,1).
\end{align*}
By the coordinate transformation $z_{12} =w_1$, $\sL\isom \sO_{\overline{X}}(D_1)$. Set $d=[D_1]$. We claim that the ring $\bigoplus_{k=0}^\infty S_{kd}$ is a $\bbC$-algebra generated by $S_d$. Indeed, let $x\in S_{kd}$ and then $\deg(x)=(k,k)$. Suppose $x$ is a monomial. After removing $w_1$ and $w_6$, we may assume $x$ contains no variable $w_1$ and $w_6$, i.e., $x=w_2^{a_1} w_3^{a_2} w_4^{a_3} w_5^{a_4}$. By degree reason we have $a_1+a_3=a_2+a_4$. Since $S_d$ contains all the variables of degree $d=(1,1)$ in $S$, $x$ is a multiple of elements in $S_d$. This implies that $\Delta_{\sL}$ is normal.
\end{proof}

Moreover, we have to write down $\Psi$ explicitly.
According to the relation (mentioned in section 1.3) between 
the integral points in $\Delta_{\sL}$  and the vertices in $\Delta:=\Delta_{-K_Y}\subset M$, we define a map:
\[
\Phi: \mathcal{E}\to \Delta\cap M,~~I\mapsto \sum_{k=0}^5 i_k v_k+(3,2,2,1),
\]
where $\mathcal{E}=\{I=(i_0,i_1,i_2,i_3,i_4,i_5)\in\mathbb{Z}^6_{\geq 0}:\sum_{k=0}^5 i_k=4\}$. 
Then we obtain a (surjective) $\bbC$-linear map $\Phi:H^0(\mathbb{P}^5,\sO(4))\to H^0(Y,-K_Y)$ by $z^I\mapsto t^{\Phi(I)}$. The dual  (injective) map $H^0(Y, -K_Y)^* \rightarrow H^0(\mathbb{P}^4, \sO(4))^*$  gives the transformation $\Phi^t$ from the ``moduli" of Calabi-Yau hypersurfaces in $Y$ to the parameter space for tautological system on $X$ by
\begin{eqnarray}\label{coor}
\Phi^t(b_J)=\sum_{I\in\mathcal{E}, \Phi(I)=J} a_I,\quad J\mbox{ is an integral point in} \ \Delta. 
\end{eqnarray}
Here $\sum_{J\in \Delta\cap M}b_Jt^J$ stands for a general section in $H^0(Y, -K_Y)$ and $\sum_{I\in \mathcal{E}}a_Iz^I$ stands for a general section in $H^0(\mathbb{P}^5,\sO(4))$.

\subsection{Equivalence of symmetry operators}\label{symmetry}
Recall that for the tautological system on $X$, the differential operators in the first type are given by $$Z(x)+\beta(x), \quad{\rm for}\  x\in \fsl_4\times\mathbb{C}.$$
We consider the operators generated by $\fsl_4$ and note that $\beta=(0;\id)$. Since these operators are generated by group actions, it suffices to understand how the group action degenerates. We divide them into several cases.

\subsubsection{Diagonal actions}
Suppose that
\[
x=\begin{pmatrix}1 & 0 & 0 & 0\\
0 & -1 & 0 & 0 \\
0 & 0 & 0 & 0\\
0 & 0 & 0 & 0
\end{pmatrix}\in \fsl_4.
\]
It's easy to get that for $\mathbb{P}^5=\mathbb{P}(V:=\wedge^2\mathbb{C}^4)$,
\[
Z(x)=\begin{pmatrix}0 & 0 & 0 & 0 & 0 & 0\\
0 & 1 & 0 & 0 & 0 & 0\\
0 & 0 & 1 & 0 & 0 & 0\\
0 & 0 & 0 & -1 & 0 & 0\\
0 & 0 & 0 & 0 & -1 & 0\\
0 & 0 & 0 & 0 & 0 & 0\\
\end{pmatrix}^t
=\begin{pmatrix}0 & 0 & 0 & 0 & 0 & 0\\
0 & 1 & 0 & 0 & 0 & 0\\
0 & 0 & 1 & 0 & 0 & 0\\
0 & 0 & 0 & -1 & 0 & 0\\
0 & 0 & 0 & 0 & -1 & 0\\
0 & 0 & 0 & 0 & 0 & 0\\
\end{pmatrix}\in {\rm End}(V^*),
\]
so the corresponding operator $Z(x)+\beta(x)$ in the tautological system is given by
\[
\sum_{I\in\mathcal{E}} (i_1+i_2-i_3-i_4)a_I\frac{\p}{\p a_I}.
\]
Put $\lambda = e^{s}$ and then the corresponding actions on $z_{ij}$'s are given by
\[
z_{12}\mapsto z_{12},~z_{13}\mapsto \lambda z_{13},~z_{14}\mapsto \lambda z_{14}, z_{23}\mapsto \lambda^{-1} z_{23},~z_{24}\mapsto \lambda^{-1} z_{24},~z_{34}\mapsto z_{34},
\]
so it corresponds to the action on $w_i$'s via 
\[
 (w_1,w_2,w_3,w_4,w_5,w_6)\mapsto (w_1,\lambda w_2,w_3,\lambda^{-1} w_4,w_5,w_6).
\]
Let $\lambda_2={\lambda_4}^{-1}=\lambda$, $\lambda_j=1$ for other $j$.
In terms of the coordinates on $T_N$, this action corresponds to 
\[
t_i\mapsto \prod_{k=1}^6 {\lambda_k}^{\langle f_i,\delta(e_k)\rangle} t_i
\]
where $f_i$'s stand for the standard basis of $M$, that is,
\[
t_1\mapsto \lambda^{-1} t_1,~ t_2\mapsto t_2,~t_3\mapsto \lambda^{2} t_3,~ t_4\mapsto \lambda^{-1} t_4. 
\]
The corresponding GKZ-system is 
\[
\sum_{J\in\Delta\cap M} (-j_1+2j_3-j_4)b_J\frac{\p}{\p b_J}.
\]
Recall our identification between $I$ and $J$:
\[
\begin{pmatrix}
j_1\\
j_2\\
j_3\\
j_4
\end{pmatrix}
=\sum_{k=0}^5 i_kv_k+
\begin{pmatrix}
3\\
2\\
2\\
1
\end{pmatrix}=
\begin{pmatrix}
-i_1-i_2-i_3-i_4-i_5\\
-i_2-i_4-i_5\\
-i_3-i_4-i_5\\
-i_5
\end{pmatrix}
+
\begin{pmatrix}
3\\
2\\
2\\
1
\end{pmatrix}
=
\begin{pmatrix}
i_0-1\\
2-i_2-i_4-i_5\\
2-i_3-i_4-i_5\\
1-i_5
\end{pmatrix}.
\]
Then 
\begin{align*}
-j_1+2j_3-j_4&=-i_0+1+4-2i_3-2i_4-2i_5+i_5-1\\
&=4-i_0-2i_3-2i_4-i_5=i_1+i_2-i_3-i_4.
\end{align*}
Under $\Phi^t(b_J)=\sum_{I\in\mathcal{E}, \Phi(I)=J} a_I$,
\[
\frac{\p}{\p a_I}=\frac{\p}{\p b_{\Phi(I)}}.
\]
and the operator 
\[
\sum_{I\in\mathcal{E}} (i_1+i_2-i_3-i_4)a_I\frac{\p}{\p a_I}=\sum_{I\in\mathcal{E}} (i_1+i_2-i_3-i_4)a_I\frac{\p}{\p b_{\Phi(I)}}.
\]
Note that if $\Phi(I)=\Phi(I')$, then we must have $i_5=i_5'$ and
\[
\sum_{k=1}^4(i_k-i_k')v_k=0.
\]
The only relation among $v_1,v_2,v_3,v_4$ is $v_1-v_2-v_3+v_4=0$, so we must have $i_1=i_1'+l$, $i_2=i_2'-l$, $i_3=i_3'-l$, and $i_4=i_4'+l$ for some $l\in\mathbb{Z}$. This also implies $i_0=i_0'$. Thus,
\[
i_1+i_2-i_3-i_4=(i_1'+l)+(i_2'-l)-(i_3'-l)-(i_4'+l)=i_1'+i_2'-i_3'-i_4'.
\]
Hence under the transformation $\Phi^t$, for $J\in\Delta\cap M$,
\begin{align*}
\sum_{I\in\mathcal{E},\Phi(I)=J} (i_1+i_2-i_3-i_4)a_I\frac{\p}{\p b_{\Phi(I)}}&=(i_1+i_2-i_3-i_4)\sum_{I\in\mathcal{E},\Phi(I)=J} a_I\frac{\p}{\p b_{\Phi(I)}}\\&= (-j_1+2j_3-j_4) b_J\frac{\p}{\p b_J}
\end{align*}
and finally we get 
$$\sum_{I\in\mathcal{E}}(i_1+i_2-i_3-i_4)a_I\frac{\p}{\p a_I}\\
=\sum_{J\in\Delta\cap M} (-j_1+2j_3-j_4) b_J\frac{\p}{\p b_J}.
$$
For other two generators, we may check them similarly.

\subsubsection{ Off diagonal actions (A)} Suppose that
\[
x=\begin{pmatrix}0 & 1 & 0 & 0\\
0 & 0 & 0 & 0 \\
0 & 0 & 0 & 0\\
0 & 0 & 0 & 0
\end{pmatrix}\in \fsl_4.
\]
The induced action of $\exp(xs)$ on the coordinates $z_{ij}$ is given by
\[
z_{12}\mapsto z_{12},~z_{13}\mapsto  z_{13},~z_{14}\mapsto z_{14}, z_{23}\mapsto z_{23}+sz_{13},~z_{24}\mapsto z_{24}+sz_{14},~z_{34}\mapsto z_{34}
\]
and thus the action on the monomial is given by
\[
z_{12}^{i_0}z_{13}^{i_1}z_{14}^{i_2}z_{23}^{i_3}z_{24}^{i_4}z_{34}^{i_5}\mapsto z_{12}^{i_0}z_{13}^{i_1}z_{14}^{i_2}(z_{23}+sz_{13})^{i_3}(z_{24}+sz_{14})^{i_4}z_{34}^{i_5}.
\]
To compute the Lie algebra homomorphism, it suffices to keep the coefficient of $s$, that is, we only need to consider the homomorphism
\[
z_{12}^{i_0}z_{13}^{i_1}z_{14}^{i_2}z_{23}^{i_3}z_{24}^{i_4}z_{34}^{i_5}\mapsto i_3 z_{12}^{i_0}z_{13}^{i_1+1}z_{14}^{i_2}z_{23}^{i_3-1}z_{24}^{i_4}z_{34}^{i_5}+i_4 z_{12}^{i_0}z_{13}^{i_1}z_{14}^{i_2+1}z_{23}^{i_3}z_{24}^{i_4-1}z_{34}^{i_5}.
\]
To simplify the notation, we write
\[
z^I \mapsto i_3 z^{I+(0,1,0,-1,0,0)}+i_4z^{I+(0,0,1,0,-1,0)}
\]
and thus the corresponding operator $Z(x)$ is given by
\[
\sum_{I\in\mathcal{E}} a_I \left( i_3~\frac{\p}{\p a_{I+(0,1,0,-1,0,0)}}+i_4~\frac{\p}{\p a_{I+(0,0,1,0,-1,0)}}\right)=\sum_{I\in\mathcal{E}} (i_3+i_4)a_I\frac{\p}{\p b_{\Phi(I)+\alpha}},
\]
where $\alpha=(0,0,1,0)$. Here, just as before, if $\Phi(I)=\Phi(K)$, then $i_3+i_4=k_3+k_4$. Since $\alpha$ satisfies $\langle \alpha,\delta(e_4)\rangle=-1$, $\langle \alpha,\delta(e_2)\rangle=1$, and $\langle \alpha,\delta(e_i)\rangle=0$ for other $i$, it is a root with $\rho_\alpha=\delta(e_4)=(0,0,-1,1)$ and the corresponding differential operator in the extended GKZ system is
\[
Z_\alpha=\sum_{J\in\Delta\cap M} \langle \rho_\alpha, J-\alpha \rangle b_J \frac{\p}{\p b_{J+\alpha}}=\sum_{J\in\Delta\cap M}  (1-j_3+j_4) b_J \frac{\p}{\p b_{J+\alpha}}.
\]
Also, $1-j_3+j_4=1-(2-i_3-i_4-i_5)+(1-i_5)=i_3+i_4$. Hence the operator $Z_\alpha$ in the extended GKZ system on $Y$ is equal to the operator $Z(x)$ in the tautological system on $X$ under the transformation $\Phi^t$.

We remark that the action of $\exp(xs)$ on $X$ degenerates to the action on $\overline{X}$ by the same formula. In terms of coordinate $w_i$'s, it is given by
\[
w_4\mapsto w_4+sw_2,~~w_i\mapsto w_i~~\text{for other $i$},
\]
which corresponds to the root $(0,0,1,0)$.

\subsubsection{ Off diagonal actions (B)} Suppose that
\[
x=\begin{pmatrix}0 & 0 & 0 & 1\\
0 & 0 & 0 & 0 \\
0 & 0 & 0 & 0\\
0 & 0 & 0 & 0
\end{pmatrix}\in \fsl_4.
\]
 It's easy to compute the induced action of $\exp(xs)$ on $z_{ij}$ is given by
\[
z_{12}\mapsto z_{12},~z_{13}\mapsto  z_{13},~z_{14}\mapsto z_{14}, z_{23}\mapsto z_{23},~z_{24}\mapsto z_{24}-sz_{12}t,~z_{34}\mapsto z_{34}-sz_{13}.
\]
Here $t$ is the degeneration coordinate from $G(2,4)$ to $P(2,4)$.
The key point is that in order to make the $SL_4$-action to be invariant on each fiber, we have to adjust the action by the factor $t$. On way to do this is to fix an isomorphism $X_t\cong X_1=X$ and then use the induced $SL_4$-action on $X_t$. 

We compute the Lie algebra homomorphism as before and get
\[
z_{12}^{i_0}z_{13}^{i_1}z_{14}^{i_2}z_{23}^{i_3}z_{24}^{i_4}z_{34}^{i_5}\mapsto -ti_4 z_{12}^{i_0+1}z_{13}^{i_1}z_{14}^{i_2}z_{23}^{i_3}z_{24}^{i_4-1}z_{34}^{i_5}-i_5 z_{12}^{i_0}z_{13}^{i_1+1}z_{14}^{i_2}z_{23}^{i_3}z_{24}^{i_4}z_{34}^{i_5-1}.
\]
Again we write
\[
z_I \mapsto -ti_4 z_{I+(1,0,0,0,-1,0)}-i_5z_{I+(0,1,0,0,0,-1)}.
\]
and the corresponding operator is given by
\[
\sum_{I\in\mathcal{E}} a_I \left( -ti_4~\frac{\p}{\p a_{I+(1,0,0,0,-1,0)}}-i_5~\frac{\p}{\p a_{I+(0,1,0,0,0,-1)}}\right).
\]
The operator degenerates to 
\[
-\sum_{I\in\mathcal{E}} a_I i_5~\frac{\p}{\p a_{I+(0,1,0,0,0,-1)}}
\]
as $t\mapsto 0$. Note that $\Phi(I+(0,1,0,0,0,-1))=\Phi(I)+(0,1,1,1)$. Also, if $\Phi(I)=\Phi(K)$, then $i_5=k_5$. So the operator becomes (under $\Phi^t$)
\[
-\sum_{I\in\mathcal{E}} i_5a_I\frac{\p}{\p b_{\Phi(I)+\alpha}}, \quad \alpha=(0,1,1,1).
\]
Since $\alpha$ satisfies $\langle \alpha,\delta(e_6)\rangle=-1$, $\langle \alpha,\delta(e_2)\rangle=\langle \alpha,\delta(e_3)\rangle=1$, and $\langle \alpha,\delta(e_i)\rangle=0$ for other $i$, it is also a root with $\rho_\alpha=\delta(e_6)=(0,0,0,-1)$ and the corresponding operator in the extended GKZ system on $Y$ is
\begin{align*}
Z_\alpha&=\sum_{J\in\Delta\cap M} \langle \rho_\alpha, J-\alpha \rangle b_J \frac{\p}{\p b_{J+\alpha}}\\
&=\sum_{J\in\Delta\cap M}  \langle (0,0,0,-1), (j_1,j_2-1,j_3-1,j_4-1) \rangle b_J \frac{\p}{\p b_{J+\alpha}}.
\end{align*}
Note that  $1-j_4=i_5$ and thus the operator $Z_{\alpha}$ is equivalent to the operator $Z(x)$ under the transformation $\Phi^t$ and $t\mapsto 0$.

Again, we remark that 
as $t\to 0$, the action becomes
\[
z_{12}\mapsto z_{12},~z_{13}\mapsto  z_{13},~z_{14}\mapsto z_{14}, z_{23}\mapsto z_{23},~z_{24}\mapsto z_{24},~z_{34}\mapsto z_{34}-sz_{13}.
\]
 and in terms of the coordinate $w_i$'s, it is given by
\[
w_6\mapsto w_6-sw_2w_3,~~w_i\mapsto w_i~~\text{for other $i$},
\]
which corresponds to the root $(0,1,1,1)$.

\subsubsection{Table for equivalence}\label{table}  We can check all generators $x\in \fg=\fsl(4,\mathbb{C})$ similarly and get the correspondence between the symmetry operators in the tautological system on $X$ and the operators in the extended GKZ system on $Y$ as follows.

\begin{center}
\begin{tabular}{|c|l|l|}
\hline  
  $\fsl_4$-actions & Corresponding Torus actions/Root actions \\\hline  
  $E_{11}-E_{22}$   & Torus action $(-1,0,2,-1)\in M$    \\\hline
  $E_{11}-E_{33}$ & Torus action $(1,-1,1,1)\in M$    \\\hline
  $E_{11}-E_{44}$ & Torus action $(0,1,1,0)\in M$    \\\hline
  $E_{12}$ & Root action $(0,0,1,0)$     \\\hline
  $E_{13}$ & Root action $(0,0,1,1)$    \\\hline
  $E_{14}$ & Root action $(0,1,1,1)$   \\\hline
  $E_{23}$ & Root action $(0,0,0,1)$    \\\hline
  $E_{24}$ & Root action $(0,1,0,1)$   \\\hline
  $E_{34}$ & Root action $(0,1,0,0)$   \\\hline
  $E_{21}$ & Root action $(0,0,-1,0)$     \\\hline
  $E_{31}$ & Root action $(-1,0,-1,0)$     \\\hline
  $E_{41}$ & Root action $(-1,-1,-1,0)$     \\\hline
  $E_{32}$ & Root action $(-1,0,0,0)$     \\\hline
  $E_{42}$ & Root action $(-1,-1,0,0)$     \\\hline
  $E_{43}$ & Root action $(0,-1,0,0)$     \\\hline
\end{tabular}\\
\end{center}

\begin{remark}
For $Y$, there are 14 roots:
\begin{align*}
&(-1,0,0,0),~~(-1-1,0,0),~~(-1,0,-1,0),~~(-1-1-1,0),~~(-1,-1,-1,-1)\\
&(0,-1,0,0),~~(0,0,-1,0),~~(0,0,1,0),~~(0,1,0,0)\\
&(0,1,1,1),~~(0,1,0,1),~~(0,0,1,1),~~(0,0,0,1),~~(1,1,1,1)
\end{align*}
and among them, the roots $(1,1,1,1)$ and $(-1,-1,-1,-1)$ are not in the table. 
\end{remark}

\subsection{Characterization of the missing roots}

\begin{lem}
The root automorphisms of $Y$ corresponding to the roots $\alpha_1:=(1,1,1,1)$ and $\alpha_2:=(-1,-1,-1,-1)$ are characterized by those root automorphisms which do not leave the fibers of $\psi:Y\to\overline{X}$ over the maximal singular cone invariant.
\end{lem}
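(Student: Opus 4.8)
The plan is to identify, for each of the fourteen roots $\alpha$, the explicit one-parameter automorphism $y_\alpha(\lambda)$ of $Y$ in the homogeneous coordinates $w_1,\dots,w_6$, and then to check which of these descends to an automorphism of $\overline{X}=P(2,4)$ that preserves the two singular maximal cones $\sigma_1=\{e_1,e_2,e_3,e_4,e_5\}$ and $\sigma_2=\{e_2,e_3,e_4,e_5,e_6\}$ together with the fibers of $\psi$ over their (unique codimension-three) strata. Recall from Section~1.1 that for $\alpha\in R(\Sigma',N)$ with distinguished ray $\rho_\alpha=\delta(e_j)$ the action is $w_j\mapsto w_j+\lambda\,w^D$, $w_i\mapsto w_i$ for $i\neq j$, where $w^D=\prod_{\rho\neq\rho_\alpha}w_\rho^{\langle\alpha,\rho\rangle}$. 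So the computation for each root reduces to: (i) find the unique $\rho$ with $\langle\alpha,\rho\rangle=-1$, and (ii) read off the monomial $w^D$.

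First I would carry out step (i)--(ii) for $\alpha_1=(1,1,1,1)$ and $\alpha_2=(-1,-1,-1,-1)$. Pairing against $\delta(e_1)=(1,0,0,0),\dots,\delta(e_6)=(0,0,0,-1)$, one finds $\langle\alpha_1,\delta(e_6)\rangle=-1$ and all other pairings $\geq 0$, so $\alpha_1$ gives $w_6\mapsto w_6+\lambda\,w_1w_2w_3w_4w_5$; similarly $\langle\alpha_2,\delta(e_1)\rangle=-1$, giving $w_1\mapsto w_1+\lambda\,w_2w_3w_4w_5w_6$. The crucial observation is that on the affine chart $U_{\sigma_1}$ of $\Sigma'$ the variables $w_1,\dots,w_5$ are the local coordinates (only $w_6$ is inverted), so the monomial $w_1w_2w_3w_4w_5$ is \emph{not} a regular function pulled back from $\overline X$ in any way compatible with $\psi$: on $\overline X$ the cone $\sigma_1$ of $\Sigma$ is singular and the corresponding affine piece uses $w_1,w_2w_3,w_2w_5,w_3w_4,w_4w_5$, i.e. the \emph{products}. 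Thus $y_{\alpha_1}(\lambda)$ mixes $w_6$ with the exceptional directions of the small resolution over $\sigma_1$, and does not descend to leave the singular stratum (or equivalently the $\psi$-fiber over it) invariant. The symmetric statement holds for $\alpha_2$ and the cone $\sigma_2$ (here $w_2,\dots,w_6$ are the local coordinates and $w_1$ is inverted).

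Conversely I would argue that \emph{every} other root does preserve those fibers. Twelve of the fourteen roots appear in the table of Section~\ref{table} as degenerations of $\fsl_4$-actions on $G(2,4)$; since the $\fsl_4$-action on the total space of the degeneration \eqref{degcoor} restricts fiberwise (this is exactly the adjustment-by-$t$ device used in \S1.3.3's off-diagonal case (B)), each of these twelve preserves the fibers of the family $\mathfrak{X}\to\bA^1$, hence in particular acts on $\overline X=\mathfrak X_0$, and one checks from the explicit $w$-formulas already recorded there (e.g. $w_4\mapsto w_4+sw_2$, $w_6\mapsto w_6-sw_2w_3$) that the corresponding monomial $w^D$ only involves variables that remain coordinates on a neighborhood of the singular strata, so the automorphism restricts to the fibers of $\psi$ over $\sigma_1,\sigma_2$. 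The cleanest way to package this is: a root automorphism $y_\alpha(\lambda)$ preserves the $\psi$-fiber over the stratum of $\sigma_i$ if and only if the monomial $w^D$ lies in the subring of $S$ generated by the variables that are regular on $U_{\sigma_i}\subset\overline X$ (pulled back via $\psi$); for $\alpha\neq\alpha_1,\alpha_2$ this holds for both $i$, while for $\alpha_1$ (resp.\ $\alpha_2$) it fails at $\sigma_1$ (resp.\ $\sigma_2$).

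The main obstacle is making the phrase ``leave the fibers of $\psi$ over the maximal singular cone invariant'' precise and checking the converse direction uniformly rather than root-by-root: one must pin down that $\psi$ over the orbit closure $V(\sigma_i)$ has fiber $\bP^1$ (the exceptional locus $Z$ of Theorem~\ref{main2}), describe that $\bP^1$ in the $w$-coordinates of the refined chart, and verify that the candidate automorphism either acts trivially on it or moves it off $V(\sigma_i)$. I expect the bookkeeping — rather than any deep idea — to be the only real work, and it can be organized entirely through the divisor-class bigrading $\deg w_2=\deg w_4=(1,0)$, $\deg w_3=\deg w_5=(0,1)$, $\deg w_1=\deg w_6=(1,1)$ from Lemma~\ref{lem}, since $w^D$ and $w_{\rho_\alpha}$ always have the same bidegree and that constrains which monomials can occur.
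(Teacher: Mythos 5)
There is a genuine computational error at the heart of your argument. For $\alpha_1=(1,1,1,1)$ the pairings with the six rays are $\langle\alpha_1,\delta(e_1)\rangle=1$, $\langle\alpha_1,\delta(e_i)\rangle=0$ for $i=2,3,4,5$, and $\langle\alpha_1,\delta(e_6)\rangle=-1$; since $w^D=\prod_{\rho\neq\rho_\alpha}w_\rho^{\langle\alpha,\rho\rangle}$, the monomial is $w^D=w_1$, not $w_1w_2w_3w_4w_5$ (you appear to have raised every variable to the power $1$ instead of to the power $\langle\alpha,\rho\rangle$). So the automorphism is $w_6\mapsto w_6+\lambda w_1$, and symmetrically $\alpha_2$ gives $w_1\mapsto w_1+\lambda w_6$. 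Your own closing sanity check would have flagged this: $\deg(w_1w_2w_3w_4w_5)=(3,3)\neq(1,1)=\deg(w_6)$. The error is not cosmetic, because your explanation of why $\alpha_1$ moves the fiber rests on the claim that the monomial ``is not a regular function pulled back from $\overline X$''; the actual monomial $w_1=z_{12}$ is a perfectly good coordinate on $\overline X\subset\mathbb{P}^5$, so that reasoning collapses. Likewise your packaged criterion (``$y_\alpha$ preserves the fiber iff $w^D$ lies in the subring generated by variables regular on $U_{\sigma_i}$'') would wrongly classify $\alpha_1,\alpha_2$ as fiber-preserving once the correct monomials are used.

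The mechanism the paper uses is different and you should adopt it: the fiber of $\psi$ over the distinguished point of the singular cone $\{\delta(e_2),\ldots,\delta(e_6)\}$ is the locus $\{w_2=w_4=w_6=0\}$ in $\bbC^6-Z(\Sigma')$, and on that locus the irrelevant ideal $I(\Sigma')$ forces $w_1\neq 0$. Hence $w_6\mapsto w_6+\lambda w_1$ translates $w_6$ by a nowhere-vanishing function on the fiber and therefore moves it off $\{w_6=0\}$; dually, $w_1\mapsto w_1+\lambda w_6$ moves the locus $\{w_1=w_2=w_4=0\}$ over the other singular cone, where $w_6\neq0$. Conversely, for each of the remaining twelve roots the relevant monomial $w^D$ (e.g.\ $w_2$ for the root $(0,0,1,0)$, $w_2w_3$ for $(0,1,1,1)$) either is divisible by $w_2$ or $w_4$, hence vanishes on these loci, or else the modified variable $w_{\rho_\alpha}$ is not among $w_1,w_2,w_4,w_6$, so the defining equations of the fibers are untouched; either way the fiber is preserved. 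Your overall strategy --- a root-by-root check in the homogeneous coordinates $w_1,\ldots,w_6$ --- is exactly the paper's, but the decisive step must be the vanishing (or not) of $w^D$ on the explicit fiber locus, not regularity of $w^D$ on an affine chart of $\overline X$.
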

\begin{proof}
Recall that the quotient construction of $Y$  is 
$
\mathbb{C}^6 -Z(\Sigma')/H  
$
where $H$ is the character group of the cokernel of $M\hookrightarrow \mathbb{Z}^{\Sigma(1)}$ and
\[
Z(\Sigma')= \mbox{the zero locus of}\ \langle w_1w_3,w_1w_5,w_2w_3, w_2w_5, w_3w_4, w_4w_5, w_6w_3,w_6w_5\rangle.
\]
The root $(1,1,1,1)$ corresponds to the one-parameter automorphism (on $S$)
\[
w_6\mapsto w_6-\lambda w_1,~~~w_i\mapsto w_i,~~i\neq 6.
\]
Since $Y$ is a geometric quotient, the pre-image of the exceptional locus in $\bbC^6-Z(\Sigma')$ is given by the equation $w_2=w_4=0$ and the set given by $w_2=w_4=w_6=0$ is exactly the fiber over the point $\{\delta(e_2),\delta(e_3),\delta(e_4),\delta(e_5),\delta(e_6)\}$ in $\overline{X}$. This locus is moved by the action corresponding to $(1,1,1,1)$.

One can check that the automorphism given by $(-1,-1,-1,-1)$ does not fix the locus $w_1=w_2=w_4=0$ and all the other automorphisms leave these fibers invariant.
\end{proof}

In fact, $\alpha_1$, $\alpha_2$ are also characterized by ``moving" the singular locus of $\overline{X}$. Hence we can restate the lemma as follows.
\begin{cor}\label{root}
The root automorphisms of $Y$ corresponding to the roots $\alpha_1:=(1,1,1,1)$ and $\alpha_2:=(-1,-1,-1,-1)$ are characterized by those root automorphisms which do not leave the fibers of $\psi$ invariant.
\end{cor}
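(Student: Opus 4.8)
Here is how I would approach the Corollary.

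The plan is to deduce the Corollary from the Lemma by pinning down the phrase ``leave the fibers of $\psi$ invariant'' and matching it against the condition already analyzed there. First I would record two structural facts about $\psi\colon Y\to\overline{X}$: it is an isomorphism over $\overline{X}\setminus\sing(\overline{X})$, and $\sing(\overline{X})$ is a single torus-invariant rational curve, namely the closure of the one-dimensional orbit attached to the non-simplicial cone $\tau=\langle\delta(e_2),\delta(e_3),\delta(e_4),\delta(e_5)\rangle$, which is the common three-dimensional face of the two singular maximal cones $\{e_1,e_2,e_3,e_4,e_5\}$ and $\{e_2,e_3,e_4,e_5,e_6\}$. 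Consequently its two torus-fixed points are exactly the distinguished points $x_{\pi_{34}}$ and $x_{\pi_{12}}$ of those maximal cones, every positive-dimensional fiber of $\psi$ lies over this curve, and the two exceptional $\mathbb{P}^1$'s appearing in the proof of the Lemma---the images in $Y$ of $\{w_2=w_4=w_6=0\}$ and $\{w_1=w_2=w_4=0\}$---are precisely the $\psi$-fibers over $x_{\pi_{12}}$ and over $x_{\pi_{34}}$.

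Next I would install the dictionary. A root automorphism $y_\alpha(\lambda)$ of $Y$ is connected to the identity, hence acts trivially on $N^1(Y)$, hence preserves the class of the $\psi$-contracted curves and descends to an automorphism $\bar y_\alpha(\lambda)$ of $\overline{X}$ with $\psi\circ y_\alpha(\lambda)=\bar y_\alpha(\lambda)\circ\psi$; equivalently, one checks as in the Lemma that $y_\alpha(\lambda)$ preserves $\mathbb{C}^6-Z(\Sigma)$ along with $\mathbb{C}^6-Z(\Sigma')$. Since an automorphism carries $\sing(\overline{X})$ onto itself, $y_\alpha(\lambda)$ maps $\psi^{-1}(p)$ onto $\psi^{-1}(\bar y_\alpha(\lambda)(p))$ for every $p$, so it stabilizes the exceptional $\mathbb{P}^1$ over $x_{\pi_{12}}$ (resp.\ over $x_{\pi_{34}}$) if and only if $\bar y_\alpha(\lambda)$ fixes $x_{\pi_{12}}$ (resp.\ $x_{\pi_{34}}$), i.e.\ does not move that torus-fixed point of the singular locus. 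This is exactly the condition ``leave the $\psi$-fibers over the maximal singular cones invariant'' of the Lemma, and $x_{\pi_{12}},x_{\pi_{34}}$ being the only torus-fixed points on $\sing(\overline{X})$, ``not moving the singular locus'' means precisely ``fixing both of them''. With this dictionary the Corollary is the Lemma restated: the Lemma's computation that every root but $\alpha_1,\alpha_2$ stabilizes both $\{w_2=w_4=w_6=0\}$ and $\{w_1=w_2=w_4=0\}$ shows that their induced automorphisms of $\overline{X}$ fix $x_{\pi_{12}}$ and $x_{\pi_{34}}$, while $\bar y_{\alpha_1}(\lambda)$ (given on $S$ by $w_6\mapsto w_6-\lambda w_1$) moves $x_{\pi_{12}}$ and $\bar y_{\alpha_2}(\lambda)$ (given by $w_1\mapsto w_1-\lambda w_6$) moves $x_{\pi_{34}}$.

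The step I expect to be the real point is the scope of the word ``fibers''. Taken literally as ``every fiber of $\psi$'', the assertion would overreach: the torus action attached to $E_{11}-E_{33}$, for instance, has cocharacter not contained in the span of $\delta(e_2),\dots,\delta(e_5)$, so it acts on $\sing(\overline{X})\cong\mathbb{P}^1$ through a nontrivial $\mathbb{C}^\times$ and hence moves the generic fiber of $\psi$ while fixing only the two special ones. So I would be careful to specify that the fibers in question are the torus-fixed (conifold) ones $\psi^{-1}(x_{\pi_{12}})$ and $\psi^{-1}(x_{\pi_{34}})$---the exceptional curves over the two conifold torus-fixed points---exactly as in the Lemma. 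Once this is granted, the argument above requires no computation beyond what the Lemma already supplies.
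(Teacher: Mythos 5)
Your argument is correct and is essentially the paper's own: the corollary is explicitly presented as a restatement of the preceding Lemma, whose proof already contains the computation you invoke (the actions of $\alpha_1,\alpha_2$ move the exceptional fibers $\{w_2=w_4=w_6=0\}$ and $\{w_1=w_2=w_4=0\}$ while the remaining roots preserve them), and your descent dictionary between fibers of $\psi$ and points of $\sing(\overline{X})$ merely makes the paper's implicit identification precise. Your closing caveat about the scope of ``fibers'' is harmless but not actually needed: the characterization quantifies only over \emph{root} automorphisms, not torus elements, and every root other than $\alpha_1,\alpha_2$ in fact preserves \emph{every} positive-dimensional fiber of $\psi$, since $y_\alpha(\lambda)$ alters only the single coordinate $w_{\rho_\alpha}$ and in each case either fixes the exceptional locus $\{w_2=w_4=0\}$ pointwise or moves points only within fibers, leaving $[w_1:w_6]$ unchanged.
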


\begin{remark}
For $\alpha\in R(\Sigma,N)-\{\alpha_1,\alpha_2\}$, let $g_\alpha$ be the corresponding (one paramter) automorphisms on $Y$ and $Z$ be the exceptional locus of the resolution $Y\to \overline{X}$. For a smooth section $f_0$ in $H^0(Y, -K_Y)$ and a cycle $\gamma\in H_3(Y_{f_0},Z\cap Y_{f_0})$. Let $\tau(\gamma)$ be a small topological $S^1$-bundle of $\gamma$ around $Y_{f_0}$.  The {\it relative periods}
\[
\int_{\gamma}\Res\frac{\Omega}{f}=\int_{\tau(\gamma)}\frac{\Omega}{f}
\]
is $g_\alpha$-invariant. Here $f\in H^0(Y,-K_Y)$. As we point out earlier,
\[
Z_\alpha \int_{\gamma}\Res\frac{\Omega}{f}=Z_\alpha\int_{\tau(\gamma)}\frac{\Omega}{f}=\int_{\tau(\gamma)}Z_\alpha\frac{\Omega}{f}
\]
since $g_\alpha$ fixes $Z$. 
\end{remark}

\section{Tautological systems on smooth toric varieties}
We try to propose a construction of the tautological system on a smooth toric variety with the base-point free anti-canonical divisor such that it is possible to study the B-model for Calabi-Yau {\it complete intersections} in the toric variety $\hat{P}(k, n)$. In this section, $Y$ will be a projective smooth toric variety defined by a fan $\Sigma$ with $-K_Y$ being base-point free (or big and nef).

\subsection{Tautological system $\tau_Y(\aut^0(Y),-K_Y,\beta)$}\label{natural TS}
Let $G=\aut^0(Y)$ and $\beta=(0;\id)$.  The $G$-equivariant base-point free anti-canonical line bundle $-K_Y\to Y$ gives a natural $G$-representation $V:=H^0(Y,-K_Y)^*$ and thus gives rise to a $G$-equivariant map $Y\rightarrow \mathbb{P}V$. Let $\bbC^\cross$ act on $V$ by scaling and then $V$ becomes a $\hat{G}(:=G\times\bbC^\cross)$-module. Denote by $Z:\hat{\mathfrak{g}}\to\End(V)$ the corresponding Lie algebra homomorphism. Let $\hat{Y}$ be the cone over the image under the map and $I:=I(\hat{Y},V)\subset\bbC[V]$ be its homogeneous ideal. The tautological system is defined to be the quotient $D$-module
\[
\tau_Y(G,-K_Y,\beta):=D_{V^*}/D_{V^*}\langle p(\p_\zeta):p(\zeta)\in I\rangle+D_{V^*}\langle Z(x)+\beta(x):x\in \hat{\fg}\rangle.
\]

\begin{prop}\label{1st-iso}
The tautological system $\tau_Y(G,-K_Y,\beta)$ is equivalent to the extended GKZ system on $Y$.
\end{prop}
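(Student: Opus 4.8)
The plan is to match the two $D$-modules generator-by-generator, using the explicit description of the extended GKZ system developed above together with the classification of $\aut^0(Y)$ in terms of the torus and the one-parameter root subgroups $y_\alpha(\lambda)$. First I would unwind the definition of $\tau_Y(\aut^0(Y),-K_Y,\beta)$ into three blocks of operators: (i) the torus part, coming from the maximal torus $T_N\subset\aut^0(Y)$ acting on $V=H^0(Y,-K_Y)^*$; (ii) the root part, coming from the Lie algebra elements $x_\alpha$ generating $y_\alpha(\lambda)$ for $\alpha\in R(\Sigma,N)$; and (iii) the scaling part, coming from the extra $\bbC^\cross$-factor together with $\beta=(0;\id)$. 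On the GKZ side the operators are: the box operators $\prod b_\mu^{\ell_\mu^+}-\prod b_\mu^{\ell_\mu^-}$ indexed by the lattice relations among the anticanonical monomials (equivalently, the polynomial operators $p(\partial_\zeta)$ for $p\in I(\hat Y,V)$ after passing to Fourier-Laplace dual, since the toric ideal of the anticanonical embedding is exactly generated by these binomials), the Euler/homogeneity operators from the torus, and the $Z_\alpha$ operators.

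The key steps, in order. First, identify the polynomial block: the homogeneous ideal $I(\hat Y,V)$ of the cone over the image of $Y$ under the $-K_Y$-embedding is the toric (lattice) ideal associated to the point configuration $\Delta\cap M$; its binomial generators, under $\partial_\zeta\leftrightarrow$ multiplication, become precisely the GKZ box operators. This is the standard toric dictionary and I would cite it rather than reprove it. Second, identify the torus block: for $x$ in the Lie algebra of $T_N$, the endomorphism $Z(x)\in\End(V)$ is diagonal in the monomial basis $\{t^\mu\}_{\mu\in\Delta\cap M}$ with eigenvalue $\langle m_x,\mu\rangle$ for the corresponding cocharacter, so $Z(x)+\beta(x)$ is exactly the GKZ homogeneity operator $\sum_\mu\langle m_x,\mu\rangle\, b_\mu\partial_{b_\mu}$ (plus the constant from $\beta$); the computation is the same linear-algebra one already carried out in \S\ref{symmetry} for $G(2,4)$. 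Third, identify the root block: for the one-parameter subgroup $y_\alpha(\lambda)$, the explicit action $f\mapsto f(b,\lambda)=\sum_\mu b_\mu t^\mu(1+\lambda t^\alpha)^{\langle\mu,\rho_\alpha\rangle}$ derived in the discussion preceding Definition 1.7 shows that the induced infinitesimal action on the coefficients $b_\mu$ is $\sum_\mu \langle\mu,\rho_\alpha\rangle\, b_\mu\,\partial_{b_{\mu+\alpha}}$, and combined with the infinitesimal change of $dm_\lambda$ (contributing the shift by $\langle\alpha,\rho_\alpha\rangle=-1$) this is exactly $Z_\alpha=\sum_\mu\langle\rho_\alpha,\mu-\alpha\rangle b_\mu\partial_{b_{\mu+\alpha}}$. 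Fourth, the scaling generator in $\hat{\fg}$ together with $\beta=(0;\id)$ produces the remaining Euler operator $\sum_\mu b_\mu\partial_{b_\mu}+1$, the overall degree operator. Finally, since the one-parameter subgroups $y_\alpha(\lambda)$ together with $T_N$ generate $\aut^0(Y)$ (the theorem of Cox recalled above), the listed operators generate the whole Lie-algebra block, so the two left ideals in $D_{V^*}$ coincide and the $D$-modules are equal.

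The main obstacle I expect is not any single step but making the third step fully rigorous in the generality "$-K_Y$ base-point free" rather than very ample: when $-K_Y$ is only base-point free (and big and nef) the map $Y\to\bbP V$ need not be an embedding, so "the cone $\hat Y$ over the image" and its ideal $I(\hat Y,V)$ must be interpreted as the affine cone over the (possibly strictly smaller) image variety, and one must check that its toric ideal is still generated by the binomials coming from all lattice relations on $\Delta\cap M$ — i.e. that nothing is lost by the failure of projective normality/embedding. Here Lemma \ref{lem}-type normality of $\Delta$ (and more generally the fact that $\Delta\cap M$ generates the relevant lattice and that the anticanonical ring is generated in degree one for the toric varieties at hand) is what rescues the argument; I would isolate this as the one point needing care and handle it via the standard fact that for a nef big toric divisor the section ring is the semigroup ring of the polytope, so the image is $\operatorname{Proj}$ of that ring and its ideal is the toric ideal of $\Delta\cap M$. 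Everything else is a routine matching of coefficients identical in form to the $(2,4)$ computations already displayed.
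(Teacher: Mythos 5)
Your proposal is correct and follows essentially the same route as the paper: it matches the torus block to the first-order GKZ operators, identifies $Z(x_\alpha)$ with $Z_\alpha$ by combining the action on $f$ with the Jacobian correction coming from the frame change (the paper's $(\star_\lambda)$ factor, which is exactly your $dm_\lambda$ contribution), equates the toric ideal's binomial generators with the box operators, and lets the scaling action supply the Euler operator. Your added caution about the base-point-free (non-embedding) case and the role of polytope normality is a reasonable refinement of a point the paper passes over quickly, but it does not change the structure of the argument.
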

\begin{proof}
Recall that $H^0(Y, -K_Y)$ has a natural basis given by $t^\mu$ in the torus coordiantes with $\mu\in \Delta_{-K_Y}$ and $g\in G$ acts on $\sigma\in H^0(Y, -K_Y)$ by $(g\cdot\sigma)(x):=g(\sigma(g^{-1}x))$, $x\in Y$.  Hence, for $s=(s_1,\cdots, s_n)\in T_N\subset G$, 
\[
s\cdot t^\mu=s^{-\mu}t^\mu
\]
and thus the infinitesimal version of $T_N$-action gives the PDEs
\[
-\sum_\mu \langle \mu,e_j\rangle b_\mu\frac{\p}{\p b_\mu},~j=1,\cdots,n,
\]
which are the operators of order one in the GKZ system on $Y$. 

Next, let $\alpha\in R(\Sigma, N)$ be a root and $\lambda\in \mathbb{C}$. For convenience, we put $\eta_i=t_i(1+\lambda t^\alpha)^{\langle e_i,\rho_\alpha\rangle}$ and use $t_i(\p/\p t_i)$ as our $T_N$-invariant frame. 
For $\lambda$ sufficiently small, one computes
\begin{align*}
t_i\frac{\p}{\p t_i}&=\sum_j t_i\frac{\p \eta_j}{\p t_i}\frac{\p}{\p \eta_j}\\
&=\eta_i\frac{\p}{\p\eta_i}+\frac{\lambda\alpha_i\langle e_i,\rho_\alpha\rangle t^\alpha}{1+\lambda t^\alpha}\eta_i\frac{\p}{\p \eta_i}+\sum_{j\neq i}\frac{\lambda\alpha_i\langle e_j,\rho_\alpha\rangle t^\alpha}{1+\lambda t^\alpha}\eta_j\frac{\p}{\p \eta_j}
\end{align*}
Then, we obtain
\[
(\star_\lambda)\bigwedge_{i=1}^n \eta_i\frac{\p}{\p \eta_i} = \bigwedge_{i=1}^n t_i\frac{\p}{\p t_i},
\]
where 
\[
(\star_\lambda)=1+\sum_{i=1}^n\frac{\lambda\alpha_i\langle e_i,\rho_\alpha\rangle t^\alpha}{1+\lambda t^\alpha}+\text{higher order terms in}\ \lambda.
\]
For a section $t^\mu$, $g_{\alpha,\lambda}\cdot t^\mu = (\star_\lambda)t^\mu (1+\lambda t^\alpha)^{-\langle \mu,\rho_\alpha\rangle}$, so the infinitesimal version is
\[
t^\mu\mapsto \sum_{i=1}^n \alpha_i\langle e_i,\rho_\alpha\rangle t^{\alpha}t^\mu-\langle \mu,\rho_\alpha\rangle t^\alpha t^\mu=-\langle\mu-\alpha,\rho_\alpha\rangle t^{\mu+\alpha}.
\]
and thus gives us the operator
\[
-\sum_\mu\langle\mu-\alpha,\rho_\alpha\rangle b_\mu\frac{\p}{\p b_{\mu+\alpha}},
\]
which coincides with $Z_\alpha$ in the extended GKZ system on $Y$.

For the polynomial operators in $\tau_Y(\aut^0(Y),-K_Y,\beta)$, we recall that the ideal of the image of $Y$ under the morphism determined by $-K_Y$ is generated by its lattice relations.  More precisely, put $\sA=\{(\mu,1):\mu\in \Delta_{-K_Y}\}$ and $q=|\sA|$. Via $e_i\mapsto (\mu_i,1)\in \sA\subset M\times\bbZ$, it gives an exact sequence $0\to L\to \mathbb{Z}^q\to M\times\bbZ$.  For $l\in L$, we have $\sum l_i=0$ and $\sum l_i \mu_i=0$. We write $l=l^+-l^-$ with $l^\pm\in(\mathbb{N}\cup\{0\})^q$. The (homogeneous) ideal of the image of $Y$ under the morphism determined by $-K_Y$ is generated by $z^{l^+}-z^{l^-}$. Hence, the box operators in the extended GKZ system and the polynomial operators in the tautological system are the same.

Finally, note that the Euler operator comes from the scaling action.  Hence the proof is complete.
\end{proof}

\subsection{Tautological system $\tau_Y(\aut_g(S), -K_Y, \beta_\chi)$}\label{general TS} 
For the flexibility (since in general $\sL$ is not $\aut^0(Y)$-linearizable, e.g., $\sO(1)\to\mathbb{P}^n$ is not $PGL_n$-linearizable), 
we consider $G=\aut_g(S)$ and $V=H^0(Y, -K_Y)^*$. Recall that for the smooth projective toric variety $Y$, if $D=\sum_{\rho\in \Sigma(1)} a_\rho D_\rho$ is a $T_N$-invariant Weil divisor with $d=[D]\in \cl(Y)$, then the map 
$$
H^0(Y,\sO_Y(D))\to S_d,~~ t^\mu\mapsto \prod_\rho w_\rho^{\langle \mu,\rho\rangle+a_\rho}
$$ 
establishes an isomorphism. Hence, instead of $\aut^0(Y)$, there is a natural $G$-action on $H^0(Y,\sO_Y(D))$, in particular for $D=-K_Y$. Let $r:=|\Sigma(1)|$. We can define an equivalence relation on $\Sigma(1)$: 
\[
\rho\sim\rho' \Leftrightarrow \deg(w_\rho)=\deg(w_{\rho'})\in A_{n-1}(Y).
\]
It gives a partition $\Sigma(1)=\Sigma_1\cup\cdots\cup\Sigma_h$, where $\Sigma_i$ consists of the variables of degree $d_i$. Write $S_i=S_{d_i}=S_{d_i}'\oplus S_{d_i}''$, where $S_{d_i}'$ is the space spanned by $w_\rho$ for $\rho\in\Sigma_i$ and $S_{d_i}''$ is spanned by the remaining monomials. Recall that we have a (split) exact sequence of groups (Equation ({\sc{e7}}), \cite{C2014}):
\[
0\to 1+\mathcal{N}\to\aut_g(S)\to \prod_{i=1}^h \gl(S_i')\to 0.
\]
Let $\chi:\aut_g(S)\to\bbC^\cross$ be the pull-back of the product of determinant functions on $\gl(S_i')$, which, when restricts on the maximal torus $(\bbC^\cross)^r\subset\aut_g(S)$, is given by
\[
\left.\chi\right|_{(\bbC^\cross)^r}:(\lambda_1,\cdots,\lambda_r)\mapsto \lambda_1\cdots\lambda_r.
\]
 Let $\beta_\chi:\hat{\fg}\to\bbC$ be a Lie algebra homomorphism given by $(d\chi;\id)$ on $\fg\times\bbC$. 
\begin{thm}\label{general on Y}
The tautological system $\tau_Y(G, -K_Y, \beta_\chi)$ is equivalent to the extended GKZ system on $Y$.
\end{thm}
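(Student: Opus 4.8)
The plan is to follow the template of Proposition~\ref{1st-iso} and isolate only the features of the action of the larger group $G = \aut_g(S)$ that differ from $\aut^0(Y)$. The key observation is that the three generating pieces of a tautological system --- the polynomial operators, the first order symmetry operators, and the Euler operator --- each need to be matched, and since the underlying line bundle $-K_Y$ and the cone $\hat Y \subset V$ are unchanged, the polynomial operators $p(\p_\zeta)$ with $p \in I(\hat Y, V)$ are literally the same as in Proposition~\ref{1st-iso}, hence coincide with the box operators of the extended GKZ system. So the content of the theorem is entirely in the first order part, where we must check that enlarging $\aut^0(Y)$ to $\aut_g(S)$ and replacing $\beta = (0;\id)$ by $\beta_\chi = (d\chi; \id)$ does not change the $\sD$-module.

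First I would use the split exact sequence $0 \to 1 + \mathcal N \to \aut_g(S) \to \prod_i \gl(S_i') \to 0$ to decompose $\hat{\fg} = \Lie(\aut_g(S)) \times \bbC$ into a direct sum of three kinds of generators: (a) the nilpotent part $\mathcal N$, which gives the root operators $y_\alpha(\lambda)$ for $\alpha \in R(\Sigma, N)$ --- these are handled exactly as in Proposition~\ref{1st-iso} and yield the operators $Z_\alpha$, with the twist $d\chi$ vanishing on $\mathcal N$ since $\chi$ is pulled back from the reductive quotient; (b) the "diagonal" Cartan directions inside $\prod_i \gl(S_i')$, i.e.\ the torus $(\bbC^\cross)^r$; (c) the remaining off-diagonal elements of $\gl(S_i')$ that permute variables $w_\rho, w_{\rho'}$ within a single $\Sigma_i$. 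For (b), the torus $(\bbC^\cross)^r$ of $\aut_g(S)$ is larger than the big torus $T_N$ of $Y$ --- it has rank $r = |\Sigma(1)|$ rather than $n$ --- but the quotient by $H$ recovers $T_N$, and $H$ acts trivially on $H^0(Y,-K_Y)$ after suitable normalization; so the extra $r - n$ directions act by scalars on $V$, and it is precisely here that the character $\chi$ enters: $\left.\chi\right|_{(\bbC^\cross)^r}(\lambda_1,\dots,\lambda_r) = \lambda_1 \cdots \lambda_r$ is chosen so that $Z(x) + \beta_\chi(x)$ for $x$ in the kernel of $(\bbC^\cross)^r \to T_N$ becomes a multiple of the Euler operator, hence already in the ideal and contributing nothing new. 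The surviving torus operators reduce to the $n$ GKZ operators $-\sum_\mu \langle \mu, e_j \rangle b_\mu \p/\p b_\mu$ exactly as before.

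The genuinely new case is (c): an off-diagonal element $w_\rho \mapsto w_\rho + \lambda w_{\rho'}$ with $\rho \sim \rho'$ (same class in $\cl(Y)$) but $\rho, \rho'$ distinct $1$-cones. I would compute its infinitesimal action on the monomial basis $t^\mu \mapsto \prod_\rho w_\rho^{\langle \mu, \rho\rangle + a_\rho}$ of $H^0(Y, -K_Y)$ and show that the resulting first order operator is again of GKZ root-operator type, or is a combination of a $Z_\alpha$ and a GKZ/Euler operator, so that it lies in the $\sD$-module generated by the extended GKZ operators. The point is that $\deg w_\rho = \deg w_{\rho'}$ forces the vector $\alpha := $ (the $M$-functional recording the effect of swapping $w_{\rho'}$ for $w_\rho$) to satisfy $\langle \alpha, \rho \rangle = 1$, $\langle \alpha, \rho' \rangle = -1$ and $\langle \alpha, \rho'' \rangle = 0$ otherwise --- which is exactly the defining condition for $\alpha' := -\alpha$ (or $\alpha$, depending on orientation) to be a root in $R(\Sigma, N)$ with $\rho_{\alpha'} = \rho'$. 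Thus each such operator is already accounted for among the $Z_\alpha$'s, and conversely every root operator arises from some $\mathcal N$- or $\gl(S_i')$-direction. The twist $d\chi$ on these off-diagonal directions is zero (off-diagonal matrices are traceless), so $\beta_\chi$ again changes nothing.

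I expect the main obstacle to be the bookkeeping in case (b): carefully identifying which linear combinations of the $r$ torus operators of $\aut_g(S)$ descend to the $n$ honest GKZ operators on $T_N$ and which become (via the $\chi$-twist) scalar multiples of the Euler operator, and verifying that the normalization of the isomorphism $H^0(Y,\sO_Y(-K_Y)) \cong S_{-K_Y}$ --- i.e.\ the choice of the $a_\rho$ with $-K_Y = \sum a_\rho D_\rho$, here $a_\rho = 1$ for all $\rho$ --- is exactly what makes $\left.\chi\right|_{(\bbC^\cross)^r} = \lambda_1 \cdots \lambda_r$ the right character so that $Z(x) + \beta_\chi(x)$ vanishes modulo the Euler operator on the full kernel of $(\bbC^\cross)^r \twoheadrightarrow T_N$ and on the center. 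Once this compatibility of $\chi$ with the anticanonical normalization is pinned down, the rest follows the proof of Proposition~\ref{1st-iso} verbatim, together with the already-established identity of the polynomial operators with the box operators.
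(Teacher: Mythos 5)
Your proposal is correct and follows essentially the same route as the paper: compute the infinitesimal action of the root one-parameter subgroups to recover the operators $Z_\alpha$, show that the $r$-dimensional torus of $\aut_g(S)$ together with the character $\chi$ (which shifts each torus weight by $+1$) reproduces the $n$ first-order GKZ operators modulo the Euler operator, and reuse the identification of the polynomial operators with the box operators from Proposition \ref{1st-iso}. The only difference is organizational: the paper treats all roots uniformly via Cox's generation theorem rather than splitting them between the unipotent radical $1+\mathcal{N}$ and the off-diagonal part of $\prod_i\gl(S_i')$.
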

\begin{proof}
For a root $\alpha\in R(\Sigma, N)$, the induced one-parameter automorphism on $V^*$ is given by 
\[
w_{\rho_\alpha}\mapsto w_{\rho_\alpha}+\lambda\prod_{\rho\neq \rho_\alpha} w_{\rho}^{\langle \alpha,\rho\rangle},~~ w_{\rho}\mapsto w_{\rho},~\text{for $\rho\neq \rho_\alpha$}
\]
and its infinitesimal version on basis is given by
\begin{align*}
\prod_\rho w_\rho^{\langle \mu,\rho\rangle+1} \mapsto \langle \mu-\alpha,\rho_\alpha\rangle \prod_\rho w_\rho^{\langle \mu+\alpha,\rho\rangle+1}.
\end{align*}
In terms of matrices, it can be represented by $(t^\mu)\mapsto B(t^\mu)$ with $B_{\mu+\alpha,\mu}=\langle \mu-\alpha,\rho_\alpha\rangle$ and $B_{ij}=0$ for other entries. Taking the dual representation, we have $(b_\mu) = B^t (b_\mu)$. Hence this gives the operator 
\[
\sum_\mu \langle \mu-\alpha,\rho_\alpha\rangle b_\mu\frac{\p}{\p b_{\mu+\alpha}}
\]
as expected. Note that if $E_\alpha\subset \fg$ is the corresponding root subspace, then $\beta_\chi(E_\alpha)=0$ by construction. Hence $Z(E_\alpha)+\beta_\chi(E_\alpha)=Z_\alpha$.

For the torus actions, put $x_1=(1,0,\ldots,0)\in \bbC^r$. The one-parameter subgroup gives $w_1\mapsto e^{\lambda} w_1$ and $w_i\mapsto w_i$ for $i\neq 1$. It induces
\[
\prod_\rho w_\rho^{\langle \mu,\rho\rangle+1} \mapsto (e^{\lambda})^{\langle \mu,\rho_1\rangle+1}\prod_\rho w_\rho^{\langle \mu,\rho\rangle+1}.
\]
and the corresponding infinitesimal version is
\[
\prod_\rho w_\rho^{\langle \mu,\rho\rangle+1} \mapsto (\langle \mu,\rho_1\rangle+1)\prod_\rho w_\rho^{\langle \mu,\rho\rangle+1}.
\]
It gives the operator
\[
\sum_\mu (\langle \mu,\rho_1\rangle+1) b_\mu\frac{\p}{\p b_\mu}.
\]
Since $\beta_\chi(x_1)=1$ by construction, 
\[
Z(x_1)+\beta_\chi(x_1)=\sum_\mu (\langle \mu,\rho_1\rangle+1) b_\mu\frac{\p}{\p b_\mu}+1=\sum_\mu \langle \mu,\rho_1\rangle b_\mu\frac{\p}{\p b_\mu}+\sum_\mu b_\mu\frac{\p}{\p b_\mu}+1.
\]
Hence together with the Euler operator, $Z(x_1)+\beta(x_1)$ generates the operator
\[
\sum_\mu \langle \mu,\rho_1\rangle b_\mu\frac{\p}{\p b_\mu}.
\]
Similarly, we can produce 
\[
\sum_\mu \langle \mu,\rho_i\rangle b_\mu\frac{\p}{\p b_\mu},~i=1,\cdots,r.
\]
Since $\rho_i$'s span the full lattice $N$, these can generate all operators
\[
\sum_\mu \langle \mu,e_j\rangle b_\mu\frac{\p}{\p b_\mu},~j=1,\cdots,n,
\]
which are the operators of order one (except for the Euler operator) in the GKZ system.

The polynomial operators can be checked as in Proposition \ref{1st-iso}, so we complete the proof.
\end{proof}
\begin{remark}\label{Euler}
For the case $Y=\hat{P}(2,4)$, since $\rho_i$'s have relations, namely $\rho_1+\rho_2+\rho_4+\rho_6=0$, the auxiliary action of $\bbC^\cross$ is redundant. Indeed, we take $(\lambda,\lambda,1,\lambda,1,\lambda)\in(\bbC^\cross)^6$ and
\[
\sum_{i=1,2,4,6}\sum_\mu (\langle \mu,\rho_i\rangle+1) b_\mu\frac{\p}{\p b_\mu}=4\sum_\mu b_\mu\frac{\p}{\p b_\mu}.
\]
Also, $\beta((1,1,0,1,0,1);0)=4$. This action already generates the Euler operator.
\end{remark}

\subsection{Period integrals of CY complete intersections in $\hat{P}(k,n)$}\label{TS-ci}
Via the toric degeneration, we will propose a way to study the B-model of a Calabi-Yau complete intersection in $\hat{P}(k,n)$ (See \cite{BCKS2000} for the general construction of $\hat{P}(k,n)$.) 

Tautological systems were constructed to govern the period integrals of Calabi-Yau complete intersections in a Grassmannian $X=G(k, n)$ (more generally, a partial flag variety). Suppose that $-K_X$ admits a decomposition $-K_X=L_1+\cdots+L_s$ such that $L_i$'s are $G (=SL_n)$-linearized base-point free line bundles. Let $V_i=H^0(X,L_i)^*$. We have a $G$-map $X\to\mathbb{P}V_1\times\cdots\times\mathbb{P}V_s$. Let $\hat{X}$ be the cone over the image in $V:=V_1\times\cdots\times V_s$ and $I:=I(\hat{X},V)$. Let $G\to\aut(V)$ be the associated representation. We extend this representation to $\hat{G}:=G\times(\bbC^\cross)^s\to\aut(V)$ by letting $i$-th $\bbC^\cross$ act on $V_i$ via scaling. Let $Z:\hat{\fg}=\fg\times\bbC^s\to \End(V)$ be the corresponding Lie algebra homomorphism and $\beta:\hat{\fg}\to\bbC$ be a Lie algebra homomorphism. The general tautological system is defined to be the quotient $D$-module
\begin{align}\label{ctau}
\tau_X(G,L_1,\cdots,L_s,\beta):=D_{V^*}/D_{V^*}\langle p(\p_\zeta):p(\zeta)\in I\rangle+D_{V^*}\langle Z(x)+\beta(x):x\in \hat{\fg}\rangle,
\end{align}
which will govern the period integrals of the Calabi-Yau complete intersections in $X$ defined by $L_i$'s. Note that $X$ has Picard number one and $K_X\isom \sO_X(-n)$, so we can write $L_i=\sO_X(n_i)$ with $\sum_{i=1}^s n_i=n$.

To get an analogue for $Y=\hat{P}(k,n)$, let $G=\aut_g(S)$ and $r=|\Sigma(1)|$. By Proposition \ref{linearization}, $H^0(Y,\sL)$ is a $G$-space. Let $\sL_i=n_i\sL$. By the fact that $-K_Y=n\sL$, we have
\[
-K_Y=\sL_1+\cdots+\sL_s.
\]
Also, $H^0(Y,\sL_i)$ is a $G$-space (It is the reason why we have to pick the group $\aut_g(S)$ in Section \ref{general TS} instead of  $\aut^0(Y)$.) and we can choose integers $a_{ij}$ so that $\sL_i\sim \sum_{j} a_{ij} D_j$ and $\sum_{i} a_{ij}=1$ for all $j$. Based on these choices, the natural multiplication map
\[
H^0(Y, \sL_1)\times\cdots\times H^0(Y, \sL_s)\to H^0(Y, -K_Y)
\]
is just the multiplication in $S$ and is $G$-equivariant when we identify $H^0(Y, \sL_i)$ with its corresponding graded piece of $S$.

Put $\hat{G}=G\times(\bbC^\cross)^s$ and let $i$-th $\bbC^\cross$ act on $H^0(Y, \sL_i)$ via scaling. Let $\beta_\chi=(d\chi;\id,\cdots,\id):\hat{\fg}\to\bbC$ be given as in Section \ref{general TS}. Now we can get the tautological system $\tau_Y(G,\sL_1,\cdots,\sL_s,\beta_\chi)$ as above.
\begin{prop}
$\tau_Y(G,\sL_1,\cdots,\sL_s,\beta_\chi)$ governs the period integrals of the Calabi-Yau complete intersections in $Y$ cut down by $H^0(Y, \sL_i)$.
\end{prop}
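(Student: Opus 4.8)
The plan is to run, for complete intersections, exactly the three-part verification that was used for hypersurfaces in Proposition \ref{1st-iso} and Theorem \ref{general on Y}: check the first-order symmetry operators, the root operators $Z_\alpha$, and the polynomial (box) operators separately against the period integral, using that each $H^0(Y,\sL_i)$ is an $\aut_g(S)$-space (Proposition \ref{linearization}) and that the multiplication map $\prod_i H^0(Y,\sL_i)\to H^0(Y,-K_Y)$ is, under the identifications with graded pieces of $S$ determined by the chosen $a_{ij}$, literally multiplication of monomials. First I would fix generic sections $f_i=\sum_{\mu\in\Delta_{\sL_i}\cap M} b^{(i)}_\mu t^\mu\in H^0(Y,\sL_i)$ cutting out transverse hypersurfaces $Y_i$, write the complete-intersection period as an iterated tube integral
\[
\Pi_\gamma(b)=\int_{\tau(\gamma)}\frac{\Omega}{f_1\cdots f_s},
\]
where $\tau(\gamma)$ is the $(S^1)^{s}$-bundle over a cycle $\gamma$ in $\bigcap_i Y_i$ obtained by Leray's iterated residue (equivalently the Cayley trick), and record that $\Omega$ equals $dm=\bigwedge_i dt_i/t_i$ up to the character $\chi$ under $\aut_g(S)$.

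For the symmetry operators, the $i$-th scaling $\bbC^\cross$ multiplies $f_i$, hence $1/f_i$, by a scalar, so differentiating the invariance of $\Pi_\gamma$ at the identity produces, together with the $d\chi$-contribution on $\Omega$, exactly the $s$ Euler-type operators built into $\beta_\chi=(d\chi;\id,\dots,\id)$; combined with the torus part $t_i\mapsto s_i^{-1}t_i$, which acts on each $b^{(i)}_\mu$ by $-\langle\mu,e_j\rangle$ as in Theorem \ref{general on Y}, these generate precisely the first-order part of the extended GKZ system adapted to $-K_Y=\sL_1+\cdots+\sL_s$. For a root $\alpha\in R(\Sigma,N)$ the computation is unchanged from Theorem \ref{general on Y}: the one-parameter automorphism $y_\alpha(\lambda)$ acts on a monomial $t^\mu$ in whichever $H^0(Y,\sL_i)$ contains it by $t^\mu\mapsto(\star_\lambda)t^\mu(1+\lambda t^\alpha)^{-\langle\mu,\rho_\alpha\rangle}$, its infinitesimal version gives $-\langle\mu-\alpha,\rho_\alpha\rangle t^{\mu+\alpha}$, and since $\beta_\chi$ vanishes on the root subspace $E_\alpha$ one gets $Z(E_\alpha)+\beta_\chi(E_\alpha)=Z_\alpha$; the tube $\tau(\gamma)$ is homologous to its image under $g$ near the identity because $\gamma$ is a cycle, so $Z_\alpha\Pi_\gamma=0$. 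For the polynomial operators, because the image of $Y$ in $\mathbb{P}V_1\times\cdots\times\mathbb{P}V_s$ under the multiplication map has homogeneous ideal generated by the multigraded lattice relations among the monomials $\prod_\rho w_\rho^{\langle\mu,\rho\rangle+a_{i\rho}}$, every generator of $I(\hat Y,V)$ is a binomial $\zeta^{l^+}-\zeta^{l^-}$ with $\sum_k l_k=0$ in each grading and $\sum_k l_k\mu_k=0$, and then $\partial^{l^+}-\partial^{l^-}$ kills $\Pi_\gamma$ term by term in the Laurent expansion of $1/(f_1\cdots f_s)$, exactly as in the proof of Proposition \ref{1st-iso}.

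The genuinely delicate points, and where I expect the real work to lie, are: (i) justifying that the iterated-residue (Cayley-trick) description of $\Pi_\gamma$ is valid and that the associated tube class is $\aut^0$-invariant near the identity, which needs $Y$ smooth, each $\sL_i$ base-point free, and the $Y_i$ meeting transversally for generic $b^{(i)}$ (Bertini); and (ii) checking that the first-order operators produced above really exhaust the first-order part of the complete-intersection extended GKZ system, i.e.\ that the $s$ scalings together with the choice of $a_{ij}$ satisfying $\sum_i a_{ij}=1$ reproduce all the expected resonance relations and no spurious ones, so that no information is lost relative to the hypersurface case (compare Remark \ref{Euler}). Once these are settled, a cleaner alternative route is the Cayley trick itself: the complete intersection in $Y$ becomes an anticanonical hypersurface in the semi-Fano projective bundle $\mathbb{P}_Y(\sL_1^{\vee}\oplus\cdots\oplus\sL_s^{\vee})$, whose homogeneous coordinate ring and graded automorphism group are assembled from those of $Y$, so that Theorem \ref{general on Y} applies verbatim and the equality of period integrals is the standard one; I would include this as a second proof once the bundle's fan and divisor data are written out explicitly.
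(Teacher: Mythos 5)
Your overall architecture matches the paper's proof: fix generic sections $\sigma_i\in H^0(Y,\sL_i)$, write $\Pi_\gamma=\int_{\tau(\gamma)}\frac{1}{\sigma_1\cdots\sigma_s}\,dm$, and verify separately the scaling/Euler operators, the torus operators, the root operators, and the polynomial operators; the scalings, the torus part and the binomials are handled essentially as in the paper. But there is a genuine error in your treatment of the root operators, and it sits precisely at the one point where the complete-intersection case differs from Theorem \ref{general on Y}. You assert that ``the computation is unchanged'' and that the infinitesimal root action on a monomial $t^{\mu}\in H^0(Y,\sL_i)$ is $-\langle\mu-\alpha,\rho_\alpha\rangle t^{\mu+\alpha}$. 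That formula encodes the homogenization $\prod_\rho w_\rho^{\langle\mu,\rho\rangle+1}$ appropriate to $-K_Y$; for a section of $\sL_i\sim\sum_j a_{ij}D_j$ the homogenization is $\prod_j w_j^{\langle\mu_i,\rho_j\rangle+a_{ij}}$, so the root $\alpha$ with $\rho_\alpha=\rho_k$ acts on $V_i$ with weight $\langle\mu_i,\rho_k\rangle+a_{ik}=\langle\mu_i-a_{ik}\alpha,\rho_k\rangle$, and the operator of the tautological system is
\[
\sum_i\sum_{\mu_i\in\Delta_{\sL_i}}\langle\mu_i-a_{ik}\alpha,\rho_k\rangle\,b_{\mu_i}\frac{\p}{\p b_{\mu_i+\alpha}},
\]
not the one you wrote. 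The difference between the two is $(s-1)\sum_{i,\mu_i}b_{\mu_i}\,\p/\p b_{\mu_i+\alpha}$, which is nonzero for $s\geq 2$, so your operator neither lies in $\tau_Y(G,\sL_1,\cdots,\sL_s,\beta_\chi)$ nor annihilates $\Pi_\gamma$.

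The source of the error is the factor $(\star_\lambda)$: it arises from the transformation of the volume form $dm$ and therefore enters the integrand exactly once, whereas attaching it to each monomial of each $\sigma_i$, as you do, counts it $s$ times. The correct bookkeeping is the product rule: differentiating $\frac{1}{\sigma_1\cdots\sigma_s}\,dm_\lambda$ at $\lambda=0$ produces $s$ terms with coefficients $-\langle\mu_i,\rho_k\rangle$ plus a \emph{single} term $\langle\alpha,\rho_k\rangle t^\alpha$ from $dm_\lambda$, and the latter must be redistributed among the $s$ factors with weights $a_{ik}$ using $\sum_i a_{ik}=1$ --- exactly the normalization you correctly invoke for the torus operators but omit here. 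Your suggested alternative via the Cayley trick is plausible but is not carried out, so it cannot substitute for the missing computation.
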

\begin{proof}
Here, $V_i=H^0(Y, \sL_i)^*$ and $V=V_1\times\cdots\times V_s$. The period integral of the complete intersections is given by
\[
\Pi_\gamma=\int_{\gamma} \Res\frac{1}{\sigma_1\cdots \sigma_s}\bigwedge_{i=1}^N \frac{dt_i}{t_i},~N=\dim\hat{P}(k,n)
\]
and $\sigma_i\in H^0(Y, \sL_i)$. In terms of torus coordinates, we write $\sigma_i=\sum_{\mu_i\in\Delta_{\sL_i}} b_{\mu_i} t^{\mu_i}$. Starting from the torus actions, we consider the action $g_j(\lambda)$: $t_j\mapsto \lambda t_j$, $t_i\mapsto t_i$ for $i\neq j$ and associate it with a period integral:
\[
\Pi_\gamma= \int_{g_j(\lambda)_*\tau(\gamma)} g_j(\lambda)^* \left(\frac{1}{\sigma_1\cdots \sigma_s}\right)\bigwedge_{i=1}^N \frac{dt_i}{t_i}=\int_{\tau(\gamma)} g_j(\lambda)^* \left(\frac{1}{\sigma_1\cdots \sigma_s}\right)\bigwedge_{i=1}^N \frac{dt_i}{t_i},
\]
where the later equality holds if $\lambda$ is close to $1$. Now applying the operator $\lambda(\p /\p \lambda)$ and taking $\lambda=1$, we have 
\[
0=\int_\gamma \frac{(-\sum_i \mu_{ij}\sigma_i\prod_{k\neq i}\sigma_k)}{(\sigma_1\cdots \sigma_s)^2}\bigwedge_{i=1}^N \frac{dt_i}{t_i}=\left(\sum_{i,\mu_i\in\Delta_{\sL_i}} \langle \mu_i,e_j\rangle b_{\mu_i}\frac{\p}{\p b_{\mu_i}}\right) \Pi_\gamma.
\]
Clearly, they should also be annihilated by the Euler operators
\[
\left(\sum_{\mu_i\in\Delta_{\sL_i}} b_{\mu_i} \frac{\p}{\p b_{\mu_i}} +1\right)\Pi_\gamma =0,~i=1,\cdots,s.
\]

Let $x_1\in\bbC^r\subset \fg$, say $x_1=(1,0,\cdots,0)$. The homogenization of $\sigma_i$ is given by $\sum_{\mu_i} b_{\mu_i} \prod_{j=1}^r w_j^{\langle\mu_i,\rho_j \rangle+a_{ij}}$. Similar to the proof in Theorem \ref{general on Y}, we have
\[
Z(x_1)+\beta(x_1)=\sum_{i,\mu_i\in\Delta_{\sL_i}} \langle\mu_i,\rho_1 \rangle b_{\mu_i} \frac{\p}{\p b_{\mu_i}} + \sum_{i,\mu_i\in\Delta_{\sL_i}} a_{i1} b_{\mu_i} \frac{\p}{\p b_{\mu_i}}+1
\]
and thus
\[
(Z(x_1)+\beta(x_1))\Pi_\gamma = \left(\sum_{i,\mu_i\in\Delta_{\sL_i}} \langle\mu_i,\rho_1 \rangle b_{\mu_i} \frac{\p}{\p b_{\mu_i}}\right)\Pi_\gamma + \left(\sum_{i,\mu_i\in\Delta_{\sL_i}} a_{i1} b_{\mu_i} \frac{\p}{\p b_{\mu_i}}+1\right)\Pi_\gamma=0,
\]
since $\sum_{i} a_{i1}=1$ by choice. Also, 
$
(Z(x_j)+\beta(x_j))\Pi_\gamma=0,~j=1,\cdots,r.
$

For a root $\alpha$, we just notice that the one-parameter subgroup, denoted by $g_\alpha(\lambda)$, is given by $t^\mu\mapsto t^\mu(1+\lambda t^\alpha)^{\langle \mu,\rho_\alpha\rangle}$ and thus
$
\sigma_i\mapsto \sum_{\mu_i\in\Delta_{\sL_i}} b_{\mu_i} t^{\mu_i}(1+\lambda t^\alpha)^{\langle\mu_i\rho_\alpha\rangle}.
$
Let
\[
dm_\lambda:=\bigwedge_{i=1}^N \left(\frac{dt_i}{t_i}+\lambda\langle e_i,\rho_\alpha\rangle\frac{1}{1+\lambda t^\alpha}\left(\sum_{j=1}^N \alpha_j t^\alpha\cdot\frac{dt_j}{t_j}\right)\right).
\]
Then 
\[
\left.\frac{\p}{\p\lambda}dm_\lambda\right|_{\lambda=0}=\sum_{i=1}^N \alpha_i\langle e_i,\rho_\alpha\rangle t^\alpha dm=\langle \alpha,\rho_\alpha\rangle t^\alpha dm,~dm=\bigwedge_{i=1}^N \frac{dt_i}{t_i}.
\]
Also, by the product rule,
\[
\left.\frac{\p}{\p\lambda}g_\alpha(\lambda)^*\left(\frac{1}{\sigma_1\cdots\sigma_s}\right)\right|_{\lambda=0}=\frac{-\sum_i (\sum_{\mu_i\in\Delta_{\sL_i}} b_{\mu_i} \langle\mu_i, \rho_\alpha\rangle t^{\mu_i+\alpha})\prod_{k\neq i}\sigma_k}{(\sigma_1\cdots\sigma_s)^2}.
\]
Suppose $\rho_\alpha=\rho_k$. Similar to (\ref{pf1}), we have
\[
\left(\sum_i\sum_{\mu_i\in\Delta_{\sL_i}} \langle \mu_i-a_{ik}\alpha,\rho_k\rangle b_{\mu_i}\frac{\p}{\p b_{\mu_i+\alpha}}\right)\Pi_\gamma=0.
\]
Note that here we also use the fact $\sum_{i} a_{ik}=1$. Indeed,
\[
\int_{\tau(\gamma)}\left(\frac{-\sum_i (\sum_{\mu_i\in\Delta_{\sL_i}} b_{\mu_i} \langle\mu_i, \rho_k\rangle t^{\mu_i+\alpha})\prod_{k\neq i}\sigma_k}{(\sigma_1\cdots\sigma_s)^2}+\sum_i a_{ik} \frac{\langle\alpha, \rho_k\rangle t^{\alpha}\prod_{k}\sigma_k}{(\sigma_1\cdots\sigma_s)^2}\right)dm=0
\]
The integrand is simplified to
\begin{align*}
&-\frac{\sum_i\left(\sum_{\mu_i} b_{\mu_i}\langle \mu_i,\rho_k\rangle t^{\mu_i+\alpha} \prod_{k\neq i} \sigma_k-a_{ik}(\prod_{k} \sigma_k) \langle\alpha,\rho_k\rangle t^\alpha\right)}{(\sigma_1\cdots\sigma_s)^2}\\
&=-\frac{\sum_i (\prod_{k\neq i}\sigma_k) \left(\sum_{\mu_i} b_{\mu_i}\langle \mu_i,\rho_k\rangle t^{\mu_i+\alpha}-a_{ik}\langle\alpha,\rho_k\rangle t^\alpha\sigma_i\right)}{(\sigma_1\cdots\sigma_s)^2}\\
&=-\frac{\sum_i (\prod_{k\neq i}\sigma_k) \left(\sum_{\mu_i} (b_{\mu_i}\langle \mu_i,\rho_k\rangle t^{\mu_i+\alpha}-a_{ik}b_{\mu_i}\langle\alpha,\rho_k\rangle t^{\alpha+\mu_i})\right)}{(\sigma_1\cdots\sigma_s)^2}\\
&=\left(\sum_i\sum_{\mu_i\in\Delta_{\sL_i}} \langle \mu_i-a_{ik}\alpha,\rho_k\rangle b_{\mu_i}\frac{\p}{\p b_{\mu_i+\alpha}}\right)\Pi_\gamma
\end{align*}
and the operator 
\[
\sum_i\sum_{\mu_i\in\Delta_{\sL_i}} \langle \mu_i-a_{ik}\alpha,\rho_k\rangle b_{\mu_i}\frac{\p}{\p b_{\mu_i+\alpha}}
\]
is exactly the one generated by the root $\alpha$ action on $V$.

It is easy to check that the polynomial operators also kill $\Pi_\gamma$. 
\end{proof}

\section{Tautological system on $G(2,4)$ $\Leftrightarrow$ Extended GKZ system on $\hat{P}(2,4)$}

Both on $G(2,4)$ and $\hat{P}(2,4)$, we have the similar ways to construct their tautological systems, so we are able to study the degeneration of tautological systems under the conifold transition on $G(2,4)$ now.
\subsection{Tautological system on $\hat{P}(2,4)$ for further embedding}
Recall that there is no canonical choice of bases for $H^0(G(k,n), -K_{G(k,n)})$. By using Borel-Weil theorem, we can get a natural injection 
\[
H^0(G(k,n),-K_{G(k,n)})^*\hookrightarrow H^0(\mathbb{P}^{{n \choose k}-1}, \sO(n))^*
\]
and thus we use the later space as the parameter space to construct the tautological system 
via the change of variable formula (\cite{LY2013}) (See Example \ref{TS-g24} for the simplest case $G(2,4)$).

To compare the tautological systems on $X(:=G(2, 4))$ and $Y(:=\hat{P}(2, 4))$, we have to use the good transformation $\Phi^t : H^0(Y, -K_Y)^* \rightarrow H^0(\mathbb{P}^4, \sO(4))^*$, which was introduced in Section \ref{coordinates}.
Note that it falls to be an $\aut^0(Y)$-module homomorphism because $\sL$ is not $\aut^0(Y)$-linearizable. However it is $\aut_g(S)$-equivariant, so we prefer to use $\aut_g(S)$ to construct its tautological system as in Section \ref{general TS}. 

Together with the fact that $\sL$ is $\aut_g(S)$-linearizable, we are able to write down the generators for the tautological system $\tau_Y(\aut_g(S), -K_Y, \beta_\chi)$ on $Y$.
\begin{thm}\label{tauY}
Let $\oZ$ be the composite Lie algebra homomorphism 
$$
\oZ:\lie(\aut_g(S))\mathop{\longrightarrow}^Z {\rm End}(H^0(Y, -K_Y)^*) \longrightarrow {\rm End}(H^0(\mathbb{P}^5,\sO(4)))^*).
$$
The tautological system $\tau_Y(\aut_g(S), -K_Y, \beta_\chi)$ is equivalent to a system generated by the following operators:
\begin{itemize}
\item[1.] $\oZ(x)+\beta_\chi(x)$, $x\in \lie(\aut_g(S))$.
\item[2.] $\p_\zeta$, where $\zeta\in (H^0(Y, -K_Y)^*)^\perp\subset H^0(\mathbb{P}^5,\sO(4))$.
\item[3.] $\p_{\zeta_u}\p_{\zeta_v}-\p_{\zeta_p}\p_{\zeta_q}$, where $u,v,p,q\in\mathcal{E}$ and $u+v=p+q$.
\end{itemize}
Here $\mathcal{E}=\{(i_0,i_1,i_2,i_3,i_4,i_5)\in\mathbb{Z}^6_{\geq 0}:\sum_{k=0}^5 i_k=4\}$
\end{thm}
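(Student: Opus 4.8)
The plan is to transfer the defining data of $\tau_Y(\aut_g(S), -K_Y, \beta_\chi)$ from the parameter space $H^0(Y,-K_Y)^*$ to the larger space $H^0(\mathbb{P}^5,\sO(4))^*$ along the injection $\Phi^t$ dual to the surjection $\Phi:H^0(\mathbb{P}^5,\sO(4))\twoheadrightarrow H^0(Y,-K_Y)$ of Section \ref{coordinates}, and to recognize the resulting system as the ``change of variable'' form of a tautological system exactly as in \cite{LY2013} and Example \ref{TS-g24}. The key observation making this legitimate is that $\Phi$ is $\aut_g(S)$-equivariant (Proposition \ref{linearization} together with the discussion preceding Lemma \ref{lem}), so $\Phi^t$ intertwines the two $\aut_g(S)$-actions; this is what lets the operators $Z(x)+\beta_\chi(x)$ on $V^*=H^0(Y,-K_Y)^*$ be pulled back to the operators $\oZ(x)+\beta_\chi(x)$ on $H^0(\mathbb{P}^5,\sO(4))^*$, which is precisely item (1).

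First I would recall the general change-of-variable mechanism for tautological systems: given a $G$-equivariant surjection $W\twoheadrightarrow V$ of $G$-modules with $V=H^0(X,L)^*{}^\vee$, one may present $\tau_X(G,L,\beta)$ on $W^*$ by adding the first-order operators $\partial_\zeta$ for $\zeta$ in the kernel of $W\to V$ (equivalently $\zeta\in(V^\vee)^\perp$), replacing the polynomial operators $p(\partial_\zeta)$ for $p\in I(\hat X,V)$ by the pullbacks $p(\partial_\zeta)$ for $p$ in the ideal $I(\hat X, W)$ of the image in $\mathbb{P}W$, and keeping the symmetry operators $Z(x)+\beta(x)$ in their form on $W^*$. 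This is the same device used to pass from the intrinsic $\tau_{G(2,4)}$ to the concrete generators in Example \ref{TS-g24}. Applying it with $G=\aut_g(S)$, $W=H^0(\mathbb{P}^5,\sO(4))$ and $V^\vee=H^0(Y,-K_Y)$ yields items (1), (2), (3): item (2) is the kernel $(H^0(Y,-K_Y)^*)^\perp$, and item (3) records that the ideal of the image of the composite $Y\to\mathbb{P}H^0(Y,\sL)^*=\mathbb{P}^5\hookrightarrow\mathbb{P}H^0(\mathbb{P}^5,\sO(4))^*$ under the Veronese-type $\sO(4)$-map is, modulo the degree-one relations in item (2), generated by the binomials $z^uz^v-z^pz^q$ with $u+v=p+q$ — the standard quadratic relations of a Veronese.

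The remaining point is that this change of variables genuinely produces $\tau_Y(\aut_g(S),-K_Y,\beta_\chi)$ and not a larger or smaller $\sD$-module; here I would invoke Theorem \ref{general on Y}, which already identifies $\tau_Y(\aut_g(S),-K_Y,\beta_\chi)$ with the extended GKZ system on $Y$, and Lemma \ref{lem}, whose normality of $\Delta_\sL$ guarantees that the multiplication map $H^0(Y,\sL)^{\otimes 4}\to H^0(Y,-K_Y)$ is surjective so that the embedding by $-K_Y$ really does factor through the $\sO(4)$-Veronese of the $\sL$-embedding into $\mathbb{P}^5$. Combining: the ideal $I(\hat Y, H^0(\mathbb{P}^5,\sO(4)))$ is generated by the linear forms in $(H^0(Y,-K_Y)^*)^\perp$ together with the Veronese quadrics in item (3), and the symmetry operators transport to $\oZ(x)+\beta_\chi(x)$; this gives the stated generating set.

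I expect the main obstacle to be the bookkeeping in step relating the two ideals: one must check carefully that the binomials $z^uz^v-z^pz^q$ with $u+v=p+q$, \emph{together with} the linear forms of item (2), cut out exactly $I(\hat Y,H^0(\mathbb{P}^5,\sO(4)))$ — i.e. that no further quadratic (or higher) relations are needed beyond those coming from the Pl\"ucker-type relation defining $\overline X\subset\mathbb{P}^5$ (which lies in the span of item (2) after applying $\Phi$) and the intrinsic Veronese relations. This is where the normality of $\Delta_\sL$ from Lemma \ref{lem} does the real work, and where one should be most careful that the degeneration $t\to 0$ has not destroyed any relation; everything else (equivariance of $\Phi^t$, the explicit form of $\oZ$, the identification of $\beta_\chi$) is formal or already established in Sections \ref{coordinates}, \ref{symmetry} and \ref{general TS}.
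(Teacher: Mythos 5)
Your proposal follows essentially the same route as the paper's proof: both transfer the system from $H^0(Y,-K_Y)^*$ to $H^0(\mathbb{P}^5,\sO(4))^*$ via the $\aut_g(S)$-equivariant map and the change-of-variable formula of \cite{LY2013}, identifying item (2) with the kernel and item (3) with the Veronese binomials. The one point you flag as the main obstacle --- that the linear forms together with the Veronese binomials generate the full ideal $I_W$ --- is settled in the paper by a direct appeal to Lemma 9.4 of \cite{LY2013}, so no further work is needed there.
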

\begin{proof}
Let $V=H^0(Y,-K_Y)^*$ and $W=H^0(\mathbb{P}^5,\sO(4)))^*$.  Recall the maps 
$$Y \mathop{\longrightarrow}^{\psi_\sL} \mathbb{P}(V) \mathop{\longrightarrow}^{\mathbb{P}(\Phi^t)} \mathbb{P}(W).$$ 
Let $\hat{Y}$ be the cone over the image of $Y$ under the map $\psi_\sL$ and $I_V:=I(\hat{Y}, V) \subset \mathbb{C}[V]$. Also,  let $\tilde{Y}$ be the cone over the image of $Y$ under the map $\mathbb{P}(\Phi^t)\circ\psi_\sL$ and $I_W:=I(\tilde{Y}, W) \subset \mathbb{C}[W]$.
For the given $\beta_\chi$, together with two canonical representations $\aut_g(S)\to \aut(V)$ and $\aut_g(S) \to \aut(W)$, we can construct two associated tautological systems on $Y$. By the change of variable formula in \cite{LY2013}, these two systems are equivalent to each other.

The system $\tau_Y(\aut_g(S), -K_Y, \beta_\chi)$ is equal to the first one and we have to replace the polynomial operators in the second one with the operators in type 2 and 3, that is, 
by the following commutative diagram
\[
\xymatrix{
&Y \ar[r] &\overline{X} \ar@{^{(}->}[r]^{\psi_\sL} \ar@{^{(}->}[dr]_-{\psi_{-K_{\overline{X}}}} & \mathbb{P}^5 \ar@{^{(}->}[r]^{\text{4-Uple}} &\mathbb{P}^{125},\\
&   &   &\mathbb{P}^N \ar@{^{(}->}[ur]_-{\mathbb{P}(\Phi^t)}
}
\]
we have to compare the polynomial operators for $Y$ in $\mathbb{P}^{125}$ with $Y$ in $\mathbb{P}^N$. The map $W\hookrightarrow V$ gives the isomorphism (Note that $Y \to\overline{X}$ is surjective.)
\[
(\bbC[\overline{X}]\cong)~\bbC[V]/I_V\cong\bbC[W]/I_W
\]
and thus the ideal $I_W=I(\overline{X},W)$. By Lemma 9.4 in \cite{LY2013}, $I(\overline{X},W)$ is generated by the Veronese binomials and the space of linear forms vanishing on $\overline{X}$ in $\mathbb{P}(W)$. Hence the polynomial operators with respect to $I_W$ is generated by
\begin{itemize}
\item[1.] $\p_\zeta$, where $\zeta\in W^\perp\subset V^*$.
\item[2.] $\p_{\zeta_u}\p_{\zeta_v}-\p_{\zeta_p}\p_{\zeta_q}$, where $u,v,p,q\in\mathcal{E}$ and $u+v=p+q$.
\end{itemize}
\end{proof}

\subsection{Main theorems}
 We can reconstruct the tautological system on $X$ from the extended GKZ system on $Y$.
\begin{thm}\label{main1}
The tautological system on $X$ degenerates, as a $\sD$-module, to the tautological (sub-)system on $Y$ under the coordinate transformation $\Phi^t$.
\end{thm}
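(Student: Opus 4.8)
The plan is to establish the degeneration fiberwise in terms of the three types of generators of the tautological system $\tau_X$ on $X = G(2,4)$ described in Example \ref{TS-g24}, and to match each with the corresponding generator of $\tau_Y(\aut_g(S), -K_Y, \beta_\chi)$ in the form given in Theorem \ref{tauY}, using the coordinate transformation $\Phi^t$ of Section \ref{coordinates}. The point is that both systems live, after the change-of-variable formula of \cite{LY2013}, inside $D$-modules on the \emph{same} space $H^0(\mathbb{P}^5, \sO(4))^*$, so ``degeneration as a $\sD$-module'' means: the family of ideals $\tau_{X_t}$ over the degeneration parameter $t$ (from \eqref{degcoor}) has a well-defined limit at $t = 0$, and that limit is exactly $\tau_Y$ pulled back via $\Phi^t$.

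First I would handle the polynomial (type 2 and type 3) operators. These do not involve $t$ at all: the type 3 Veronese binomials $\p_{\zeta_u}\p_{\zeta_v} - \p_{\zeta_p}\p_{\zeta_q}$ are literally identical on both sides, and the type 2 operators $\p_\zeta$ with $\zeta \in Q_4$ (resp. $\zeta \in (H^0(Y,-K_Y)^*)^\perp$) must be compared. Here the key input is that the Pl\"ucker quadric $z_{14}z_{23} - z_{13}z_{24} + t\, z_{12}z_{34}$ degenerates at $t = 0$ to the toric quadric $z_{14}z_{23} - z_{13}z_{24}$ cutting out $\overline{X} = P(2,4)$ in $\mathbb{P}^5$; consequently $Q_4$ (the degree-$4$ part of the Pl\"ucker ideal) degenerates flatly to $I(\overline{X}, W)_4$, which by Lemma 9.4 of \cite{LY2013} is exactly the span of the linear forms vanishing on $\overline{X}$ together with the Veronese binomials. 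So at $t=0$ the type 2 operators become precisely the ones listed in Theorem \ref{tauY}, modulo the type 3 operators. This step is essentially the flatness of the degeneration at the level of homogeneous ideals and should be routine.

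Next I would handle the first-order (type 1) operators $Z(x) + \beta(x)$, $x \in \fsl_4 \times \bbC$. The bulk of this is already done in Section \ref{symmetry}: for each of the standard generators $E_{ij}$ and the diagonal Cartan elements, the explicit computations there show that, after applying $\Phi^t$ and letting $t \to 0$, the operator $\oZ(x) + \beta(x)$ becomes either a torus operator or a root operator $Z_\alpha$ of the extended GKZ system on $Y$, with the dictionary tabulated in \S\ref{table}. For most generators ($E_{12}$, say, in \S2.2.2) there is no $t$ at all and the identification is exact; the only subtlety is $E_{14}$ and similar ``type (B)'' generators (\S2.2.3), where the $SL_4$-action on the total space of the degeneration must be rescaled by the factor $t$ to preserve each fiber $X_t$, and the operator picks up an explicit $t$; one then checks that the $t \to 0$ limit exists and lands on the correct $Z_\alpha$. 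Combining with Theorem \ref{general on Y} (which identifies $\tau_Y(\aut_g(S), -K_Y, \beta_\chi)$ with the full extended GKZ system on $Y$), this shows the limits of the type 1 operators of $\tau_X$ generate the first-order part of $\tau_Y$.

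The main obstacle I anticipate is not any single computation but the \emph{surjectivity} direction: one must argue that the $14$ roots' worth of $Z_\alpha$'s plus the torus operators that arise as limits from $\fsl_4$ actually generate \emph{all} of the extended GKZ first-order operators on $Y$, i.e.\ that nothing in $\tau_Y$ is missed. By the Remark following \S\ref{table}, the two roots $\alpha_1 = (1,1,1,1)$ and $\alpha_2 = (-1,-1,-1,-1)$ do \emph{not} appear in the table — these are precisely the root automorphisms that fail to preserve the fibers of $\psi: Y \to \overline{X}$ (Corollary \ref{root}). So strictly speaking the limit of $\tau_X$ is the \emph{sub}-system of the extended GKZ system generated by the remaining $12$ roots together with the torus, and the theorem must be read with ``tautological (sub-)system'' in exactly that sense — which is consistent with the statement. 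Thus the final assembly is: the flat limit of the ideal $\tau_{X_t} \subset D_W$ as $t \to 0$ equals the ideal generated by (a) the type 2 and 3 polynomial operators of Theorem \ref{tauY}, and (b) the torus operators and the twelve $Z_\alpha$, $\alpha \in R(\Sigma,N) \setminus \{\alpha_1, \alpha_2\}$; and this is by definition the variant tautological (sub-)system on $Y$. The one place I would be most careful is checking that the $t$-adic limits in the type (B) cases are genuinely $D$-module degenerations (no denominators blow up, the limiting ideal is still $D_W$-coherent of the right size), but given the explicit formulas in \S2.2.3 this is a finite check.
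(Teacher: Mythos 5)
Your proposal is correct and follows essentially the same route as the paper: the type~\textbf{2} operators are matched by letting the Pl\"ucker quadric $z_{14}z_{23}-z_{13}z_{24}+tz_{12}z_{34}$ degenerate at $t=0$ to the toric quadric, the type~\textbf{3} operators coincide verbatim, and the type~\textbf{1} operators are handled by the explicit $\fsl_4$-to-root/torus computations of Section~\ref{symmetry} together with the table in \S\ref{table}, with the two roots $(1,1,1,1)$ and $(-1,-1,-1,-1)$ absent from the limit, which is exactly why the target is the \emph{sub}-system. Your additional remarks on flatness of the ideal degeneration and on the Euler operator (cf.\ Remark~\ref{Euler}) are consistent with, and slightly more explicit than, the paper's own argument.
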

\begin{proof}
Let's recall these two systems.
For $X$, it is generated by
\begin{itemize}
\item[\bf 1.] $Z(x)+\beta(x)$, $x\in \fsl_4\times\mathbb{C}$.
\item[\bf 2.] $\p_\zeta$, where $\zeta\in (H^0(-K_X)^*)^\perp\subset H^0(\mathbb{P}^5,\sO(4))$.
\item[\bf 3.] $\p_{\zeta_u}\p_{\zeta_v}-\p_{\zeta_p}\p_{\zeta_q}$, where $u,v,p,q\in\mathcal{E}$ and $u+v=p+q$.
\end{itemize}
and for $Y$, it is generated by
\begin{itemize}
\item[\bf 1.] $Z(x)+\beta_\chi(x)$, $x\in \lie(\aut_g(S))$.
\item[\bf 2.] $\p_\zeta$, where $\zeta\in (H^0(-K_Y)^*)^\perp\subset H^0(\mathbb{P}^5,\sO(4))$.
\item[\bf 3.] $\p_{\zeta_u}\p_{\zeta_v}-\p_{\zeta_p}\p_{\zeta_q}$, where $u,v,p,q\in\mathcal{E}$ and $u+v=p+q$.
\end{itemize}
The degeneration is given by the coordinates (\ref{degcoor}) and thus, over $t$, the fiber $X_t$ has its own tautological system with the operators of type 2 being given by 
\[
\zeta=(z_{14}z_{23}-z_{13}z_{24}+tz_{12}z_{34})\cdot(\text{degree two polynomials}),
\]
which is clearly equal to the type {\bf 2} operator on $Y$ when $t\to 0$. Type {\bf 3} operators are the same. 

The Euler operator (induced by scaling) in type {\bf 1} for $X$ corresponds to the type {\bf 1} operator given by $H$ for $Y$ (also see Remark \ref{Euler}). Finally, the symmetry operators for the type {\bf 1} have been done in Section \ref{symmetry} and summarized to the table in Section \ref{table}. 
Hence we complete the proof.
\end{proof}

\begin{thm}\label{main2}
The tautological system on $X$ is completely determined by the tautological system on $Y$ (= the extended GKZ system on $Y$). Indeed, it is determined by the open part $Y^\circ:=Y-Z$, where $Z$ is the exceptional locus of the resolution $Y\to\overline{X}$.
\end{thm}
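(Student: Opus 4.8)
The plan is to read Theorem~\ref{main2} off the explicit generator lists already assembled, using Corollary~\ref{root} to recognize \emph{which} generators of the extended GKZ system on $Y$ are intrinsic to the open part $Y^{\circ}=Y-Z$.

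First I would recall the precise form of the degeneration from Theorem~\ref{main1}. Under $\Phi^{t}$, letting $t\to 0$, the tautological system on $X$ becomes the $\sD$-module on $H^{0}(\mathbb{P}^{5},\sO(4))^{*}$ generated by the polynomial operators of types \textbf{2} and \textbf{3} of Theorem~\ref{tauY}, the torus and Euler operators coming from the maximal torus of $\aut_{g}(S)$ (cf.\ Remark~\ref{Euler}), and the root operators $Z_{\alpha}$ for $\alpha$ running over the $12$ roots listed in the table of Section~\ref{table}, i.e.\ over $R(\Sigma,N)\setminus\{\alpha_{1},\alpha_{2}\}$ with $\alpha_{1}=(1,1,1,1)$ and $\alpha_{2}=(-1,-1,-1,-1)$. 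By Theorem~\ref{general on Y} this is precisely the extended GKZ system on $Y$ \emph{with the two operators $Z_{\alpha_{1}}$, $Z_{\alpha_{2}}$ deleted}. So the content of Theorem~\ref{main2} splits into: (a) this deletion can be specified canonically from the geometry of $Y^{\circ}$; and (b) the $t\to 0$ limit can be promoted back to $\tau_{X}$ using only data attached to $\psi\colon Y\to\overline{X}$.

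For (a) I would invoke Corollary~\ref{root}: $\alpha_{1}$ and $\alpha_{2}$ are exactly the roots whose one-parameter automorphisms of $Y$ do not leave the fibres of $\psi$ invariant. By the remark following Corollary~\ref{root}, this means that for $\alpha\neq\alpha_{1},\alpha_{2}$ the operator $Z_{\alpha}$ annihilates not only the absolute periods but also the relative periods $\int_{\gamma}\Res(\Omega/f)$ over $\gamma\in H_{3}(Y_{f},Z\cap Y_{f})$, whereas $Z_{\alpha_{1}}$, $Z_{\alpha_{2}}$ do not (they move the exceptional curves $Z\cap Y_{f}$); the torus and polynomial operators always kill these relative periods. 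Hence the subsystem $\tau_{Y}^{\mathrm{sub}}$ obtained by deleting $Z_{\alpha_{1}}$, $Z_{\alpha_{2}}$ is intrinsically characterised as the system annihilating the relative periods of $(Y_{f},Z\cap Y_{f})$ — the ``$B$-model of $Y^{\circ}$'' — and by Theorem~\ref{main1} it coincides with the $t\to 0$ degeneration of $\tau_{X}$; this description involves $Z$ only through its complement $Y^{\circ}$. For (b), the change of variables $\Phi^{t}$ is the combinatorial map $\Phi$ of Section~\ref{coordinates} between $\Delta_{\sL}\cap M$ and $\Delta\cap M$, toric data visible on $Y^{\circ}$; the only genuinely new input over $\tau_{Y}^{\mathrm{sub}}$ is the deformation term $tz_{12}z_{34}$ in (\ref{degcoor}), the conifold smoothing of $\overline{X}$ along its singular stratum $\overline{X}\setminus\psi(Y^{\circ})$, and this — together with the $SL_{4}$-action on the family $\mathfrak{X}$ — is again pinned down by $\psi$, hence by $Y^{\circ}$. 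Reading the degeneration backwards then reconstructs $\tau_{X}$.

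I expect the main obstacle to be step (b), the ``rigidity'' point: one must verify that $\Phi^{t}$, the family $\mathfrak{X}$ of (\ref{degcoor}), and its $SL_{4}$-linearisation are determined by $\psi\colon Y\to\overline{X}$ with no recourse to $Z$ itself; I would phrase this via the rigidity remarks announced for Section~4. A secondary point, needed to keep Theorem~\ref{main2} from being vacuous, is to check that $Z_{\alpha_{1}}$, $Z_{\alpha_{2}}$ do \emph{not} already lie in the $\sD$-submodule generated by the remaining operators, i.e.\ that $\tau_{Y}^{\mathrm{sub}}\subsetneq\tau_{Y}$; this is precisely the discrepancy between the solution space of the extended GKZ system and the period integrals, measured here by $H^{4}(X,\bbC)_{prim}\cong\bbC$ as in Remark~\ref{SofTS}.
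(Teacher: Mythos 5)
Your step (a) matches the paper: the two roots $\alpha_{1}=(1,1,1,1)$ and $\alpha_{2}=(-1,-1,-1,-1)$ are discarded exactly on the grounds of Corollary \ref{root}, and your insertion of the term $tz_{12}z_{34}$ into the type \textbf{2} operators is also how the paper treats the polynomial part. The gap is in step (b), which is where the actual content of the theorem lives and which you yourself flag as the main obstacle but then resolve by appealing to ``the $SL_{4}$-action on the family $\mathfrak{X}$'' being ``pinned down by $\psi$.'' That is circular: the $SL_{4}$-linearization of the family is precisely what has to be \emph{produced} from the data on $Y^{\circ}$, not assumed. The paper does this constructively. For each of the twelve remaining roots it extends the corresponding one-parameter automorphism of $Y$ to the general fiber $X_{t}$ by hand: a semi-simple root (one whose negative is also a root) extends verbatim, while a non-semi-simple root such as $(0,0,1,1)$, acting by $z_{34}\mapsto z_{34}-\lambda z_{14}$, must be corrected to $z_{34}\mapsto z_{34}-\lambda z_{14}$, $z_{23}\mapsto z_{23}+t\lambda z_{12}$ so as to preserve the fiber equation $z_{14}z_{23}-z_{13}z_{24}+tz_{12}z_{34}=0$. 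Your proposal never explains how the root operators of $Y$ acquire their $t$-dependent corrections, so ``reading the degeneration backwards'' is not yet an argument.

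A second, related omission: the torus part. The big torus of $Y$ is four-dimensional while the Cartan of $\fsl_{4}$ is three-dimensional, so one cannot simply carry over ``the torus and Euler operators coming from the maximal torus of $\aut_{g}(S)$'' as you propose. The paper instead recovers the correct three-dimensional abelian subalgebra as the abelian part of $[\fk,\fk]$, generated by Lie brackets of the semi-simple root operators, and then checks the count $3+4+4+4=15=\dim\fsl_{4}$ to see that the reconstructed operators generate the full symmetry algebra of $X$ at $t=1$. Without this bracket argument and the dimension count, you have not shown that the reconstruction actually yields $\tau_{X}$ rather than a proper subsystem. (Your ``secondary point'' about $\tau_{Y}^{\mathrm{sub}}\subsetneq\tau_{Y}$ is a sensible sanity check but is not part of the paper's proof and is not needed for the statement.)
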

\begin{proof}
We are given the tautological system on $Y$, which is generated by
\begin{itemize}
\item[\bf 1.] $Z(x)+\beta_\chi(x)$, $x\in \lie(\aut_g(S))$.
\item[\bf 2.] $\p_\zeta$, where $\zeta\in (H^0(-K_Y)^*)^\perp\subset H^0(\mathbb{P}^5,\sO(4))$.
\item[\bf 3.] $\p_{\zeta_u}\p_{\zeta_v}-\p_{\zeta_p}\p_{\zeta_q}$, where $u,v,p,q\in\mathcal{E}$ and $u+v=p+q$.
\end{itemize}
For the operators in \textbf{1}, we first remove the operators generated by the roots $(1,1,1,1)$ and $(-1,-1,-1,-1)$, since these operators ``move" the extremal rays in the contraction as we said in Corollary \ref{root}. For the other roots, we are going to extend them to the automorphisms of the general fiber $X_t$. 

If the root $\alpha$ is semi-simple, i.e., $-\alpha$ is also a root, then there is nothing to do since the action can be extended into the general fiber directly. 

Suppose we are given a non semi-simple root automorphism $(0,0,1,1)$. It acts on $S$ via
\[
w_6\mapsto w_6-\lambda w_2w_5,~~ w_i\mapsto w_i,~~i\neq 6.
\]
Using the identification between $z_{ij}$'s and $w_i$'s, this corresponds to 
\[
z_{34}\mapsto z_{34}-\lambda z_{14},~~z_{ij}\mapsto z_{ij},~~\text{for other $i,j$}.
\]
To extend this action to general fiber, we consider the equation of the general fiber
\[
z_{14}z_{23}-z_{13}z_{24}+tz_{12}z_{34}=0.
\]
We can extend this action into the general fiber via
\[
z_{34}\mapsto z_{34}-\lambda z_{14},~~ z_{23}\mapsto z_{23}+t\lambda z_{12},~~ z_{ij}\mapsto z_{ij},~~\text{for other $i,j$}.
\]
Repeat the same process to get all the extensions of the roots other than $(1,1,1,1)$ and $(-1,-1,-1,-1)$.

To recover the torus symmetry on $X$, we consider the abelian part of the Lie bracket $[\fk,\fk]$ of $\fk:=\Lie(\aut_g(S))$, which is a $3$-dimensional vector space over $\mathbb{C}$. It's generated by $(-1,0,2,-1)$, $(1,-1,1,1)$ and $(1,0,0,1)$ in torus coordinates. We remark that these are generated by the Lie brackets of the semi-simple roots in $\fk$.

Hence we obtain $3+4+4+4=15$ symmetries for general fiber. These generate the full symmetry operators for $X$ when we set $t=1$.

For the operators in \textbf{2}, we just put the term $tz_{12}z_{34}$. For example, if $\zeta\in (H^0(-K_Y)^*)^\perp\subset H^0(\mathbb{P}^5,\sO(4))$ is such a linear form, then $\zeta=(z_{14}z_{23}-z_{13}z_{24})\times (\text{degree two terms})$. We just put
\[
\zeta_t=(z_{14}z_{23}-z_{13}z_{24}+tz_{12}z_{34})\times (\text{degree two terms}).
\]
Then $\zeta_1$ gives the type \textbf{2} operator for $X$.

The type \textbf{3} operators are the same and thus there is nothing to do. 
\end{proof}

\begin{remark}
Let $\overline{X}_{f_0}$ be a Calabi-Yau threefold with only four ODPs as before and $U:=\overline{X}_{f_0}-\sing(\overline{X}_{f_0})$. The (infinitesimal) deformations are classified by $H^1(U,\Theta_U)$ \cite{F1989}. The corresponding resolution is $Y_{f_0}$. Put $V:=Y_{f_0}-Z\cap Y_{f_0}$. We have $H^1(U,\Theta_U)\cong H^1(V,\Theta_V)$. Therefore, $R(\Sigma,N)-\{\alpha_1,\alpha_2\}$ should be regarded as an \textit{embedded symmetry} on $V$ or a Gauss-Manin connection on \textit{open varieties}.

On the other hand, the ``dual" of $H^1(V,\Theta_V)$ is $H_3(Y_{f_0},Z\cap Y_{f_0})$ and 
\[
\dim H_3(Y_{f_0},Z\cap Y_{f_0})-\dim H_3(Y_{f_0})=3,
\]
so one would expect the {\it relative periods} are the coordinates of the moduli space around $[\overline{X}_{f_0}]$. This is exactly how we establish the correspondence. Moreover, the three missing operators could be viewed as the ``monodromy operators" in the sense of variation of Hodge structures near the point $[X_{f_0}]$ on the moduli space. The original periods are invariant under the monodromy actions.
\end{remark}

\subsection{Remarks on rigidity}
We would like to show that the Calabi-Yau hypersurfaces in $X$ and $Y$ are deformation complete, i.e., all deformations can be realized as the ones in $X$ and $Y$. Hence we have the full information of $B$ model on hand.

\subsubsection{The moduli of CY hypersurfaces in $Y$}
For a smooth Calabi-Yau hypersurface $Y_f$ in $Y$ with $f\in H^0(Y, -K_Y)$, we claim that the natural map
\begin{eqnarray}\label{map1}
H^0(Y_f, \sN_{Y_f/Y}) \to H^1(Y_f,T_{Y_f})
\end{eqnarray}
is surjective. From the canonical splitting (\cite{B1994},\cite{M2005})
\[
H^1(Y_f,T_{Y_f})=H_{\text{poly}}^1(Y_f,T_{Y_f})\oplus H_{\text{non-poly}}^1(Y_f,T_{Y_f}),
\]
whose polynomial part is identified with the image of (\ref{map1}),  it is sufficient to show that $H_{\text{non-poly}}^1(Y_f,T_{Y_f})=0$. Indeed, the dimension of the non-polynomial part is given by
\[
\dim H_{\text{non-poly}}^1(Y_f,T_{Y_f})=\sum_{\Theta} l^*(\Theta)l^*(\Theta'),
\]
where the summation is taken over all codimenson two faces $\Theta$ in $\Delta:=\Delta_{-K_Y}$. Here, $\Theta'$ is the corresponding dual face in $\Delta'$ which is the dual polytope of $\Delta$ and is equal to the convex hull of $\{\delta(e_1),\cdots,\delta(e_6)\}$ by construction. And, $l^*(\Theta)$ is the number of relative interior lattice points in $\Theta$. 
In our case, $\Theta'$ is a one-dimensional face on $\Delta'$. It contains at least two points in $\{\delta(e_i)\}$, say $\delta(e_k)$, $\delta(e_l)$. Let $H$ be the supporting hyperplane of $\Theta'$. If $\delta(e_k)$ and $\delta(e_l)$ form the left-right corner of some  box in the ladder diagram, 

\unitlength=0.28mm

\begin{picture}(10,120)(-100,0)
\tiny

\qbezier[4000](20,30)(20,60)(20,90)
\qbezier[4000](20,30)(50,30)(80,30)
\qbezier[4000](80,30)(80,60)(80,90)
\qbezier[4000](20,90)(50,90)(80,90)

\put(18,28){\scriptsize $\bullet$}
\put(18,88){\scriptsize $\bullet$}
\put(78,28){\scriptsize $\bullet$}
\put(78,88){\scriptsize $\bullet$}

\put(-10,60){$\delta(e_k)$}
\put(40,100){$\delta(e_p)$}
\put(40,40){$\delta(e_l)$}
\put(86,60){$\delta(e_q)$}

\put(8,90){\scriptsize $A$}
\put(8,30){\scriptsize $B$}
\put(78,20){\scriptsize $D$}
\put(78,95){\scriptsize $C$}
\end{picture}\\
then we have $\delta(e_k)+\delta(e_l)=\delta(e_p)+\delta(e_q)$. Thus $H$ is zero on $\delta(e_s)$ for $s=k,l,p,q$ and it is not a supporting hyperplane. This leads to a contradiction, so $\delta(e_k)$ and $\delta(e_l)$ are contained in a smooth cone in $\Sigma'$. There are no integral points other than $\delta(e_k)$ and $\delta(e_l)$ on $\Theta'$. Hence we finish the proof of the claim.

\subsubsection{The moduli of CY hypersurfaces in $X$}
Recall that $X=SL_4/P$, where $P$ is a parabolic subgroup, From Bott's theorem \cite{B1957}, $H^q(X,T_X)=0$ for $q\geq 1$. For a smooth hypersurface $i:X_f\hookrightarrow X$, we have the normal bundle exact sequence
\[
0\to T_{X_f}\to i^*T_X \to \sN_{X_f/X}\to 0.
\]
Since $X_f$ is an anti-canonical hypersurface and thus $\sI_{X_f}\cong \sO(K_X)$, by Serre duality and Hodge theory, $H^2(X, \sI_{X_f}\otimes T_X)\cong H^2(X,\Omega_X)^*=0$. By the exact sequence
\[
0\to \sI_{X_f}\to \sO_X \to i_*\sO_{X_f}\to 0
\] 
and $i_*i^*T_X\cong T_X\otimes i_*\sO_{X_f}$, we get that $H^1(X_f,i^*T_X)=H^1(X,i_*i^*T_X)=0$. Hence the connecting homomorphism
\[
H^0(X_f,\sN_{X_f/X})\to H^1(X_f,T_{X_f})
\]
is surjective. 

\bibliographystyle{plain}

\begin{thebibliography}{10}

\bibitem{A1994}
A.~Adolphson.
\newblock Hypergeometric functions and rings generated by monomials.
\newblock {\em Duke Math. J.}, 73(2):269--290, 1994.

\bibitem{B1994}
V.~V. Batryev.
\newblock {Dual polyhedra and mirror symmetry for Calabi--Yau hypersurfaces in
  toric varieties }.
\newblock {\em Journal of Algebraic Geometry}, 3:493--545, 1994.

\bibitem{BB1994}
V.~V. Batryev and L.~Borisov.
\newblock {On Calabi-Yau complete intersections in toric varieties}.
\newblock In {\em Higher-dimensional complex varieties (Trento, 1994)}, pages
  39--65. de Gruyter, Berlin, 1996.

\bibitem{BCKS1998}
V.~V. Batryev, I.~Ciocan-Fontanine, B.~Kim, and D.~V. Straten.
\newblock {Conifold transitions and mirror symmetry for Calabi-Yau complete
  intersections in Grassmannians}.
\newblock {\em Nuclear Physics B}, 514(3):640--666, March 1998.

\bibitem{BCKS2000}
V.~V. Batryev, I.~Ciocan-Fontanine, B.~Kim, and D.~V. Straten.
\newblock Mirror symmetry and toric degenerations of partial flag manifolds.
\newblock {\em Acta Mathematica}, 184(1):1--39, 2000.


\bibitem{B1997}
A.~Bertram.
\newblock Quantum schubert calculus.
\newblock {\em Advanced in Mathematics}, 128:289--305, 1997.

\bibitem{BHLSY2014}
S.~Bloch, A.~Huang, B.~Lian, V.~Srinivas, and S.-T. Yau.
\newblock On the holonomic rank problem.
\newblock {\em Journal of Differential Geometry}, 97:11--35, 2014.

\bibitem{B1957}
R.~Bott.
\newblock Homogeneous vector bundles.
\newblock {\em The Annals of Mathematics}, 66(2):203--248, 1957.

\bibitem{C1995}
David~A. Cox.
\newblock The homogeneous coordinate ring of a toric variety.
\newblock {\em Journal of Algebraic Geometry}, 4(1):17--50, 1995.

\bibitem{C2014}
David~A. Cox.
\newblock Erratum to ``the homogeneous coordinate ring of a toric variety''.
\newblock {\em Journal of Algebraic Geometry}, 23(2):393--398, 2014.

\bibitem{F1989}
R.~Friedman.
\newblock On threefolds with trivial canonical bundle.
\newblock In {\em Complex geometry and Lie theory}, volume~53 of {\em
  Proceedings of Symposia in Pure Mathematics}, pages 103--134. American
  Mathematical Society, 1989.

\bibitem{GKZ}
I. M. Gelfand, M. M. Kapranov and A. V. Zelevinsky.
\newblock{ Hypergeometric funcntions and toric varieties}. 
\newblock{\em (Russian) Funktsional. Anal. i Prilozhen.} 23 (1989), no. 2, 12--26; translation in {\em Funct. Anal. Appl.} 23 (1989), no. 2, 94--106.

\bibitem{GL1997}
N.~Gonciulea and V.~Lakshmibai.
\newblock Schubert varieties, toric varieties, and ladder determinantal
  varieties.
\newblock {\em Ann. Inst. Fourier (Grenoble)}, 47:1013--1064, 1997.

\bibitem{GP1990}
B.R. Greene and M.R. Plesser.
\newblock {Duality in Calabi-Yau moduli space}.
\newblock {\em Nuclear Physics B}, 338:15--37, 1990.

\bibitem{HKTY1995}
S.~Hosono, A.~Klemm, S.~Theisen, and S.-T. Yau.
\newblock {Mirror symmetry, mirror map and applications to Calabi-Yau
 hypersurfaces}.
\newblock {\em Communications in Mathematical Physics}, 167(2):301--350, 1995.

\bibitem{HLY1996}
S.~Hosono, B.~Lian, and S.-T. Yau.
\newblock {GKZ-generalized hypergeometric systems in mirror symmetry of
 Calabi-Yau hypersurfaces}.
\newblock {\em Communications in Mathematical Physics}, 182(3):535--577, 1996.

\bibitem{HLYZ2015}
A.~Huang, B.~Lian, S.-T. Yau, and X.~Zhu.
\newblock Chain integral solutions to tautological systems.
\newblock {\em arXiv:1508.00406v2}, 2015.

\bibitem{HLZ2015}
A.~Huang, B.~Lian, and X.~Zhu.
\newblock {Period integrals and the Riemann-Hilbert correspondence}.
\newblock {\em Inventiones Mathematicae}, 2014.

\bibitem{LLW}
Y.-P. Lee, H.-W. Lin and C.-L. Wang.
 \newblock {$A + B$ theory in conifold transitions for Calabi-Yau threefolds}.
 \newblock {\em arXiv:1502.03277},  2015.
 
\bibitem{LSY2013}
B.~Lian, R.~Song, and S.-T. Yau.
\newblock Periodic integrals and tautological systems.
\newblock {\em Jurnal of the European Mathematical Society}, 14(4):1457--1483,
  2013.

\bibitem{LY2013}
B.~Lian and S.-T. Yau.
\newblock {Period integrals of CY and general type complete intersections}.
\newblock {\em Inventiones mathematicae}, 191(1):35--89, January 2013.

\bibitem{M2005}
A.~R. Mavlyutov.
\newblock Embedding of Calabi-Yau deformations into toric varieties.
\newblock {\em Mathematische Annalen}, 333(1):45--65, 2005.

\end{thebibliography}

\end{document}